\def\op{\mathrm{op}}
\def\genfd{\mathbf{k}}
\def\gg{\mathfrak{g}}
\def\CC{\mathcal{C}}
\def\OO{\mathcal{O}}
\def\Derc{\operatorname{Derc}}
\def\End{\operatorname{End}}
\def\Ker{\operatorname{ker}}
\def\hx{\hat{x}}
\def\hy{\hat{y}}
\def\ad{\operatorname{ad}}
\def\id{\mathrm{id}}
 \newtheorem{Theorem}{Theorem}[section]
\newtheorem{Corollary}[Theorem]{Corollary}
\newtheorem{Lemma}[Theorem]{Lemma}
\newtheorem{Proposition}[Theorem]{Proposition}
 { \theoremstyle{definition}
\newtheorem{Definition}[Theorem]{Definition}
\newtheorem{Example}[Theorem]{Example}
\newtheorem{Remark}[Theorem]{Remark} }
\numberwithin{equation}{section}
\begin{document}
\allowdisplaybreaks

\newcommand{\arXivNumber}{1605.01376}

\renewcommand{\PaperNumber}{026}

\FirstPageHeading

\ShortArticleName{Hopf Algebroid Twists for Deformation Quantization of Linear Poisson Structures}
\ArticleName{Hopf Algebroid Twists for Deformation Quantization\\ of Linear Poisson Structures}

\Author{Stjepan MELJANAC~$^\dag$ and Zoran \v{S}KODA~$^{\ddag\S}$}
\AuthorNameForHeading{S.~Meljanac and Z.~\v{S}koda}

\Address{$^\dag$~Theoretical Physics Division, Institute Rudjer Bo\v{s}kovi\'{c},\\
\hphantom{$^\dag$}~Bijeni\v{c}ka cesta~54, P.O.~Box 180, HR-10002 Zagreb, Croatia}
\EmailD{\href{mailto:meljanac@irb.hr}{meljanac@irb.hr}}
\Address{$^\ddag$~Faculty of Science, University of Hradec Kr\'{a}lov\'{e},\\
\hphantom{$^\ddag$}~Rokitansk\'{e}ho 62, Hradec Kr\'{a}lov\'{e}, Czech Republic}
\EmailD{\href{mailto:zoran.skoda@uhk.cz}{zoran.skoda@uhk.cz}}
\Address{$^\S$~University of Zadar, Department of Teachers' Education,\\
\hphantom{$^\S$}~Franje Tudjmana 24, 23000 Zadar, Croatia}

\ArticleDates{Received May 24, 2017, in final form March 13, 2018; Published online March 25, 2018}

\Abstract{In our earlier article~[\textit{Lett. Math. Phys.} \textbf{107} (2017), 475--503], we explicitly described a topological Hopf algebroid playing the role of the noncommutative phase space of Lie algebra type. Ping Xu has shown that every deformation quantization leads to a Drinfeld twist of the associative bialgebroid of $h$-adic series of differential operators on a fixed Poisson manifold. In the case of linear Poisson structures, the twisted bialgebroid essentially coincides with our construction. Using our explicit description of the Hopf algebroid, we compute the corresponding Drinfeld twist explicitly as a product
of two exponential expressions.}

\Keywords{deformation quantization; Hopf algebroid; noncommutative phase space; Drinfeld twist; linear Poisson structure}
\Classification{53D55; 16S30; 16T05}

\section{Introduction}\label{section1}

Given a possibly noncommutative associative $\mathbf{k}$-algebra $A$, a (left) $A$-bialgebroid $H$ is an associative algebra with an additional $A$-bimodule structure ${}_A H_A$, coproduct $\Delta \colon H\to H\otimes_A H$ and counit $\epsilon \colon H\to A$ generalizing appropriately the structures of a $\genfd$-vector space, coproduct and counit in the definition of a $\genfd$-bialgebra. We refer to $A$ as the base algebra and $H$ as the total algebra of the bialgebroid. For details see~\cite{bohmHbk,BrzMilitaru} and Section~\ref{sec:bialgebroids}. Lu~\cite{lu} introduced a class of $A$-bialgebroids meant to be noncommutative analogues of transformation groupoids, hence a novel symmetry useful in geometry and mathematical physics. These bialgebroids are smash products of a Hopf algebra and a braided commutative algebra in its category of Yetter--Drinfeld modules and nowadays they are often called scalar extension bialgebroids. A particular case is the smash product of a Hopf algebra and its dual (with its canonical Yetter--Drinfeld module structure); this smash product is known under the name of Heisenberg double and may require some completions in the infinite dimensional case. Our present focus is on the Lie algebra noncommutative phase spaces whose Hopf algebroid structure is presented in detail in~\cite{halg}, crudely shown to be a Heisenberg double of the universal enveloping algebra~$U(\gg)$ in~\cite{heisd}. An improved variant of this Heisenberg double and an entire class of generalizations are categorically treated in~\cite{stojicPhD}, namely the Heisenberg doubles (and more generally, scalar extension Hopf algebroids) in the internalized context of the symmetric monoidal category of (countably cofinite strict) filtered-cofiltered vector spaces, see~\cite{stojicPhD}. They are shown to be internal Hopf algebroids in the sense of B\"ohm~\cite{BohmInternal}.
It is compelling to interpret the Heisenberg double $H_\gg$ of the universal enveloping algebra $U(\gg)$ of a Lie algebra $\gg$ as a noncommutative phase space of Lie algebra type where the universal enveloping algebra $U(\gg)$ is interpreted as its coordinate sector (configuration space) and its (topological) Hopf dual as the momentum sector. In the case of $\kappa$-deformed Minkowski space, one can extend the Hopf algebroid adding full $\kappa$-deformed Poincar\'e symmetry into the Heisenberg double~\cite{JLZSMWPLB,JLZSMW}. The physical discussion of the coproduct for the momentum sector has been studied in many references including~\cite{Amelino,HallidaySzabo}.

Based on a work of Xu~\cite{xu}, this example can also be understood as a deformation quantization of the phase space with the corresponding linear Poisson structure. Xu defines an analogue of Drinfeld twist $\mathcal{F}\in H\otimes_A H$ twisting the coproduct $\Delta$ to a new coproduct $\Delta_{\mathcal{F}}$, by a recipe $\Delta_{\mathcal{F}}(h)=\mathcal{F}^{-1}\Delta(h)\mathcal{F}$, for all $h\in H$, where however the Hopf algebra $H$ from Drinfeld's theory is replaced by an $A$-bialgebroid $H$. On the other hand, if we consider the formal power series in one variable $t$ with coefficients in the Poisson algebra of functions, then the deformation quantization gives a recipe for an associative star product $f\star g$ which is a formal power series in $t$ whose zero term is the usual product of functions and the first correction gives the Poisson bracket. One usually restricts to the case when there exist a series $\mathcal{F}$ in $t$ whose coefficients are bidifferential operators such that the star product is of the form $f\star g = (\mu\circ \mathcal{F})(f\otimes g)$. Xu shows that such formal bidifferential operator $\mathcal{F}$ is in fact a twist for the topological bialgebroid whose total algebra consists of formal power series with coefficients in the original Poisson algebra of functions.

Excluding the bialgebroid twists induced by bialgebra twists~\cite{borpacholtwistedoid}, very few examples where both the deformation twist and the twisted bialgebroid structures are given by explicit formulas are known. The linear Poisson structures are clearly among the most important classes to study and here we find two explicit expressions~$\mathcal{F}_l$ and~$\mathcal{F}_r$ giving the same twist for the left bialgebroid with completed Heisenberg--Weyl algebra $\hat{A}_n$ to $H_\gg$ as the total algebra. Though $\mathcal{F}_l=\mathcal{F}_r$ as elements in $H_\gg\otimes_{U(\gg)} H_\gg$ the corresponding formulas come by projecting from two different elements $\tilde{\mathcal{F}}_l$, $\tilde{\mathcal{F}}_r$ in the usual tensor square $H_\gg\otimes H_\gg$. In~\cite{scopr} we exhibited another formula $\mathcal{F}_c$ for this twist which is however far less explicit and involves a series where each term involves both the original and twisted coproduct, see~\eqref{eq:Fc}.

Our main motivation is to find explicit examples of Xu's construction, and is reinforced by the recent interest in field theories on Lie algebra type noncommutative spaces, including $\kappa$-deformations.

A Hopf algebroid is defined in Section~\ref{sec:antipode} as a left bialgebroid with an antihomomorphism $S\colon H\to H$ called the antipode and satisfying some axioms. Unlike for the bialgebra twists, it is not known in general if the bialgebroid twists of Hopf algebroids are sufficient to formulate a~general recipe for twisting the antipode as well. In our case, both the bialgebroid before and the bialgebroid after the twist have an antipode. We give a conjectural formula $S(h) = V_{\mathcal{F}}^{-1}S_0(h)V_{\mathcal{F}}$ for the new antipode $S$ in terms of old antipode $S_0$ along with a partial argument for it where $V_{\mathcal{F}} = \mu(S_0\otimes\id)(\mathcal{F})$. We can show that $S(h) = V^{-1}S_0(h)V$ for some $V$, but we do not have the complete proof that $V_{\mathcal{F}} = V$. Both can be expressed as exponentials in formal power series of only the momentum variables $\partial^\mu$, starting with free term $1$ and agreeing in the first few terms. Note that Xu~\cite{xu} says Hopf algebroid for the notion
which does not involve an antipode and which is in~\cite{BrzMilitaru} shown to be equivalent to the left bialgebroid of other authors.

The representation of $H_\gg$ via a concrete twist, besides its conceptual appeal, is useful to twist systematically many other constructions (for example, basic constructions in differential geometry) from the undeformed Heisenberg--Weyl algebra case to the case of phase space of Lie algebra type. This is an additional tool for physical applications, e.g., development of field theories on the noncommutative spaces of Lie algebra type. Such applications are under investigation.

To orient the reader within the subject, we note that some other physically important examples of Hopf algebroids are built from the data of weak Hopf algebras~\cite{bohmHbk} (as those coming from the symmetries in low dimensional QFTs~\cite{mackschomerus}) and the study of the
dynamical quantum Yang--Baxter equation~\cite{doninmudrov,xu}.

{\bf Conventions.} In this article, all algebras are over a field $\genfd$ of characteristic $0$, and the unadorned tensor product $\otimes$ is over the ground field (in the deformation quantization and quantum gravity examples, the field of real or complex numbers). We freely use Sweedler
notation $\Delta(h) = \sum h_{(1)}\otimes h_{(2)}$ for the coproducts with or without explicit summation sign. If~$A$ is an algebra, $A^\op$ denotes the algebra with opposite multiplication. We use the Einstein summation convention.

\section{Bialgebroids}\label{sec:bialgebroids}

In this section, we define bialgebroids and some auxiliary constructions.
\begin{Definition}[\cite{bohmHbk,BrzMilitaru,lu}]
Given an associative algebra $(A,\mu_A)$, which is in this context called the {\it base algebra}, a {\it left $A$-bialgebroid} $(H,\mu,\alpha,\beta,\Delta,\epsilon)$ consists of
\begin{itemize}\itemsep=0pt
\item an associative algebra $(H,\mu)$;
\item two algebra maps, the {\it source map} $\alpha\colon A\to H$ and the {\it target map} $\beta\colon A^\op\to H$ such that $[\alpha(a),\beta(a')] = 0$ for all $a,a'\in A$; assume in the following that on $H$ we fix the $A$-bimodule
structure given by $a.h.a' = \alpha(a)\beta(a')h$, $a,a'\in A$, $h\in H$;

\item $A$-bimodule map $\Delta \colon H\to H\otimes_A H$, called {\it coproduct}, satisfying the coassociativity
\begin{gather*}
(\Delta\otimes_A \id_H)\circ\Delta = (\id_H\otimes_A \Delta)\circ\Delta;
\end{gather*}
\item $A$-bimodule map $\epsilon \colon H\to A$, called {\it counit}, satisfying $\alpha(\epsilon(h_{(1)}))h_{(2)} = h = \beta(\epsilon(h_{(2)}))h_{(1)}$.
\end{itemize}
The following compatibilities are required for these data:

(i) Formula $\sum_\lambda h_\lambda\otimes f_\lambda \mapsto \epsilon(\sum_\lambda h_\lambda\alpha(f_\lambda))$ defines an action $\blacktriangleright\colon H\otimes A\to A$ which extends the left regular action $A\otimes A\to A$ along the inclusion $A\otimes A\stackrel{\alpha\otimes A}\longrightarrow H\otimes A$.

(ii) The $A$-subbimodule $H\times_A H\subset H\otimes_A H$ (called the Takeuchi product~\cite{bohmHbk}) defined by
\begin{gather*}
H\times_A H = \left\lbrace\sum_i b_i\otimes b'_i \in H\otimes_A H \,|\,
\sum_i b_i\otimes b'_i \alpha(a) = \sum_i b_i \beta(a)\otimes b'_i, \ \forall\, a\in A \right\rbrace
\end{gather*}
contains the image of $\Delta$ and the corestriction $\Delta|\colon H\to H\times_A H$ is an algebra map with respect to the factorwise multiplication.
\end{Definition}

To see the meaning of~(ii), notice that, unlike for the Takeuchi product $H\times_A H$, the factorwise multiplication on $H\otimes H$ does not factor to a well defined map on the vector space $H\otimes_A H$ in general. Accordingly, it does not make sense to say that $\Delta\colon H\to H\otimes_A H$ is an algebra map and the multiplicativity of $\Delta$ should be interpreted differently, say within $H\times_A H$. Indeed, the kernel
\begin{gather}\label{eq:IA}
I_A = \operatorname{Ker} (H\otimes H\to H\otimes_A H )
\end{gather}
of the canonical projection of the $A$-bimodules is the right ideal in the algebra $H\otimes H$ generated by $\beta(a)\otimes 1 - 1\otimes\alpha(a)$ for all $a\in A$, but this kernel is not a two-sided ideal in general.

The compatibility (ii) in the definition of a bialgebroid holds iff the map $H\otimes(H\otimes_A H)\to H$, $(g,\sum_i h_i\otimes k_i)\mapsto\Delta(g)(\sum_i h_i\otimes k_i)$ (multiplied in each tensor factor) is well defined (does not depend on the choice of the sum within
the equivalence class in $H\otimes_A H = H\otimes H/I_A$). In other words, $\Delta(g)\cdot I_A\subset I_A$ for all $g\in H$. This last characterization will be useful in the proof of Proposition~\ref{prop:FDFm}.

\begin{Lemma}\label{lem:prodbtr}
The composition
\begin{gather}\label{eq:prodbtrprim}
(H\otimes H)\otimes (A\otimes A) \stackrel\cong\longrightarrow (H\otimes A) \otimes (H\otimes A)\stackrel{\blacktriangleright\otimes\blacktriangleright}\longrightarrow A\otimes A\stackrel{\mu_A}\longrightarrow A
\end{gather}
factors down along the projection $(H\otimes H)\otimes(A\otimes A) \to (H\otimes_A H)\otimes(A\otimes A)$ to a $\mathbf{k}$-linear map
\begin{gather*}
\mu_A\circ(\blacktriangleright\otimes\blacktriangleright)\colon \ (H\otimes_A H)\otimes (A\otimes A)\longrightarrow A.
\end{gather*}
By similar factorization, we get analogous maps from the multiple tensor products $(H\otimes_A\cdots\otimes_A H)\otimes(A\otimes\cdots\otimes A)\to A$. For elements we extend the operation syntax $h\blacktriangleright a := \blacktriangleright (h\otimes a)$ to products $\blacktriangleright\otimes\blacktriangleright$ (and multiple analogues), e.g., if $\mathcal{F}\in H\otimes_A H$ and $a,b\in A$ we write
\begin{gather*}\mu_A \mathcal{F}(\blacktriangleright\otimes\blacktriangleright)(a\otimes b):= (\mu_A\circ(\blacktriangleright\otimes\blacktriangleright))(\mathcal{F}
\otimes (a\otimes b)).\end{gather*}
\end{Lemma}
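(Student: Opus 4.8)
The plan is to show that the map \eqref{eq:prodbtrprim} annihilates the kernel of the projection $(H\otimes H)\otimes(A\otimes A)\to (H\otimes_A H)\otimes(A\otimes A)$, since a $\mathbf{k}$-linear map out of a quotient is the same as a $\mathbf{k}$-linear map out of the total space that vanishes on the relevant subspace. Concretely, by \eqref{eq:IA} the kernel of $H\otimes H\to H\otimes_A H$ is the right ideal $I_A$ generated by the elements $\beta(a)\otimes 1 - 1\otimes\alpha(a)$, $a\in A$; tensoring with $A\otimes A$ over $\mathbf{k}$ is exact, so the kernel of the displayed projection is $I_A\otimes(A\otimes A)$. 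Hence it suffices to check that for every generator of $I_A$, every $h,h'\in H$ used to right-multiply it, and all $a,b,c\in A$, one has
\begin{gather*}
\mu_A\bigl(\blacktriangleright\otimes\blacktriangleright\bigr)\Bigl(\bigl((\beta(a)\otimes 1 - 1\otimes\alpha(a))(h\otimes h')\bigr)\otimes(b\otimes c)\Bigr) = 0.
\end{gather*}

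First I would unwind what this says using the explicit description of $\blacktriangleright$ from compatibility (i): $h\blacktriangleright b = \epsilon(h\,\alpha(b))$, and in particular $\alpha(a)h\blacktriangleright b = \epsilon(\alpha(a)h\,\alpha(b)) = a\cdot(h\blacktriangleright b)$ — here using that $\epsilon$ is a left $A$-module map and that $\blacktriangleright$ extends the left regular action along $\alpha\otimes A$ — and similarly I would compute $\beta(a)h\blacktriangleright b$. The key identity I expect to need is a compatibility of $\blacktriangleright$ with the target map of the shape $\beta(a)h\blacktriangleright b = h\blacktriangleright(ba)$ or equivalently $\epsilon(\beta(a)g) = \epsilon(g)\,a$ read off from the counit axiom $\beta(\epsilon(h_{(2)}))h_{(1)} = h$ together with the $A$-bimodule structure $a.h.a' = \alpha(a)\beta(a')h$; this lets me move a $\beta(a)$ on the first leg past $\blacktriangleright$ into an $A$-action, which then, under $\mu_A$, can be transferred to the second leg exactly canceling the $\alpha(a)$ coming from the second generator term. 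So on the generator $\beta(a)\otimes 1 - 1\otimes\alpha(a)$ applied to $b\otimes c$ we get $(h\blacktriangleright b)\cdot a \cdot (h'\blacktriangleright c)$ from the first summand (after passing $\beta(a)$ through) and $(h\blacktriangleright b)\cdot a\cdot(h'\blacktriangleright c)$ from the second summand (since $1\blacktriangleright$ precomposed with $\alpha(a)$ on $h'$ gives $h'\blacktriangleright(ca)$… up to the same bookkeeping), and these cancel. The right-multiplication by $h\otimes h'$ is then harmless because it just reparametrizes which pair of elements of $H$ we feed in. Finally, the multi-tensor version follows by the same argument applied to each consecutive pair of tensor slots, i.e.\ by iterating the two-factor case, since the kernel of $H^{\otimes n}\to H\otimes_A\cdots\otimes_A H$ is the sum of the ideals coming from each adjacent-pair relation.

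The main obstacle is purely bookkeeping: getting the handedness of the $A$-actions right. The subtlety is that $\blacktriangleright$ is defined via $\alpha$ on the $A$-slot but the generators of $I_A$ mix $\beta$ on the first $H$-leg with $\alpha$ on the second, so I must carefully use both the "$\epsilon$ is an $A$-bimodule map" property and the precise form of the $A$-bimodule structure on $H$ to convert a $\beta$-contribution on one leg into an $\alpha$-compatible $A$-action that $\mu_A$ can shuttle to the other leg. I anticipate needing the auxiliary identity $\epsilon(h\beta(a)) = \epsilon(h)a$ (or $\epsilon(\beta(a)h)=\epsilon(h)a$, depending on conventions), which is standard for left bialgebroids and follows from the counit axioms; I would state and verify it as a one-line sublemma before the main computation. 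Once the signs and sides are pinned down, the cancellation is immediate and the factorization through the quotient is automatic.
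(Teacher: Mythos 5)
Your proposal is correct and follows essentially the same route as the paper: reduce to vanishing on $I_A\otimes(A\otimes A)$, test on the generators $(\beta(c)\otimes 1-1\otimes\alpha(c))(h\otimes h')$, unwind $\blacktriangleright$ via $\epsilon(h\alpha(a))$, and use that $\epsilon$ is an $A$-bimodule map so both summands yield $\epsilon(h\alpha(a))\cdot c\cdot\epsilon(h'\alpha(b))$. The only bookkeeping to fix is that the auxiliary identities should read $(\beta(c)h)\blacktriangleright a=(h\blacktriangleright a)c$ and $(\alpha(c)h')\blacktriangleright b=c\,(h'\blacktriangleright b)$ (multiplication in $A$ on the appropriate side), not $h\blacktriangleright(ac)$; with that correction the cancellation is exactly the paper's.
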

\begin{proof} We need to show that the composition~(\ref{eq:prodbtrprim}) gives zero on the subspace $I_A\otimes(A\otimes A)$. We show this in terms of the generators. For $a,b,c\in A$, $h,h'\in H$ we need
\begin{gather*}
\mu_A (((\beta(c) h)\blacktriangleright a)\otimes (h'\blacktriangleright b)) = \mu_A ((h\blacktriangleright a)\otimes((\alpha(c)h')\blacktriangleright b)).
\end{gather*}
By the definition of $\blacktriangleright$ (see (i) above), it is hence enough to show
\begin{gather*}
\mu_A (\epsilon(\beta(c) h \alpha(a))\otimes\epsilon(h'\alpha(b)))= \mu_A (\epsilon(h\alpha(a))\otimes\epsilon(\alpha(c)h'\alpha(b))).
\end{gather*}
Since $\epsilon$ is a map of $A$-bimodules, both sides yield $\epsilon(h\alpha(a)) \cdot c\cdot \epsilon(h' \alpha(b))\in A$.
\end{proof}

\section{Twists for bialgebroids}\label{s:gentwist}

In this section, we explain how Ping Xu~\cite{xu} generalized the Drinfeld twists to bialgebroids (see also~\cite{doninmudrov}), give some examples and focus on the case relevant for deformation quantization. This involves formal completions and completed tensor products and somewhat different formal completions in the later part of the article. For that reason, in this section, we also include an extensive review of completions adapted to our formalism (see also Remark~\ref{rem:wcompl}). Unlike in some other publications, we here use Xu's convention for twists (elsewhere we use~$\mathcal{F}$ for his~$\mathcal{F}^{-1}$).

\begin{Definition}[\cite{xu}]
$\mathcal{F}\in H\otimes_A H$ is a Drinfeld twist for a left $A$-bialgebroid
$(H,\mu,\alpha,\beta,\Delta,\epsilon)$
if the 2-cocycle condition
\begin{gather}\label{eq:2coc}
[(\Delta\otimes_A \mathrm{id})(\mathcal{F})](\mathcal{F}\otimes_A 1)
=
[(\mathrm{id}\otimes_A\Delta)(\mathcal{F})](1\otimes_A\mathcal{F})
\end{gather}
and the counitality $(\epsilon\otimes_A\mathrm{id})(\mathcal{F}) = 1_H
= (\mathrm{id}\otimes_A\epsilon)(\mathcal{F})$ hold.
\end{Definition}

We use the Sweedler-like notation for the twist $\mathcal{F} = \mathcal{F}^{(1)}\otimes\mathcal{F}^{(2)}$ (upper labels, while we use lower for the coproduct) and the notation from Lemma~\ref{lem:prodbtr}.
\begin{Theorem}[\cite{xu}]\label{th:cocycimpltw}
If $H$ is a left $A$-bialgebroid and $\mathcal{F}\in H\otimes_A H$ a Drinfeld twist, then the formula
\begin{gather}\label{eq:starfromF}
a\star b = \mu\mathcal{F}(\blacktriangleright\otimes\blacktriangleright)(f\otimes g) = \big(\mathcal{F}^{(1)}\blacktriangleright a\big) \big(\mathcal{F}^{(2)}\blacktriangleright b\big)
\end{gather}
defines an associative $\mathbf{k}$-algebra $A_\star = (A,\star)$ structure on $A$ with the same unit; the formulas $\alpha_{\mathcal{F}}(a) = \alpha\big(\mathcal{F}^{(1)}\blacktriangleright a\big)\mathcal{F}^{(2)}$ and $\beta_{\mathcal{F}}(a) = \beta\big(\mathcal{F}^{(2)}\blacktriangleright a\big)\mathcal{F}^{(1)}$ define respectively an algebra homomorphism $\alpha_{\mathcal{F}}\colon A_\star\to H$ and antihomomorphism $\beta_{\mathcal{F}}\colon A_\star\to H$ with $[\alpha_{\mathcal{F}}(a),\beta_{\mathcal{F}}(a)] = 0$ for all $a\in A_\star$. Given the $A_\star$-bimodule structure on $H$ by $a.h.b := \alpha_{\mathcal{F}}(a)\beta_{\mathcal{F}}(b)h$ and denoting by $I_{A_\star}$ the kernel of the canonical projection $H\otimes H\to H\otimes_{A_\star} H$, there is an inclusion $\mathcal{F} \cdot I_{A_\star}\subset I_A \subset H\otimes H$ $($cf.~\eqref{eq:IA}$)$. In other words, $\mathcal{F}$ induces a linear map $\mathcal{F}^\sharp\colon H\otimes_{A_\star} H\to H\otimes_{A}H$ given by the left multiplication of classes in $H\otimes H$ by $\mathcal{F}$, that is by $\mathcal{F}^\sharp(a\otimes_{A_\star} b) = f^1 a\otimes_{A} f_1 b$. Suppose the map~$\mathcal{F}^\sharp$ is invertible. Then the formula
\begin{gather*}
\Delta_{\mathcal{F}}(h) = \mathcal{F}^{\sharp -1}\Delta(h)\mathcal{F}
\end{gather*}
defines a map $\Delta_{\mathcal{F}}\colon H\to H\otimes_{A_\star} H$ which is coassociative and counital with the same counit. Moreover, $H_{\mathcal{F}} = (H,\mu,\alpha_{\mathcal{F}},\beta_{\mathcal{F}},\Delta_{\mathcal{F}},\epsilon)$ is a~left $A_\star$-bialgebroid.
\end{Theorem}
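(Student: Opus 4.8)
Here is the approach I would take.

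The plan is to reduce every assertion to four inputs: the counitality and $2$-cocycle conditions on $\mathcal{F}$, the elementary identities for the counit and coproduct of a left bialgebroid, the well-definedness results of the type of Lemma~\ref{lem:prodbtr} (and their triple-tensor analogues), and the characterisation $\Delta(g)\cdot I_A\subseteq I_A$ of the Takeuchi condition recorded above. As preliminaries I would assemble the standard facts about the action $h\blacktriangleright a:=\epsilon(h\alpha(a))$: it is a left $H$-action extending the regular $A$-action, $h\blacktriangleright 1_A=\epsilon(h)$, and $A$ becomes a module algebra, $h\blacktriangleright(ab)=(h_{(1)}\blacktriangleright a)(h_{(2)}\blacktriangleright b)$; the last identity comes from $\Delta(h\alpha(a))=h_{(1)}\alpha(a)\otimes_A h_{(2)}$ (obtained from $\Delta$ being an algebra map into $H\times_A H$ together with $\Delta(\alpha(a))=\alpha(a)\otimes_A 1$) and the counit axiom. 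I would also isolate the two ``sliding'' identities $\alpha(h_{(1)}\blacktriangleright a)h_{(2)}=h\alpha(a)$ and $\beta(h_{(2)}\blacktriangleright a)h_{(1)}=h\beta(a)$, together with $(\alpha(c)h)\blacktriangleright a=c(h\blacktriangleright a)$ and $(\beta(c)h)\blacktriangleright a=(h\blacktriangleright a)c$; these are exactly what is needed to move source and target maps past coproduct legs and across the balanced tensor products.

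For $A_\star$: unitality is immediate, since applying $-\blacktriangleright a$ to the counitality relations $\alpha(\epsilon(\mathcal{F}^{(1)}))\mathcal{F}^{(2)}=1_H=\beta(\epsilon(\mathcal{F}^{(2)}))\mathcal{F}^{(1)}$ gives $1_A\star a=a=a\star 1_A$. For associativity I would expand $(a\star b)\star c$ using the module-algebra property and observe that it equals the value of the triple-tensor operation $\mu_A\circ(\blacktriangleright\otimes\blacktriangleright\otimes\blacktriangleright)$ of Lemma~\ref{lem:prodbtr} on $a\otimes b\otimes c$ paired with $[(\Delta\otimes_A\id)(\mathcal{F})](\mathcal{F}\otimes_A 1)\in H\otimes_A H\otimes_A H$, while $a\star(b\star c)$ is the same operation paired with $[(\id\otimes_A\Delta)(\mathcal{F})](1\otimes_A\mathcal{F})$; since that operation is well defined on $H\otimes_A H\otimes_A H$, the $2$-cocycle condition~\eqref{eq:2coc} gives associativity. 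The homomorphism property of $\alpha_{\mathcal{F}}$ follows by the same device: the assignment $(h_1,h_2,h_3)\mapsto\alpha(h_1\blacktriangleright a)\,\alpha(h_2\blacktriangleright b)\,h_3$ is well defined on $H\otimes_A H\otimes_A H$ (same check as in Lemma~\ref{lem:prodbtr}), it sends the left-hand side of~\eqref{eq:2coc} to $\alpha_{\mathcal{F}}(a\star b)$ after using the module-algebra property and multiplicativity of $\alpha$, and the right-hand side to $\alpha_{\mathcal{F}}(a)\alpha_{\mathcal{F}}(b)$ after using $\alpha(h_{(1)}\blacktriangleright c)h_{(2)}=h\alpha(c)$; unitality of $\alpha_{\mathcal{F}}$ is counitality of $\mathcal{F}$ once more. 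The anti-multiplicativity of $\beta_{\mathcal{F}}$ is obtained from the parallel computation with the operation $(h_1,h_2,h_3)\mapsto\beta\big((h_2\blacktriangleright a)(h_3\blacktriangleright b)\big)h_1$, now using that $\beta$ reverses products and the sliding identity $\beta(h_{(2)}\blacktriangleright c)h_{(1)}=h\beta(c)$; and $[\alpha_{\mathcal{F}}(a),\beta_{\mathcal{F}}(a')]=0$ follows by expanding $\alpha_{\mathcal{F}}(a)\beta_{\mathcal{F}}(a')$ and $\beta_{\mathcal{F}}(a')\alpha_{\mathcal{F}}(a)$ with the two sliding identities until only the relation $[\alpha,\beta]=0$ remains, the two sides being matched again through~\eqref{eq:2coc}.

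Next I would establish that $\mathcal{F}^\sharp$ is well defined. Since $I_A$ is a right ideal of $H\otimes H$ and $I_{A_\star}$ is the right ideal generated by $\beta_{\mathcal{F}}(a)\otimes 1-1\otimes\alpha_{\mathcal{F}}(a)$, the inclusion $\mathcal{F}\cdot I_{A_\star}\subseteq I_A$ reduces to the single identity in $H\otimes_A H$
\[
\mathcal{F}^{(1)}\beta_{\mathcal{F}}(a)\otimes_A\mathcal{F}^{(2)}=\mathcal{F}^{(1)}\otimes_A\mathcal{F}^{(2)}\alpha_{\mathcal{F}}(a),\qquad a\in A.
\]
Expanding the right-hand side with the definition of $\alpha_{\mathcal{F}}$, the identity $\mathcal{F}^{(2)}\alpha(c)=\alpha(\mathcal{F}^{(2)}_{(1)}\blacktriangleright c)\mathcal{F}^{(2)}_{(2)}$ and the balancing rule $h\otimes_A\alpha(c)h'=\beta(c)h\otimes_A h'$ rewrites it as the operation $(h_1,h_2,h_3)\mapsto\beta(h_2\blacktriangleright a)h_1\otimes_A h_3$ applied to $[(\id\otimes_A\Delta)(\mathcal{F})](1\otimes_A\mathcal{F})$; the corresponding manipulation of the left-hand side, via $\beta_{\mathcal{F}}$ and $\beta(h_{(2)}\blacktriangleright a)h_{(1)}=h\beta(a)$, rewrites it as the same operation applied to $[(\Delta\otimes_A\id)(\mathcal{F})](\mathcal{F}\otimes_A 1)$, and that operation is well defined on $H\otimes_A H\otimes_A H$, so the identity is once more~\eqref{eq:2coc}. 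I expect this step --- keeping straight which legs carry $\alpha$ versus $\beta$ and over which copy of the base each tensor is balanced --- to require the most delicate bookkeeping, and it is the computational heart of the argument. Granting that $\mathcal{F}^\sharp$ is invertible, the formula $\Delta_{\mathcal{F}}(h)=\mathcal{F}^{\sharp-1}(\Delta(h)\mathcal{F})$ then makes sense because $\Delta(h)\in H\times_A H$, equivalently $\Delta(h)\cdot I_A\subseteq I_A$, so $\Delta(h)\mathcal{F}$ is a well-defined element of $H\otimes_A H$.

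Finally, that $H_{\mathcal{F}}$ is a left $A_\star$-bialgebroid: the $A_\star$-bimodule properties of $\Delta_{\mathcal{F}}$ and $\epsilon$ follow from the formulas for $\alpha_{\mathcal{F}},\beta_{\mathcal{F}}$ together with $\epsilon\circ\alpha_{\mathcal{F}}=\epsilon\circ\beta_{\mathcal{F}}=\id_A$ (immediate from counitality of $\mathcal{F}$), and compatibility~(i) for $H_{\mathcal{F}}$ follows from these and $\alpha_{\mathcal{F}}$ being an algebra map. Counitality of $\Delta_{\mathcal{F}}$ with counit $\epsilon$ I would deduce from the counitality of $\mathcal{F}$: the left counit axiom uses the relation $(\epsilon\otimes_A\id)\circ\mathcal{F}^\sharp=(\epsilon\otimes_{A_\star}\id)$, giving $(\epsilon\otimes_{A_\star}\id)\Delta_{\mathcal{F}}(h)=(\epsilon\otimes_A\id)(\Delta(h)\mathcal{F})=h$ via the sliding identity and $\alpha(\epsilon(\mathcal{F}^{(1)}))\mathcal{F}^{(2)}=1_H$, and the right counit axiom is verified by the analogous computation with $\alpha,\beta$ and the two counitality relations interchanged. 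Coassociativity $(\Delta_{\mathcal{F}}\otimes_{A_\star}\id)\Delta_{\mathcal{F}}=(\id\otimes_{A_\star}\Delta_{\mathcal{F}})\Delta_{\mathcal{F}}$ is the classical Drinfeld-twist computation, carried out with the maps of type $\mathcal{F}^\sharp$ inserted to account for the change of base algebra: after expanding both composites and moving every copy of $\mathcal{F}$ to the right past the coassociative $\Delta$, the discrepancy between the two sides is precisely~\eqref{eq:2coc}. Lastly, the Takeuchi/multiplicativity condition~(ii) for $H_{\mathcal{F}}$ is the statement $\Delta_{\mathcal{F}}(g)\cdot I_{A_\star}\subseteq I_{A_\star}$; applying the injective map $\mathcal{F}^\sharp$ and using $\mathcal{F}^\sharp(\Delta_{\mathcal{F}}(g)\,\omega)=\Delta(g)\,\mathcal{F}^\sharp(\omega)$ reduces it to $\Delta(g)\cdot(\mathcal{F}\cdot I_{A_\star})\subseteq I_A$, which is $\mathcal{F}\cdot I_{A_\star}\subseteq I_A$ followed by $\Delta(g)\cdot I_A\subseteq I_A$.
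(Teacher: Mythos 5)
The paper does not actually prove Theorem~\ref{th:cocycimpltw}: it is imported from Xu's paper with a citation, and the only in-text fragment of an argument is Remark~\ref{rem:cocyccoass}, which records precisely your associativity step (both iterated star products are the triple-tensor operation of Lemma~\ref{lem:prodbtr} evaluated on the two sides of~\eqref{eq:2coc}). So there is no in-paper proof to compare against; judged on its own, your plan is sound and is essentially the standard argument (and essentially Xu's): each identity is exhibited as a single well-defined operation on a balanced triple tensor product applied to the two sides of the cocycle condition, with the sliding identities $\alpha(h_{(1)}\blacktriangleright a)h_{(2)}=h\alpha(a)$ and $\beta(h_{(2)}\blacktriangleright a)h_{(1)}=h\beta(a)$ doing the bookkeeping. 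I checked your central step, $\mathcal{F}\cdot I_{A_\star}\subseteq I_A$ via the operation $(h_1,h_2,h_3)\mapsto\beta(h_2\blacktriangleright a)h_1\otimes_A h_3$, and it is correct (including the well-definedness of that operation on $H\otimes_A H\otimes_A H$). Two points are under-argued and should be written out in a full version: first, axiom~(ii) for $H_{\mathcal{F}}$ requires the multiplicativity of $\Delta_{\mathcal{F}}$ in addition to $\Delta_{\mathcal{F}}(g)\cdot I_{A_\star}\subseteq I_{A_\star}$; it does follow from the injectivity of $\mathcal{F}^\sharp$ together with your intertwining relation $\mathcal{F}^\sharp(\Delta_{\mathcal{F}}(g)\,\omega)=\Delta(g)\,\mathcal{F}^\sharp(\omega)$, but you should say so. Second, the $A_\star$-bimodule property of $\Delta_{\mathcal{F}}$ amounts to $\Delta_{\mathcal{F}}(\alpha_{\mathcal{F}}(a))=\alpha_{\mathcal{F}}(a)\otimes_{A_\star}1$ and $\Delta_{\mathcal{F}}(\beta_{\mathcal{F}}(a))=1\otimes_{A_\star}\beta_{\mathcal{F}}(a)$, i.e., to $\mathcal{F}^\sharp(\alpha_{\mathcal{F}}(a)\otimes_{A_\star}1)=\Delta(\alpha_{\mathcal{F}}(a))\,\mathcal{F}$ and its $\beta$-analogue; these are further instances of~\eqref{eq:2coc} of exactly the type you compute elsewhere, not consequences of $\epsilon\circ\alpha_{\mathcal{F}}=\mathrm{id}_A$ alone.
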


\begin{Remark}[inverse cocycle] \label{rem:invcoc}
For $\mathcal{F}^{\sharp -1}\colon H\otimes_{A} H\to H\otimes_{A_\star}H$ to exist, it is sufficient that there is an element $\mathcal{F}^{-1}\in H\otimes_{A_\star} H$ such that $\mathcal{F}^{-1}\cdot I_A \subset I_{A_\star}$ and the identities $\mathcal{F}^{-1}\mathcal{F} = 1\otimes 1 + I_{A_\star}$ and $\mathcal{F}\mathcal{F}^{-1} = 1\otimes 1 + I_{A_\star}$ hold in $H\otimes H$. In that case, $\mathcal{F}^{\sharp -1} = \mathcal{F}^{-1 \sharp}$. This will be the situation throughout the article, hence
we do not need to distinguish $\mathcal{F}^{\sharp -1}$ from the left multiplication with $\mathcal{F}^{-1}$.
In terms of $\mathcal{F}^{-1}$ and $A_\star$,
the cocycle condition~(\ref{eq:2coc}) is equivalent to the condition
\begin{gather*}
\big(\mathcal{F}^{-1}\otimes_{A_\star} 1\big)(\Delta\otimes_{A_\star}\mathrm{id})\big(\mathcal{F}^{-1}\big)
= \big(1\otimes_{A_\star}\mathcal{F}^{-1}\big)(\mathrm{id}\otimes_{A_\star}\Delta)\big(\mathcal{F}^{-1}\big).
\end{gather*}
\end{Remark}

\begin{Remark}\label{rem:cocyccoass} The proof of the associativity of $\star$ in the theorem follows from comparing $(a\star b)\star c$ and $a\star(b\star c)$, which are from the definition~(\ref{eq:starfromF}) easily calculated to be
\begin{gather*}
(a\star b) \star c = \mu(\mu\otimes\id)[(\Delta\otimes\id)\mathcal{F}](\mathcal{F}\otimes\id)(\blacktriangleright\otimes\blacktriangleright\otimes\blacktriangleright)(a\otimes b\otimes c),
\\
a\star (b \star c) = \mu(\id\otimes\mu)[(\id\otimes\Delta)\mathcal{F}](\id\otimes \mathcal{F})(\blacktriangleright\otimes\blacktriangleright\otimes\blacktriangleright)(a\otimes b\otimes c).
\end{gather*}
Thus the cocycle condition~(\ref{eq:2coc}) implies the coassociativity. The converse does not hold for arbitrary bialgebroids. Indeed, let $\mu_2 = \mu(\mu\otimes\id) = \mu(\id\otimes\mu)$ be the second iterate of the multiplication and $\mathcal{F}_{3a}$ and $\mathcal{F}_{3b}$ the left and the right-hand sides of the cocycle condition~(\ref{eq:2coc}); we can see only that the coassociativity implies $\mu_3(\mathcal{F}_{3a}-\mathcal{F}_{3b})(\blacktriangleright\otimes \blacktriangleright\otimes\blacktriangleright)(a\otimes b\otimes c) = 0$
for all $a,b,c\in H$. This implies $\mathcal{F}_{3a}-\mathcal{F}_{3b} = 0$ in $H\otimes_A H\otimes_A H$ if $\blacktriangleright\otimes\blacktriangleright \otimes\blacktriangleright$ is nondegenerate in the first argument, which is not true for
an arbitrary bialgebroid. The nondegeneracy holds for the undeformed Heisenberg algebra and by Theorem~\ref{th:th4halg} more generally for the $U(\gg)$-bialgebroid $H_\gg$ studied in this paper~-- this is the content of Theorem~\ref{th:th4halg} with the consequence for the cocycle condition, Corollary~\ref{cor:FDFimpliesDt}.
\end{Remark}

\begin{Example}[basic example of a noncommutative bialgebroid over a {\it commutative} base]\label{ex:basic} Let $A = C^\infty(M)$, where $M$ is a smooth real manifold and let $H=\mathcal{D}$ be the algebra of global {\it differential operators} with smooth coefficients. The formula $\Delta(D)(f,g) = D(f\cdot g)$ defines a~bidifferential operator $\Delta(D)\in\mathcal{D}\otimes_{C^\infty(M)}\mathcal{D}$; this defines a~cocommutative coproduct~$\Delta$. The base $C^\infty(M)$ is commutative and $\alpha = \beta$ is the canonical embedding of the algebra of functions into the algebra of differential operators.

This example generalizes. The algebra of differential operators $\mathcal{D}$ is canonically isomorphism to the universal enveloping algebra of the Lie algebroid given by the tangent bundle $V = T M$ where the anchor map $a$ is the identity. Universal enveloping algebras of Lie algebroids are bialgebroids in a canonical way. To explain this in few lines let us say that a Lie algebroid $L = (V,a,[\,,\,])$ is a vector bundle $V$ over a manifold $M$ together with a map $a\colon V\to T M$ of vector bundles (called the anchor) and an antisymmetric $\mathbb{R}$-bilinear bracket $[\,,\,]$ on the $C^\infty(M)$-module of global sections $\Gamma V$. It is required that the map, $a_*\colon \Gamma V\to\Gamma T M$ is a map of $\mathbb{R}$-Lie algebras (where the bracket on $\Gamma T M$ is the Lie bracket of vector fields) and is satisfying the Leibniz rule $[X,f Y] = f [X,Y] + a(X)(f)Y$.
For any Lie algebroid $V$ one defines the universal enveloping algebra $U(L)$ as the tensor algebra of the space of sections $\Gamma V$ modulo the ideal generated by the relations $[X,Y] - X\otimes Y + Y\otimes X$, and $X f Y - a(X)(f)Y - f X Y$, where $f\in C^\infty(M)$, $X,Y\in\Gamma V$. It becomes a bialgebroid over the commutative algebra $C^\infty(M)$ via the formula $\Delta(f) = f\otimes 1$ for $f\in C^\infty(M)\subset U(V)$ and $\Delta(X) = X\otimes 1 + 1\otimes X$ for $X\in\Gamma V\subset U(V)$.

Thesis~\cite{kowalzig} settled the question when bialgebroid structures on universal enveloping algebras of Lie algebroids, and more generally of Lie--Rinehart algebras, have an antipode (see Section~\ref{sec:antipode}), namely iff a flat connection in the appropriate context exists. In~\cite{xu} an erroneous argument (confusing the role of the antipode with the duality for functional spaces) was given that the bialgebroids of differential operators can not have an antipode. The special case of the Heisenberg--Weyl algebra in this article of course has an antipode.
\end{Example}

\begin{Remark}[notation on completions] Given a vector space $V$ an $\mathbb{N}$-cofiltration of $V$ is given by specifying for all $n\in\mathbb{N}$ the quotient spaces $V_n$ called cofiltering components with projections $\pi_n\colon V\to V_n$, connecting quotient maps $\pi_{n m}\colon V_{m}\to V_n$ for all $n\leq m$, satisfying $\pi_{n m}\circ\pi_{m k} = \pi_{n k}$ for all $n\leq m\leq k$, and commuting with the projections, $\pi_{n m}\circ\pi_m=\pi_n$. One considers the completion $\hat{V}$ as the inverse limit $\underset\longleftarrow\lim{}_n V_n$ which may be realized by threads, that is sequences $(v_n)_{n\in\mathbb{N}}$ where $v_n\in V_n$ and $\pi_{n m}(v_m) = v_n$ for all $n\leq m$. The canonical map $V\to\hat{V} = \underset\longleftarrow\lim{}_n V_n$, $v\mapsto (\pi_n(v))_n$ is injective iff for every $v\in V$ there is $n\in\mathbb{N}$ such that $\pi_n(v)\neq 0$. In that case, we usually identify $V$ with its image within $\hat{V}$. If the canonical map $V\to\hat{V}$ is an isomorphism we say that $V$ is complete. A~cofiltration is often given by a~decreasing filtration, that is a~decreasing sequence of subspaces $V'_n$ with empty common intersection; the quotients are given by $V_n = V/V'_n$ and the inclusions $V'_n\hookrightarrow V'_{n+1}$ induce the quotient maps $V/V'_{n+1}\to V/V'_n$. If all~$V'_n$ are of finite codimension in $V$, then the completion may be understood in the topological sense (consisting of all equivalence classes of Cauchy sequences), where~$V$ is understood as a topological vector space with linear topology for which the subspaces~$V'_n$ form a basis of neighborhoods of $0$. Given two cofiltered spaces $V$ and $W$, there is a completed tensor product also adorned with the hat symbol, $V\hat\otimes W = \underset\longleftarrow\lim{}_{n,m} V_n\otimes W_m$, where the inverse limit is over the directed set $\mathbb{N}\times\mathbb{N}$; the spaces $V_n\otimes V_m$ together with the connecting morphisms form an $\mathbb{N}\times\mathbb{N}$-cofiltration~\cite{stojicPhD}, which can be replaced, in the generality of this paper, by an equivalent $\mathbb{N}$-cofiltration~\cite[Appendix~2]{halg}. Given a cofiltered vector space $V$, an expression $\sum_{\lambda\in\Lambda} v_\lambda$ where $v_\lambda\in V$ is a profinite (or formal) sum~\cite{stojicPhD} if for each $n$ there are only finitely many $\lambda\in\Lambda$ such that $\pi_n(v_\lambda)\neq 0$. Then the finite sums $s_n = \sum_{\lambda, \pi_n(\lambda)\neq 0}\pi_n(v_\lambda)\in V_n$ define a thread $(s_n)_{n\in\mathbb{N}}$ which is therefore an element of the completion $\hat{V}$ called the value $v$ of the profinite sum $\sum_{\lambda\in\Lambda} v_\lambda$ and we write $v = \sum_{\lambda\in\Lambda} v_\lambda$. A linear map $f\colon V\to W$ between cofiltered vector spaces distributes over profinite sums (equivalently: is a cofiltered map) if for each profinite sum $\sum_{\lambda\in\Lambda} v_\lambda$ in $V$ the expression $\sum_{\lambda\in\Lambda} f(v_\lambda)$ is profinite in $W$ and the equation $f\big(\sum_{\lambda\in\Lambda} v_\lambda\big) = \sum_{\lambda\in\Lambda} f(v_\lambda)$ holds for the values; cofiltered spaces (even if noncomplete) with cofiltered maps form a monoidal category with the usual tensor product $\otimes$. Similarly, the subcategory of complete cofiltered spaces is a monoidal category with respect to the completed tensor product $\hat\otimes$. Each cofiltered map $f\colon V\to W$ induces the morphisms among the limits of cofiltrations, which can be interpreted as the completed cofiltered map $\hat{f}\colon \hat{V}\to\hat{W}$, the completion of $f$, which commutes with the canonical maps $i_V\colon V\to\hat{V}$, $i_W\colon W\to\hat{W}$ into the completions, $\hat{f}\circ i_V = i_W\circ f$. It can be seen that a subspace $Z$ of a complete cofiltered space $V\cong\underset\longleftarrow\lim{}_\lambda V_\lambda$ is complete iff for all profinite sums whose summands are in $Z$ their values are also in $Z$. The cofiltered maps among the complete vector spaces for which the topological interpretation holds, coincide with the continuous linear maps. Algebra in the monoidal category of cofiltered vector spaces with the usual tensor product is said to be a noncomplete cofiltered algebra. The modules and ideals of complete algebras should be completed as well, the fact which we often pass over silently. In particular, if~$A$ is a noncomplete cofiltered algebra, $M$~a~right $A$-module and~$N$ a left $A$-module, such that the actions are cofiltered maps, the kernel $I_A = \Ker(M\otimes N\to M\otimes_A N)$ is a~subspace $I_A\subset M\otimes N\hookrightarrow M\hat\otimes N$ and its completion is identified with a subspace $\hat{I}_A\subset M\hat\otimes N$. If the actions on $M$ and $N$ are cofiltered maps, then the vector space $M\hat\otimes_A N = M\hat\otimes N/\hat{I}_A$ is cofiltered, and called the completed tensor product of~$M$ and~$N$ over~$A$, see~\cite{stojicPhD}. The multiplication of~$A$ is cofiltered, hence it extends to a cofiltered map $\hat{A}\otimes\hat{A}\to\hat{A}$ and even to a unique cofiltered map $A\hat\otimes A\cong \hat{A}\hat\otimes\hat{A}\to\hat{A}$; thus $\hat{A}$ becomes an algebra in the category of complete cofiltered vector spaces with the completed tensor product. In this situation, if $M$ is a right $A$-module and the action is a cofiltered map $A\otimes M\to M$, then $\hat{M}$ is a right complete cofiltered $\hat{A}$-module, in other words the action is a cofiltered map $\hat{A}\hat\otimes\hat{M}\to\hat{M}$ (note the complete tensor product), hence by restriction a fortiori an $A$-module. It is straightforward to observe that $\hat{M}\hat\otimes_{\hat{A}}\hat{N}\cong\hat{M}\hat\otimes_A\hat{N}\cong M\hat\otimes_A N$ (indeed, $\hat{I}_{\hat{A}} = \hat{I}_A$ as a subset of $M\hat\otimes N\cong\hat{M}\hat\otimes\hat{N}$). Therefore, if $U$ is the (faithful) forgetful functor from the category of $\hat{A}$-bimodules with the completed tensor product to the category of the usual $A$-bimodules, then there is a natural isomorphism of functors $\hat\otimes_A \circ (U\times U)\cong U\circ \hat\otimes_{\hat{A}}$. The following is a simple example of our interest. If $t$ is a formal variable and $B$ an algebra then the completion of the polynomial ring $B[t]$ with respect to the cofiltrations by the finite dimensional subspaces $B[t]/(t^n B[t])$ of truncations modulo $t^n$ is the formal series ring $B[[t]]$. Then, the restriction of the (bi)functor $\otimes_{B[t]}$ to the subcategory of complete $B[t]$-modules (with the usual $\otimes$) such that the actions are cofiltered, agrees in the above sense with the (bi)functor $\otimes_{B[[t]]}$. In particular, we can move the formal series from $B[[t]]$ across the tensor product as a limiting case of moving its finite truncations in $B[t]/(t^n B[t])\cong B[[t]]/(t^n B[[t]])$ at the level of cofiltered components. Note a minor difference between the two (bi)functors, namely for $\otimes_{B[[t]]}$ we work with the $B[[t]]$-bimodule structures in the monoidal category of complete cofiltered spaces with the $\hat\otimes$-tensor product, rather than with the usual $B[t]$-bimodule structures.

Unlike in~\cite{halg}, we do not pay full attention to completions.
\end{Remark}

\begin{Example}[an algebraic analogue of the basic Example~\ref{ex:basic} of a noncommutative bialgebroid over commutative base] \label{ex:varbasic}
Here $A=\mathcal{O}(M)$ is the algebra of regular functions on a smooth affine variety over a field $\genfd$ of characteristic $0$ and $H=\mathcal{D}$ is the algebra of regular differential operators. Again, $\Delta(D)(f,g) = D(f\cdot g)$, the base $A$ is commutative and $\alpha = \beta$ is the canonical inclusion of elements of $A$ as the operators of left multiplication. Here and in Example~\ref{ex:basic} the counit is taking the constant term and $\blacktriangleright$ is the usual action of differential operators on functions (below denoted $\triangleright$ or as evaluation $D(f) = D\triangleright f$). If~$M$ is the affine $n$-space, $H$ is the Weyl algebra $A_n$ and the corresponding coproduct $\Delta_0$ is primitive on the generators $\partial^1,\dots,\partial^n$ and $\Delta_0(x_\mu) = x_\mu$ on the remaining generators $x_1,\dots,x_n$ of the Weyl algebra. This $\mathcal{O}(M)$-bialgebroid is in fact a~Hopf algebroid with the antipode $S(x_\mu) = x_\mu$ and $S(\partial^\nu) = -\partial^\nu$. We may complete $A_n$ by the degree of a differential operator to the algebra $\hat{A}_n$ and the Hopf algebroid structure continuously extends~(\cite{halg}) to a topological coproduct on $\hat{A}_n$ with values in the completed tensor product $\hat{A}_n\hat\otimes_A\hat{A}_n$ relative over $A = \genfd[x_1,\dots,x_n]$,
\begin{gather}\label{eq:Delta0}
\Delta_0 \colon \ \hat{A}_n\to\hat{A}_n\hat\otimes_{\genfd[x_1,\dots,x_n]}\hat{A}_n,
\qquad \Delta_0(x_\mu) = x_\mu\otimes 1, \qquad \Delta_0(\partial^\nu) = 1\otimes\partial^\nu+\partial^\nu\otimes 1.
\end{gather}
By abuse of notation, we denote by $\Delta_0$ also the restriction of $\Delta_0$ to the polynomial algebra of $\partial^1,\dots,\partial^n$ (or to its completion $\genfd[[\partial^1,\dots,\partial^n]]$, the algebra of formal power series), which therefore becomes a Hopf $\genfd$-algebra (respectively, a topological Hopf $\genfd$-algebra).
\end{Example}

\begin{Example}[related to {\it deformation quantization}]\label{ex:defquant} Ping Xu~\cite{xu} extends the base algebra $A = C^\infty(M)$
from Example~\ref{ex:basic} or $A = \genfd[x_1,\dots,x_n]$ from Example~\ref{ex:varbasic} to the algebra of formal power series $A[[h]]$ in a formal variable $h$ with coefficients in $A$. Then $\mathcal{D}[[h]]$ carries a left $A[[h]]$-bialgebroid structure obtained from $A$-bialgebroid $\mathcal{D}$ by extending the scalars from $A$ to $A[[h]]$ and one works with the $h$-adically completed tensor product $\hat\otimes_{\genfd[[h]]}$.
\end{Example}

\begin{Theorem}[\cite{xu}]\label{thm:defqinterp}
Suppose $M$ is a Poisson manifold and a formal bidifferential operator $\mathcal{F} \in (\mathcal{D}\otimes_{C^\infty(M)}\mathcal{D})[[h]]$ is given. If $\mathcal{F}$ defines a deformation quantization of $M$ via the star product $\mu\mathcal{F}(f\otimes g)$ then $\mathcal{F}$ is a Drinfeld twist for the left $C^\infty(M)[[h]]$-bialgebroid $\mathcal{D}[[h]]$ of formal power series in regular differential operators. Consequently, by Theorem~{\rm \ref{th:cocycimpltw}}, each deformation quantization of $M$ defines also a twist deformation of the bialgebroid $\mathcal{D}[[h]]$, where the twisted base algebra is $C^\infty(M)[[h]]$, equipped with the star product $($both in the sense of constructed from a bialgebroid twist and in the sense of deformation quantization$)$.
\end{Theorem}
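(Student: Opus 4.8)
The plan is to verify directly that a deformation quantization bidifferential operator $\mathcal{F}$ meets the two defining conditions of a Drinfeld twist for the left $C^\infty(M)[[h]]$-bialgebroid $\mathcal{D}[[h]]$ --- the counitality and the $2$-cocycle condition \eqref{eq:2coc} --- and then to invoke Theorem~\ref{th:cocycimpltw} for the ``consequently'' clause. Everything is carried out coefficientwise in $h$, using that the relevant spaces are $h$-adically complete with continuous structure maps and the identification $(\mathcal{D}\otimes_{C^\infty(M)}\mathcal{D}\otimes_{C^\infty(M)}\cdots)[[h]]\cong\mathcal{D}[[h]]\hat\otimes_{C^\infty(M)[[h]]}\mathcal{D}[[h]]\hat\otimes_{C^\infty(M)[[h]]}\cdots$ from the conventions on completions.

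First I would unwind the bialgebroid structure of $\mathcal{D}$ (Examples~\ref{ex:basic}--\ref{ex:varbasic}): the counit is $\epsilon(D)=D(1)$, the order-zero part of $D$, and since $\alpha=\beta$ is the embedding of $C^\infty(M)$ as multiplication operators, the action of Lemma~\ref{lem:prodbtr} specializes to the usual one, $D\blacktriangleright f=\epsilon(D\alpha(f))=D(f)$. Hence the star product \eqref{eq:starfromF} attached to $\mathcal{F}$ is precisely $f\star g=\mathcal{F}^{(1)}(f)\,\mathcal{F}^{(2)}(g)=(\mu\circ\mathcal{F})(f\otimes g)$, the star product of the given deformation quantization; in particular $\star$ is associative, has $1$ as a two-sided unit (the normalization axiom $1\star f=f\star 1=f$), and $f\star g=fg+O(h)$.

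For the counitality I would compute $(\epsilon\otimes_A\id)(\mathcal{F})$ inside $A\otimes_A\mathcal{D}\cong\mathcal{D}$: it is the differential operator $f\mapsto\epsilon(\mathcal{F}^{(1)})\,\mathcal{F}^{(2)}(f)=\mathcal{F}^{(1)}(1)\,\mathcal{F}^{(2)}(f)=(\mu\circ\mathcal{F})(1\otimes f)=1\star f=f$; thus $(\epsilon\otimes_A\id)(\mathcal{F})$ acts as the identity on every function, and since a differential operator is determined by its action on functions, $(\epsilon\otimes_A\id)(\mathcal{F})=1_H$. The mirror computation using $f\star 1=f$ gives $(\id\otimes_A\epsilon)(\mathcal{F})=1_H$.

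For the $2$-cocycle condition I would use the computation recorded in Remark~\ref{rem:cocyccoass}: writing $\mathcal{F}_{3a}$, $\mathcal{F}_{3b}$ for the two sides of \eqref{eq:2coc}, expansion of $(a\star b)\star c$ and $a\star(b\star c)$ through \eqref{eq:starfromF} together with $\Delta(D)(f,g)=D(fg)$ gives $(a\star b)\star c=\mu_2\,\mathcal{F}_{3a}(\blacktriangleright\otimes\blacktriangleright\otimes\blacktriangleright)(a\otimes b\otimes c)$ and $a\star(b\star c)=\mu_2\,\mathcal{F}_{3b}(\blacktriangleright\otimes\blacktriangleright\otimes\blacktriangleright)(a\otimes b\otimes c)$, so associativity of $\star$ forces the element $\mathcal{F}_{3a}-\mathcal{F}_{3b}$ of $\mathcal{D}[[h]]\hat\otimes_{C^\infty(M)[[h]]}\mathcal{D}[[h]]\hat\otimes_{C^\infty(M)[[h]]}\mathcal{D}[[h]]$, regarded as a tridifferential operator $(a,b,c)\mapsto D_1(a)D_2(b)D_3(c)$, to vanish on all triples of functions. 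The one step that genuinely requires care --- it is exactly the step that fails for a general bialgebroid, cf.\ Remark~\ref{rem:cocyccoass} --- is the nondegeneracy needed to strip off $\mu_2$: the natural map $\mathcal{D}\otimes_{C^\infty(M)}\mathcal{D}\otimes_{C^\infty(M)}\mathcal{D}\to\Hom(A^{\otimes 3},A)$, $D_1\otimes D_2\otimes D_3\mapsto\big((a,b,c)\mapsto D_1(a)D_2(b)D_3(c)\big)$, is injective. Here this is the classical fact that a multidifferential operator is determined by its values: it is proved in a local chart, where $\mathcal{D}$ is free over $C^\infty$ on the monomials $\partial^I$ and the three arguments are separated by evaluating on coordinate monomials, and then globalized by the sheaf property; in the formal setting it is applied to each $h$-coefficient. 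This yields $\mathcal{F}_{3a}=\mathcal{F}_{3b}$, which is \eqref{eq:2coc}. Finally, since $\mathcal{F}=1\otimes 1+O(h)$, an $h$-adic inverse $\mathcal{F}^{-1}$ exists by the usual geometric series, so $\mathcal{F}^\sharp$ is invertible (Remark~\ref{rem:invcoc}) and Theorem~\ref{th:cocycimpltw} applies, producing the twisted left $C^\infty(M)[[h]]$-bialgebroid with twisted base $(C^\infty(M)[[h]],\star)$.
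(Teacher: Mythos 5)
The paper does not prove this theorem at all---it is quoted from Xu~\cite{xu} as imported background---so there is no in-paper proof to compare against. Your argument is nonetheless a correct reconstruction of the standard proof, and it follows exactly the mechanism the paper itself isolates in Remark~\ref{rem:cocyccoass}: counitality of $\mathcal{F}$ is the normalization $1\star f=f\star 1=f$, and associativity of $\star$ yields the cocycle condition~\eqref{eq:2coc} only after one supplies the nondegeneracy of $\blacktriangleright\otimes\blacktriangleright\otimes\blacktriangleright$, which for the bialgebroid of differential operators is precisely the classical fact that a multidifferential operator is determined by its values on functions; you correctly identify this as the one step that fails for general bialgebroids and justify it. The remaining points (the $h$-adic invertibility of $\mathcal{F}$ giving $\mathcal{F}^\sharp$ invertible, and the identification of the twisted base with $(C^\infty(M)[[h]],\star)$) are handled consistently with Remark~\ref{rem:invcoc} and Theorem~\ref{th:cocycimpltw}.
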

We are interested in using the bialgebroid techniques to find explicit formulas for $\mathcal{F}$ and also to describe the Xu's bialgebroid in detail in special cases. There are rather few explicit formulas for bialgebroid twists in the literature which are not induced from bialgebra twists
by a procedure studied in~\cite{borpacholtwistedoid}.

\section{Phase spaces of Lie type as bialgebroids}\label{s:repeathalg}

In this section, we present the bialgebroid $H_\gg$ from our earlier article~\cite{halg}. This bialgebroid is in fact a Hopf algebroid, but the discussion of the antipode is deferred to Section~\ref{sec:antipode}.

Throughout, $\gg$ is a fixed Lie algebra over $\genfd$ with basis $\hx_1,\dots,\hx_n$, $U(\gg)$ is the universal enveloping algebra and $S(\gg)$ the symmetric algebra of~$\gg$. The generators of $U(\gg)$ are also denoted $\hx_1,\dots,\hx_n$ (and viewed as noncommutative coordinates), but the corresponding generators of~$S(\gg)$ are $x_1,\dots,x_n$ (and viewed as coordinates on the undeformed commutative space). The structure constants $C_{\mu\nu}^\lambda$ are given by
\begin{gather*}
[\hx_\mu,\hx_\nu] = C_{\mu\nu}^\lambda \hx_\lambda.
\end{gather*}
Let $\partial^1,\dots,\partial^n$ be the dual basis of $\gg^*$, which are also (commuting) generators of $S(\gg^*)$. Let $\hat{S}(\gg^*)$ be the formal completion of $S(\gg^*)$ (which should be interpreted as the algebra of formal functions on the space of deformed momenta). A basis of neighborhoods of $0$ is made out of powers of the ideal $J = \gg\cdot\hat{S}(\gg^*)$ generated by $\gg$ ($J$-adic topology of the formal power series ring). The ring $M_n(\hat{S}(\gg^*))$ of $n\times n$-matrices with entries in $\hat{S}(\gg^*)$ is rank $n^2$ free $\hat{S}(\gg^*)$-module and as such it inherits the topology of componentwise convergence. By a straightforward check we can see that this topology is equivalent to the $M_n(J)$-adic topology, where $M_n(J) = M_n(\gg\cdot\hat{S}(\gg^*))$ consists of all matrices with entries in $\gg\cdot\hat{S}(\gg^*)$. Therefore $M_n(\hat{S}(\gg^*)$ is a linearly compact topological ring as well. We introduce an auxiliary matrix $\CC\in M_n(\hat{S}(\gg^*))$ with entries
\begin{gather}\label{CCmatrix}
\CC^\alpha_\beta := C^\alpha_{\beta\gamma}\partial^\gamma \in \hat{S}(\gg^*),
\end{gather}
with the summation over repeated Greek indices and where, for the purposes of this article, we use the convention that $\alpha$ is the row and $\beta$ the column index. In this notation, we introduce matrices
\begin{gather}\label{eq:phi}
\phi := \frac{-\CC}{e^{-\CC} -1} =\sum_{N=0}^\infty \frac{(-1)^N B_N}{N!} \CC^N, \qquad \tilde{\phi} := \frac{\CC}{e^{\CC} -1} =\sum_{N=0}^\infty \frac{B_N}{N!} \CC^N,
\end{gather}
where $B_N$ are the Bernoulli numbers. By a simple comparison of the expressions~(\ref{eq:phi}) we obtain
\begin{gather}\label{eq:phiphiO}
\tilde\phi_\alpha^\beta = \phi^\alpha_\rho \OO^{-1\rho}_\beta,\qquad \text{where}\qquad \OO := e^\CC\in M_n\big(\hat{S}(\gg^*)\big).
\end{gather}
By $\hat{A}_n$ denote the completion by the degree of differential operators of the $n$-th Weyl algebra $A_n$ with generators $x_1,\dots,x_n$, $\partial^1,\dots,\partial^n$ hence the underlying vector space of $A_n$ is $S(\gg)\otimes S(\gg^*)$ and $\hat{A}_n$ is some completion of it. The symmetric algebra $S(\gg)$ is a~Hopf algebra which acts on~$\hat{S}(\gg^*)$ by a~unique action ${\boldsymbol \delta}\colon S(\gg)\to\End_\genfd(\hat{S}(\gg^*))$ which is a Hopf action, namely
\begin{gather*}{\boldsymbol \delta}(f)(P\cdot Q)={\boldsymbol \delta}(f_{(1)})(P){\boldsymbol \delta}(f_{(2)})(Q)\qquad \text{and}\qquad {\boldsymbol \delta}(f)(1) = \epsilon(f) 1, \end{gather*}
and which satisfies ${\boldsymbol \delta}(x_\nu)(\partial^\nu) = \delta^\nu_\mu$ on the generators. It is a useful point of view for the deformations below that the product in $A_n$ is the multiplication of the smash product algebra given by the formula
\begin{gather*}(f\otimes P) (g\otimes Q) = \sum f\cdot g_{(1)}\otimes {\boldsymbol \delta}(g_{(2)})(P)\cdot Q\qquad \text{for}\quad f,g\in S(\gg),\quad P,Q\in\hat{S}(\gg^*).\end{gather*}
The algebra $\hat{A}_n$ acts on the symmetric algebra $S(\gg)$ of polynomials in $x_1,\dots,x_n$, extending the Fock action of $A_n$ on $S(\gg)$ by differential operators $(P,f)\mapsto P(f)$.

Now define the elements $\hat{x}^\phi,\hat{y}^\phi\in\hat{A}_n$
\begin{gather}\label{eq:xyphi}
\hat{x}^\phi_\rho := \sum_\tau x_\tau \phi^\tau_\rho,\qquad \hat{y}^\phi_\rho := \sum_\tau x_\tau\tilde\phi^\tau_\rho.
\end{gather}
With our choice of the matrix ${\boldsymbol \phi}$, the right-hand side in the equations~\eqref{eq:xyphi} are dual, as explained and derived in~\cite{halg}, Section~\ref{sec:bialgebroids}, to the expressions for the left and right invariant vector fields respectively, corresponding to the Lie algebra generators, in the chart given by the exponential map. The expressions for $\phi^\tau_\rho$, $\hat{x}^\phi_\rho$ and $\hat{y}^\phi_\rho$ were derived many times historically, most notably by Berezin in the geometric context~\cite{berezin} and the corresponding expression for the star product studied in $C^\infty$-context has been introduced by Gutt~\cite{gutt}; the supersymmetric generalization of the formula in the algebraic context and in the dual language of coderivations (rather than derivations and vector fields here) is in~\cite{Petracci}; finally the article~\cite{ldWeyl} gives three proofs of the formula valid over any ring containing rationals: a direct calculation, a proof in formal geometry over arbitrary rings and an algebraic proof close in spirit to the approach in~\cite{Petracci}. The expression for~${\boldsymbol \phi}$ corresponds to the part of Baker--Hausdorff series linear in one of the variables. The same star product is often defined for exponentials via the full Baker--Hausdorff series~\cite{Amelino,HallidaySzabo,exp}.

From~(\ref{eq:phiphiO}) it follows immediately that
\begin{gather}\label{eq:yeqxO}
\hat{y}^\phi_\alpha = \hat{x}^\phi_\beta \mathcal{O}^{-1\beta}_\alpha,
\end{gather}
and one can also prove (see, e.g., \cite[Appendix~1]{halg})
\begin{gather}\label{eq:commxy}
[\hat{x}^\phi_\alpha,\hat{y}^\phi_\beta] = 0.
\end{gather}
In the geometric interpretation, equation~\eqref{eq:commxy} means that all left invariant vector fields on a Lie group commute with all right invariant vector fields. Then $\hx_\rho\mapsto\hx_\rho^\phi$ extends to a unique algebra map $(-)^\phi\colon U(\gg)\to\hat{A}_n$. This {\it realization} map $(-)^\phi$ is related to the symmetrization (PBW) isomorphism
\begin{gather*}
\xi\colon \ S(\gg)\cong U(\gg),\qquad w_1\cdots w_r\mapsto \frac{1}{r!}
\Sigma_{\sigma\in \Sigma(r)} \hat{w}_{\sigma 1}\cdots \hat{w}_{\sigma r}, \qquad w_i\in\gg\hookrightarrow U(\gg),
\end{gather*}
where $\Sigma(r)$ is the symmetric group on $r$ letters, in the sense that $\xi^{-1}(u)= u^\phi(1)$ ($\phi$-realization of $u$ acting on $1$) and more generally $u\cdot\xi(f) = u^\phi(f)$ for all $u\in U(\gg),f\in S(\gg)$, hence the star product $f\star g:=\xi^{-1}(\xi(f)\cdot_{U(\gg)}\xi(g))$ on $\gg$ may be written as $f\star g = \xi(f)^\phi(g)$. Thus, in physics literature, our choice of~$\phi$ is said to correspond to the symmetric ordering. Analogously, other coalgebra isomorphisms $S(\gg)\cong U(\gg)$ identical on $\gg$ correspond to different choices of~$\phi$ or to different orderings, see~\cite{scopr}. Our~$\phi$ corresponds to the symmetric ordering~\cite{ldWeyl}. The formula ${\boldsymbol \phi}(\hat{x}_\mu)(\partial^\nu) = \phi^\nu_\mu$ defines a linear map ${\boldsymbol \phi}(\hat{x}_\mu)\colon \gg^*\to\hat{S}(\gg^*)$ which extends by the chain rule to a (equally denoted) unique continuous derivation ${\boldsymbol \phi}(\hat{x}_\mu)\colon \hat{S}(\gg^*)\to\hat{S}(\gg^*)$. Denote by $\Derc(\hat{S}(\gg^*))$ the Lie algebra of all continuous derivations of $\hat{S}(\gg^*)$. A key property of $\phi$ is that the linear map ${\boldsymbol \phi}\colon \gg\to\Derc(\hat{S}(\gg^*))$ extending the rule $\hat{x}_\mu\mapsto{\boldsymbol \phi}(\hat{x}_\mu)\in\Derc(\hat{S}(\gg^*))$ is a Lie algebra homomorphism, hence it further extends to a unique right Hopf action
\begin{gather*}
{\boldsymbol \phi}\colon \ U(\gg)\to \End^{\mathrm{op}}\big(\hat{S}(\gg^*)\big),
\end{gather*}
which we denoted by the same symbol. The property that this action ${\boldsymbol \phi}$ is Hopf ensures that we can define the smash product algebra
\begin{gather*}
H_\gg = U(\gg)\sharp_{{\boldsymbol \phi}}\hat{S}(\gg^*)
\end{gather*}
in the usual way: it is the tensor product of vector spaces $U(\gg)\otimes\hat{S}(\gg^*)$ with the multiplication
\begin{gather*}(u\sharp f) (v\sharp g) = \sum u\cdot v_{(1)}\sharp {\boldsymbol \phi}(v_{(2)})(f)\cdot g\qquad \text{for}\quad u,v\in U(\gg),\quad f,g\in\hat{S}(\gg^*),\end{gather*}
where, for the emphasis, the expression $u\sharp f$ denotes $u\otimes f$ as an element in the smash product (whenever the meaning is clear the simpler product notation $u f$ may be used as well). One can check that the algebra map $H_\gg\to\hat{A}_n$, which on elements of $U(\gg)\sharp\genfd \hookrightarrow H_\gg$ agrees with the realization $\hat{x}_\mu\mapsto\hat{x}^\phi_\mu$ and sends $1\sharp\partial^\beta\in \genfd\otimes\hat{S}(\gg^*)\subset H_\gg$ to $\partial^\beta\in A_n$, is an algebra isomorphism; hence from now on this algebra map $H_\gg\to\hat{A}_n$ is viewed as an identification and sometimes called realization as well. We thus identify $\hat{x}_\mu\in U(\gg)$ and $\hat{x}^\phi_\mu\in \hat{A}_n$ etc.

The isomorphism of coalgebras $\xi\colon S(\gg)\to U(\gg)$ induces the dual isomorphism of algebras $\xi^T\colon U(\gg)^*\to S(\gg)^*\cong \hat{S}(\gg^*)$ hence the map $\mu^T$ dual to the multiplication $\mu\colon U(\gg)\otimes U(\gg)\to U(\gg)$ can be identified with a deformed comultiplication $\Delta_{\hat{S}(\gg^*)} = \big(\xi^T\otimes\xi^T\big)\circ\mu^T\colon \hat{S}(\gg^*) \to\hat{S}(\gg^*)\hat\otimes\hat{S}(\gg^*)$ (the completion on $\otimes$ comes from (co)filtrations, see~\cite{halg}). The source map $\alpha\colon U(\gg)\to H_\gg$ is given by $u\mapsto u\sharp 1$. Introduce
\begin{gather*}
\hy_\mu := \hx_\lambda\sharp\mathcal{O}^\lambda_\mu\in U(\gg)\sharp_\phi\hat{S}(\gg^*) = H_\gg.
\end{gather*}
In the realization $H_\gg\cong\hat{A}_n$, this expression becomes $\hy_\mu^\phi$. The rule $\hx_\mu\mapsto\hy_\mu$ extends to a unique algebra map $\beta\colon U(\gg)^\op\to H_\gg$, the target map. Equation~(\ref{eq:commxy}) implies that $[\alpha(u),\beta(v)] = 0$ for all $u,v\in U(\gg)$. We also often identify $u$ with its image $\alpha(u)$ and $P\in\hat{S}(\gg^*)$ with $1\sharp P\in H_\gg$. Thus, $H_\gg$ becomes a $U(\gg)$-bimodule via $u.h.v = \alpha(u)\beta(v)h$. It has been shown in~\cite{halg} that, with the appropriate completions, $H_\gg$ is a formally completed version of a Hopf $U(\gg)$-algebroid with the coproduct
\begin{gather*}
\Delta_H \colon \ H_\gg\to H_\gg\hat\otimes_{U(\gg)}H_\gg,\qquad
u\sharp P\stackrel{\Delta_H}\mapsto u \sharp P_{(1)}\otimes_{U(\gg)} P_{(2)},\\
\Delta_{\hat{S}(\gg^*)}(P) = P_{(1)}\otimes P_{(2)},\qquad u\in U(\gg), \qquad P\in\hat{S}(\gg^*).
\end{gather*}

Notice that in the undeformed case (that is, when $\gg$ is Abelian) the (topological) bialgebroid~$H_\gg$ is some completion of the Weyl algebra $A_n$ of regular differential operators on $\genfd^n$ where $A_n$ is viewed as a bialgebroid over the commutative base algebra consisting of polynomial functions on $\genfd^n$. Hence, the undeformed~$H_\gg$ is very close to the bialgebroid of differential operators over a smooth manifold.

{\bf Notation.} In the case of deformed bialgebroid $H = H_\gg$, the action $\blacktriangleright$ will be denoted
\begin{gather*}
\blacktriangleright_\gg\colon \ H_\gg\otimes U(\gg)\to U(\gg),\end{gather*}
and in the undeformed case $\triangleright$. The action $\triangleright \colon H_\gg\otimes S(\gg)\to S(\gg)$ is the ``Fock action''of $\hat{A}_n$ by differential operators on polynomials precomposed by the realization isomorphism $H_\gg\cong\hat{A}_n$.

\begin{Proposition}
Given $u,v\in U(\gg)$, $f,g\in S(\gg)$ the following formulas hold: $\xi(f)\triangleright g = f\star g$, $f\triangleright g = f g$, $\partial^\mu\triangleright f = \partial^\mu(f)$, $f\blacktriangleright_\gg 1 = \xi(f)$, $u\triangleright 1 = \xi^{-1}(u)$, $u\blacktriangleright_\gg v = u v$, $\partial^\mu\blacktriangleright_\gg(\hx_\nu v)= \delta^\mu_\nu v + \hx_\nu (\partial^\mu\blacktriangleright_\gg v)$, $\hy_\mu\blacktriangleright_\gg u = u \hx_\mu$ and more generally $\beta(v)\blacktriangleright_\gg u = u v$. The last two formulas imply
\begin{gather}\label{eq:betastar}
\hy_\mu\triangleright f = f\star x_\mu,\qquad \beta(\xi(g))\triangleright f = f\star g,\qquad f,g\in S(\gg).
\end{gather}
\end{Proposition}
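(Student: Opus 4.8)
\textit{Proof strategy.}
The plan is to read every identity off the explicit model of $H_\gg$ as the smash product $U(\gg)\sharp_{\boldsymbol\phi}\hat{S}(\gg^*)$, its realization $H_\gg\cong\hat{A}_n$, and the properties of the symmetrization isomorphism $\xi$ recalled just above, together with the elementary axioms of a bialgebroid counit (everything needed is, in the completed setting, part of the analysis of $H_\gg$ in~\cite{halg}, cf.\ Theorem~\ref{th:th4halg}). The three identities that only involve $\triangleright$ come first. By construction $\triangleright$ is the action of differential operators on polynomials transported along $H_\gg\cong\hat{A}_n$; since under this identification $f\in S(\gg)$ becomes multiplication by $f$ and $\partial^\mu$ becomes $\partial/\partial x_\mu$, the equalities $f\triangleright g=fg$ and $\partial^\mu\triangleright f=\partial^\mu(f)$ are immediate, and $u\triangleright1=\xi^{-1}(u)$ is just the relation $\xi^{-1}(u)=u^\phi(1)$ recalled before the proposition. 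For $\xi(f)\triangleright g$ I would use that relation in the form $u^\phi(g)=\xi^{-1}(u\cdot\xi(g))$ with $u=\xi(f)$, so that $\xi(f)\triangleright g=\xi(f)^\phi(g)=\xi^{-1}(\xi(f)\xi(g))=f\star g$.

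For the $\blacktriangleright_\gg$ identities I would unwind $h\blacktriangleright_\gg u=\epsilon(h\,\alpha(u))$ (item~(i) of the definition of a bialgebroid, in the notation of Lemma~\ref{lem:prodbtr}). Since $\alpha$ is an algebra map and $\epsilon\circ\alpha=\id$, this gives $u\blacktriangleright_\gg v=\epsilon(\alpha(uv))=uv$. Using $[\alpha(u),\beta(v)]=0$, the left $U(\gg)$-linearity $\epsilon(\alpha(u)h)=u\,\epsilon(h)$, and $\epsilon\circ\beta=\id$, one gets $\beta(v)\blacktriangleright_\gg u=\epsilon(\alpha(u)\beta(v))=u\,\epsilon(\beta(v))=uv$, and in particular $\hy_\mu\blacktriangleright_\gg u=\beta(\hx_\mu)\blacktriangleright_\gg u=u\hx_\mu$ because $\beta(\hx_\mu)=\hy_\mu$. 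The commutation relation $\partial^\mu\hx_\nu=\hx_\nu\partial^\mu+\phi^\mu_\nu$ inside $H_\gg$, with $\phi^\mu_\nu\in\hat{S}(\gg^*)$ of augmentation $\hat\epsilon(\phi^\mu_\nu)=\delta^\mu_\nu$, follows directly from the smash-product multiplication and $\Delta_{U(\gg)}(\hx_\nu)=\hx_\nu\otimes1+1\otimes\hx_\nu$; applying $\epsilon$ to $\partial^\mu\alpha(\hx_\nu v)=\hx_\nu\partial^\mu\alpha(v)+\phi^\mu_\nu\alpha(v)$ and using left $U(\gg)$-linearity of $\epsilon$ together with $\hat\epsilon(\phi^\mu_\nu)=\delta^\mu_\nu$ then gives $\partial^\mu\blacktriangleright_\gg(\hx_\nu v)=\hx_\nu(\partial^\mu\blacktriangleright_\gg v)+\delta^\mu_\nu v$. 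Finally $f\blacktriangleright_\gg1=\epsilon(f)=\xi(f)$, either from $f\blacktriangleright_\gg1=\xi(f\triangleright1)=\xi(f)$ (using $f\triangleright1=f$), or directly from the explicit counit $\epsilon(u\sharp P)=\hat\epsilon(P)\,u$ applied to the image of $f$.

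For the two displayed consequences I would first record the compatibility $h\blacktriangleright_\gg\xi(g)=\xi(h\triangleright g)$ of the two actions, which reduces to $\epsilon(h)=\xi(h\triangleright1)$: indeed $h\blacktriangleright_\gg\xi(g)=\epsilon(h\,\alpha(\xi(g)))=\xi\big((h\,\alpha(\xi(g)))\triangleright1\big)=\xi\big(h\triangleright(\xi(g)\triangleright1)\big)=\xi(h\triangleright g)$, the last step using $\xi(g)\triangleright1=g$, while $\epsilon(h)=\xi(h\triangleright1)$ is immediate from the explicit counit $\epsilon(u\sharp P)=\hat\epsilon(P)u$ together with $u^\phi(1)=\xi^{-1}(u)$. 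Granting this, the identity $\hy_\mu\blacktriangleright_\gg u=u\hx_\mu$ at $u=\xi(f)$ reads $\xi(\hy_\mu\triangleright f)=\xi(f)\hx_\mu=\xi(f)\xi(x_\mu)$, whence $\hy_\mu\triangleright f=\xi^{-1}(\xi(f)\xi(x_\mu))=f\star x_\mu$; similarly $\beta(\xi(g))\blacktriangleright_\gg\xi(f)=\xi(f)\xi(g)$ gives $\beta(\xi(g))\triangleright f=\xi^{-1}(\xi(f)\xi(g))=f\star g$.

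The only step that is more than bookkeeping is making precise the bridge between the $\blacktriangleright_\gg$-action on the base $U(\gg)$ and the $\triangleright$-Fock action on $S(\gg)$ --- equivalently, pinning down the counit $\epsilon$ of $H_\gg$ explicitly so that $\epsilon(h)=\xi(h\triangleright1)$, i.e.\ $\epsilon(u\sharp P)=\hat\epsilon(P)u$ --- and keeping track of $\xi$ and of the completions throughout, so that the products and the action of $\phi^\mu_\nu$ all take place inside $\hat{A}_n$ and $\hat{S}(\gg^*)$ as in the preceding remarks. With that settled, everything else is a routine manipulation of the smash-product relations and the bialgebroid axioms.
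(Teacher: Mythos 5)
The paper itself offers no argument for this proposition beyond a citation of \cite{scopr,halg,heisd}, so your reconstruction is necessarily independent; most of it is sound (the $\triangleright$-identities from the Fock action and the relation $\xi^{-1}(u\cdot\xi(f))=u^\phi(f)$, the identities $u\blacktriangleright_\gg v=uv$ and $\beta(v)\blacktriangleright_\gg u=uv$ from $h\blacktriangleright_\gg a=\epsilon(h\alpha(a))$ and the $U(\gg)$-bimodule property of $\epsilon$, and the bridge $\epsilon(h)=\xi(h\triangleright 1)$ used to deduce \eqref{eq:betastar}).

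There is, however, a genuine gap in your treatment of the seventh identity. From $\partial^\mu\hx_\nu=\hx_\nu\partial^\mu+\phi^\mu_\nu$ and left $U(\gg)$-linearity of $\epsilon$ you correctly reach
\begin{gather*}
\partial^\mu\blacktriangleright_\gg(\hx_\nu v)=\hx_\nu\big(\partial^\mu\blacktriangleright_\gg v\big)+\epsilon\big(\phi^\mu_\nu\,\alpha(v)\big)=\hx_\nu\big(\partial^\mu\blacktriangleright_\gg v\big)+\phi^\mu_\nu\blacktriangleright_\gg v,
\end{gather*}
but the final step, replacing $\epsilon(\phi^\mu_\nu\alpha(v))$ by $\hat\epsilon(\phi^\mu_\nu)\,v=\delta^\mu_\nu v$, is not justified: $\epsilon$ is a $U(\gg)$-bimodule map but is not multiplicative, and $\epsilon(P\alpha(v))=P\blacktriangleright_\gg v$ is the full deformed action of $P\in\hat{S}(\gg^*)$ on $v$, not multiplication by the augmentation of $P$. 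Concretely, $\phi^\mu_\nu=\delta^\mu_\nu+\tfrac12 C^\mu_{\nu\gamma}\partial^\gamma+\cdots$, so for $v=\hx_\lambda$ one gets $\phi^\mu_\nu\blacktriangleright_\gg\hx_\lambda=\delta^\mu_\nu\hx_\lambda+\tfrac12 C^\mu_{\nu\lambda}$, and the extra term does not vanish for nonabelian $\gg$. (One can also see directly that the clean rule $\partial^\mu\blacktriangleright_\gg(\hx_\nu v)=\delta^\mu_\nu v+\hx_\nu(\partial^\mu\blacktriangleright_\gg v)$, iterated on $\hx_\nu\hx_\lambda w$ versus $(\hx_\lambda\hx_\nu+C^\rho_{\nu\lambda}\hx_\rho)w$, is incompatible with $\partial^\mu\blacktriangleright_\gg\hx_\rho=\delta^\mu_\rho$ unless $C=0$; so no well-defined action on $U(\gg)$ can satisfy it literally.) The statement your computation actually proves, and the one consistent with the Heisenberg-double action used everywhere else in the paper, is $\partial^\mu\blacktriangleright_\gg(\hx_\nu v)=\phi^\mu_\nu\blacktriangleright_\gg v+\hx_\nu(\partial^\mu\blacktriangleright_\gg v)$, which reduces to the displayed form only when $v$ is a scalar or $\gg$ is abelian. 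You should either prove that corrected identity or flag the discrepancy; as written, the simplification via $\hat\epsilon(\phi^\mu_\nu)=\delta^\mu_\nu$ is a false step. None of the other identities, nor the derivation of \eqref{eq:betastar}, depends on this one, so the rest of your argument stands.
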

\begin{proof} These properties are at length discussed and proved in our articles~\cite{scopr,halg,heisd}. The actions $\blacktriangleright_\gg$ and $\triangleright$ are there denoted $\blacktriangleright$ and $\triangleright$ respectively.
\end{proof}

\begin{Lemma}\label{lem:IUg}
The right ideal $I_{U(\gg)}$ $($compare~\eqref{eq:IA}$)$ is generated by $\hat{x}_\rho\otimes 1 - \mathcal{O}^\tau_\rho\otimes\hat{x}_\tau$, $\rho = 1,\dots, n$.
\end{Lemma}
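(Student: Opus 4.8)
The plan is to reduce the statement to the general description of the kernel recalled in Section~\ref{sec:bialgebroids} (namely, $\operatorname{Ker}(H\otimes H\to H\otimes_A H)$ is the right ideal of $H\otimes H$ generated by the elements $\beta(a)\otimes 1-1\otimes\alpha(a)$, $a\in A$), and then to carry out an invertible change of generators governed by the matrix $\OO$. Specializing that description to $A=U(\gg)$ and $H=H_\gg$, the kernel $I_{U(\gg)}=\operatorname{Ker}(H_\gg\otimes H_\gg\to H_\gg\otimes_{U(\gg)}H_\gg)$ is the right ideal of $H_\gg\otimes H_\gg$ generated by $z_u:=\beta(u)\otimes 1-1\otimes\alpha(u)$, $u\in U(\gg)$. (If one prefers to work in the completed tensor product, $\hat{I}_{U(\gg)}\subset H_\gg\hat\otimes H_\gg$ is its completion, cf.\ the remark on completions; since everything below uses only honest elements of the already complete algebra $H_\gg$ and only finite sums over the $n$ indices $\rho$, the argument transfers verbatim.)

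First I would cut the generating set down to $z_{\hx_1},\dots,z_{\hx_n}$. Using that $\alpha$ is an algebra homomorphism while $\beta$ is an algebra antihomomorphism, a direct expansion of both sides yields the Leibniz-type identity
\[
z_{\hx_i u}=z_{\hx_i}\cdot\bigl(1\otimes\alpha(u)\bigr)+z_u\cdot\bigl(\beta(\hx_i)\otimes 1\bigr),\qquad u\in U(\gg),\ i=1,\dots,n .
\]
Since $z_{(-)}$ is linear, $z_1=0$, and $U(\gg)$ is spanned by monomials in $\hx_1,\dots,\hx_n$, an induction on the length of such monomials shows that every $z_u$ lies in the right ideal generated by $z_{\hx_1},\dots,z_{\hx_n}$; conversely these are among the original generators. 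By the definitions of the source and target maps, $\alpha(\hx_\rho)=\hx_\rho$ and $\beta(\hx_\rho)=\hy_\rho$, so at this stage $I_{U(\gg)}$ is the right ideal generated by the $n$ elements $\hy_\rho\otimes 1-1\otimes\hx_\rho$.

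Next I would convert these generators into the asserted ones. By~\eqref{eq:yeqxO} (equivalently~\eqref{eq:phiphiO}), $\hx$ and $\hy$ are related through the invertible matrix $\OO\in M_n(\hat{S}(\gg^*))$; concretely, inverting that matrix gives $\hx_\rho=\sum_\tau\hy_\tau\,\OO^\tau_\rho$ in $H_\gg$, where the entry $\OO^\tau_\rho\in\hat{S}(\gg^*)\subset H_\gg$ is simply appended to $\hy_\tau$ from the right (right multiplication of an element of $H_\gg$ by an element of $\hat{S}(\gg^*)$ incurs no smash-product corrections). Hence right-multiplying $\hy_\tau\otimes 1-1\otimes\hx_\tau$ by $\OO^\tau_\rho\otimes 1\in H_\gg\otimes H_\gg$ and summing over $\tau$ produces precisely $\hx_\rho\otimes 1-\OO^\tau_\rho\otimes\hx_\tau$. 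Because $\OO=e^{\CC}$ is invertible over $\hat{S}(\gg^*)$, the matrix $(\OO^\tau_\rho\otimes 1)_{\tau,\rho}$ over $H_\gg\otimes H_\gg$ is invertible with inverse $(\OO^{-1\tau}_\rho\otimes 1)_{\tau,\rho}$, so this change of generators is reversible; therefore the right ideal generated by the $n$ elements $\hx_\rho\otimes 1-\OO^\tau_\rho\otimes\hx_\tau$ coincides with $I_{U(\gg)}$.

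I expect the two delicate points to be: (i) keeping $\beta$'s antimultiplicativity straight, so that the Leibniz-type identity — and hence the induction — has exactly the shape written above; and (ii) the matrix bookkeeping in the last step, where one must invoke the precise direction of~\eqref{eq:yeqxO} so that the generators come out with $\OO$ rather than $\OO^{-1}$, and must append the $\hat{S}(\gg^*)$-entry on the correct side to avoid commutator corrections in the smash product. Completions are not a genuine obstacle, since only the finitely many indices $\rho=1,\dots,n$ and honest elements of the already complete algebra $H_\gg$ enter the computation.
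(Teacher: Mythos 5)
Your proposal is correct and follows essentially the same route as the paper: reduce to the generators $\hy_\rho\otimes 1-1\otimes\hx_\rho$ and then pass to $\hx_\rho\otimes 1-\OO^\tau_\rho\otimes\hx_\tau$ by right multiplication with the invertible matrix $\OO^\tau_\rho\otimes 1$, exactly as in the paper's two-line computation. The only difference is that you explicitly verify, via the Leibniz-type identity for $z_{\hx_i u}$, the reduction of the generating set to $u=\hx_1,\dots,\hx_n$, a step the paper asserts without proof; your identity checks out.
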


\begin{proof}$I_{U(\gg)}$ is generated by $\beta(u)\otimes 1 - 1\otimes\alpha(u)$ where $u\in U(\gg)$ and it is enough to confine to the generators $u=\hx_\mu$ of $U(\gg)$. Then $\beta(\hx_\mu)\otimes 1 - 1\otimes \alpha(\hx_\mu) = \hy_\mu\otimes 1 - 1\otimes\hx_\mu
= \hx_\sigma \mathcal{O}^{-1\sigma}_\mu\otimes 1- 1\otimes\hx_\mu
= (\hx_\sigma\otimes 1 - \mathcal{O}^\rho_\sigma\otimes\hx_\rho)(\mathcal{O}^{-1\sigma}_\mu\otimes 1)$. Therefore $\beta(\hx_\mu)\otimes 1 - 1\otimes \alpha(\hx_\mu)$ is in the right ideal generated by elements $\hx_\sigma\otimes 1 - \mathcal{O}^\rho_\sigma\otimes\hx_\rho$. Conversely, $\hx_\sigma\otimes 1 - \mathcal{O}^\rho_\sigma\otimes\hx_\rho = (\hy_\mu\otimes 1 - 1\otimes\hx_\mu)(\mathcal{O}^\mu_\sigma\otimes 1)
\in I_{U(\gg)}$.
\end{proof}

\begin{Theorem}[{\cite[Theorem~4]{halg}}]\label{th:th4halg}
For the topological Heisenberg double $H = H_\gg = U(\gg)\hat\sharp\hat{S}(\gg)$ of the universal enveloping algebra $U(\gg)$, considered as a bialgebroid over $A = U(\gg)$, the right ideal $I^{(k)}$ which is the kernel of the canonical projection $H\otimes\cdots\otimes H\to H\otimes_A\cdots \cdots\otimes_A H$ $(k$~factors, all tensor products properly completed$)$ coincides with the right ideal $I'^{(k)}$ consisting of all elements $r$ in $H\otimes\cdots\otimes H$ such that
\begin{gather*}
\mu_{k-1} \left( r (\blacktriangleright_\gg\otimes\cdots\otimes\blacktriangleright_\gg)(a_1\otimes\cdots \otimes a_k)\right)= 0\qquad
\text{for all}\quad a_1,\dots,a_k\in H.
\end{gather*}
\end{Theorem}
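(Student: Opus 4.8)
My plan is to establish the two inclusions $I^{(k)}\subseteq I'^{(k)}$ and $I'^{(k)}\subseteq I^{(k)}$ separately. The first is formal and holds for any left bialgebroid; the second, which is the real content and uses the Heisenberg-double structure of $H_\gg$, I would prove by induction on $k$, the base case $k=1$ being precisely the faithfulness of the action $\blacktriangleright_\gg\colon H_\gg\otimes U(\gg)\to U(\gg)$.

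For $I^{(k)}\subseteq I'^{(k)}$: first note that $I'^{(k)}$ is a right ideal of $H\otimes\cdots\otimes H$, since for $r\in I'^{(k)}$ and $s=\sum s^{(1)}\otimes\cdots\otimes s^{(k)}$, using that each $\blacktriangleright_\gg$ is an action, $\mu_{k-1}\big((rs)(\blacktriangleright_\gg\otimes\cdots\otimes\blacktriangleright_\gg)(a_1\otimes\cdots\otimes a_k)\big)=\sum\mu_{k-1}\big(r(\blacktriangleright_\gg\otimes\cdots\otimes\blacktriangleright_\gg)((s^{(1)}\blacktriangleright_\gg a_1)\otimes\cdots\otimes(s^{(k)}\blacktriangleright_\gg a_k))\big)=0$. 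Hence it suffices that the right-ideal generators $g_i(u)=1^{\otimes(i-1)}\otimes\beta(u)\otimes 1^{\otimes(k-i)}-1^{\otimes i}\otimes\alpha(u)\otimes 1^{\otimes(k-i-1)}$ (for $1\le i\le k-1$, $u\in U(\gg)$) lie in $I'^{(k)}$. This is immediate from $1\blacktriangleright_\gg a=a$, $\alpha(u)\blacktriangleright_\gg a=ua$ and $\beta(u)\blacktriangleright_\gg a=au$ (Proposition above), since then $\mu_{k-1}\big(g_i(u)(\blacktriangleright_\gg\otimes\cdots\otimes\blacktriangleright_\gg)(a_1\otimes\cdots\otimes a_k)\big)=a_1\cdots a_i\,u\,a_{i+1}\cdots a_k-a_1\cdots a_i\,u\,a_{i+1}\cdots a_k=0$.

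For the inductive step of $I'^{(k)}\subseteq I^{(k)}$, take $r\in I'^{(k)}$ and view it in $H^{\otimes(k-1)}\hat\otimes H_\gg$. Expanding the last tensor leg in the PBW basis $\{e_\nu\sharp\pi_\mu=\alpha(e_\nu)(1\sharp\pi_\mu)\}$ of $H_\gg$ — with $\{e_\nu\}$ a PBW basis of $U(\gg)$ and $\{\pi_\mu\}$ the monomials in $\partial^1,\dots,\partial^n$ — write $r=\sum_{\nu,\mu}t_{\nu\mu}\otimes(e_\nu\sharp\pi_\mu)$ with $t_{\nu\mu}\in H^{\otimes(k-1)}$. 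Fix $a_1,\dots,a_{k-1}\in U(\gg)$ and set $b_{\nu\mu}:=\mu_{k-2}\big(t_{\nu\mu}(\blacktriangleright_\gg\otimes\cdots\otimes\blacktriangleright_\gg)(a_1\otimes\cdots\otimes a_{k-1})\big)\in U(\gg)$. Since $\blacktriangleright_\gg$ is an action and $\alpha$ realizes $U(\gg)$ as operators acting by left multiplication, $b_{\nu\mu}\,e_\nu\big((1\sharp\pi_\mu)\blacktriangleright_\gg a_k\big)=\big((b_{\nu\mu}e_\nu)\sharp\pi_\mu\big)\blacktriangleright_\gg a_k$, so $r\in I'^{(k)}$ becomes $\big(\sum_{\nu,\mu}(b_{\nu\mu}e_\nu)\sharp\pi_\mu\big)\blacktriangleright_\gg a_k=0$ for all $a_k\in U(\gg)$. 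Faithfulness of $\blacktriangleright_\gg$ then forces $\sum_{\nu,\mu}(b_{\nu\mu}e_\nu)\sharp\pi_\mu=0$ in $H_\gg$, and since $\{\pi_\mu\}$, $\{e_\nu\}$ are bases, $b_{\nu\mu}=0$ for all $\nu,\mu$. As $(a_1,\dots,a_{k-1})$ was arbitrary, each $t_{\nu\mu}\in I'^{(k-1)}=I^{(k-1)}$ by induction, so $t_{\nu\mu}$ maps to $0$ in $H^{\otimes_A(k-1)}$; hence $r$ maps to $0$ in $H^{\otimes_A k}$, i.e. $r\in I^{(k)}$. The base case $k=1$ says $h\blacktriangleright_\gg a=0$ for all $a$ implies $h=0$, which I would obtain from the fact that under $H_\gg\cong\hat{A}_n$ the symmetrization map $\xi$ is an $H_\gg$-module isomorphism from the Fock action $\triangleright$ on $S(\gg)$ to $\blacktriangleright_\gg$ on $U(\gg)$ (both being the cyclic module $H_\gg/\Ker\epsilon$, as $\xi(h\triangleright 1)=\epsilon(h)=h\blacktriangleright_\gg 1$), together with the faithfulness of the Fock action of $\hat{A}_n$ on polynomials (\cite{halg}).

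The combinatorial skeleton above is routine; the difficulty I anticipate lies in the two inputs it relies on. First, the faithfulness of $\blacktriangleright_\gg$ must be used in the completed setting — one needs that elements of $H_\gg$ that are of infinite order in the $\partial^\mu$ still act nontrivially on $U(\gg)$. Second, all tensor products here are completed, so the PBW expansion of $r$, the ``strip off the last leg'' manipulation, and the passage $b_{\nu\mu}=0$ must be legitimate for profinite sums; the clean way is to carry out the argument componentwise along the cofiltrations, using that $\blacktriangleright_\gg$, $\mu$ and the basis projections are cofiltered maps, exactly as in the discussion of completions above and in \cite{halg}.
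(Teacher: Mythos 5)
The paper does not actually prove this theorem here---it imports it as \cite[Theorem~4]{halg}---so your argument has to stand on its own. Its architecture is the natural one: the inclusion $I^{(k)}\subseteq I'^{(k)}$ by checking that $I'^{(k)}$ is a right ideal containing the generators $g_i(u)$ (this part is correct as written), and the converse by induction on $k$ with base case the faithfulness of $\blacktriangleright_\gg$, which you correctly transport via $\xi$ to the faithfulness of the Fock representation of $\hat{A}_n$ on polynomials.

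The inductive step, however, contains a genuine error. Having written $r=\sum_{\nu,\mu}t_{\nu\mu}\otimes(e_\nu\sharp\pi_\mu)$ and derived $\sum_{\nu,\mu}(b_{\nu\mu}e_\nu)\sharp\pi_\mu=0$, you conclude $b_{\nu\mu}=0$ ``since $\{e_\nu\}$ is a basis''. But $b_{\nu\mu}\in U(\gg)$ is not a scalar: $\sum_\nu b_{\nu\mu}e_\nu$ is a sum of \emph{products} in $U(\gg)$, which can vanish by cancellation with all $b_{\nu\mu}\neq 0$. Indeed your conclusion ``each $t_{\nu\mu}\in I^{(k-1)}$, hence $r$ maps to $0$'' would prove $I^{(k)}\subseteq I^{(k-1)}\otimes H$, which is false already for $k=2$: the generator $r=\beta(\hx_\lambda)\otimes 1-1\otimes\alpha(\hx_\lambda)$ of $I^{(2)}$ has PBW coefficients $t_{0,0}=\beta(\hx_\lambda)$, $t_{\lambda,0}=-1$, neither of which lies in $I^{(1)}=0$, while for $a_1=a$ one gets $b_{0,0}=a\hx_\lambda$, $b_{\lambda,0}=-a$ and $\sum_\nu b_{\nu,0}e_\nu=a\hx_\lambda-a\hx_\lambda=0$ precisely by such a cancellation. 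The missing idea is a normalization of the last tensor slot \emph{before} expanding: modulo the generator $g_{k-1}(e_\nu)$ of $I^{(k)}$ one has $t\otimes(e_\nu\sharp\pi_\mu)\equiv\big(1^{\otimes(k-2)}\otimes\beta(e_\nu)\otimes 1\big)\big(t\otimes(1\sharp\pi_\mu)\big)$, so every class has a representative $r'=\sum_\mu t'_\mu\otimes(1\sharp\pi_\mu)$ with only the momentum part $\hat{S}(\gg^*)$ in the last leg. For such a representative the quantities $b'_\mu$ really are coefficients in the tensor decomposition $U(\gg)\hat\otimes\hat{S}(\gg^*)$, so faithfulness does force $b'_\mu=0$ for all choices of $a_1,\dots,a_{k-1}$; then the induction hypothesis gives $t'_\mu\in I^{(k-1)}$ and $r\equiv r'\in I^{(k-1)}\otimes H\subseteq I^{(k)}$. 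Your closing remarks about carrying everything out componentwise along the cofiltrations apply verbatim to this corrected version and are indeed needed, since all the sums involved are profinite.
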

For an arbitrary bialgebroid one can easily show that $I^{(k)}\subset I'^{(k)}$, but the converse inclusion is not present in general. \begin{Corollary}\label{cor:FDFimpliesDt}
Let $H = H_\gg$. For any $\mathcal{F}\in H\hat\otimes_{S(\gg)} H$ satisfying $\mathcal{F} I_0 = I_{U(\gg)}$ and having an inverse $\mathcal{F}^{-1}\in H\hat\otimes_{U(\gg)} H$, the satisfaction of the identity $(\Delta_H)_{\mathcal{F}}(h) = \mathcal{F}^{-1}\Delta_H(h)\mathcal{F} + I_{U(\gg)}$ for all $h\in H$ implies the cocycle condition~\eqref{eq:2coc}. In particular, this holds for the undeformed case, where $H = \hat{A}_n$ and the coproduct is $\Delta_0$ as in~\eqref{eq:Delta0}.
\end{Corollary}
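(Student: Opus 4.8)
The plan is to deduce the cocycle condition~\eqref{eq:2coc} from the nondegeneracy statement of Theorem~\ref{th:th4halg}, specialized to three tensor factors, using the explicit formulas already recorded in Remark~\ref{rem:cocyccoass}. First I would unpack the hypothesis. The given data -- $\mathcal{F}\in H\hat\otimes_{S(\gg)}H$ with $\mathcal{F} I_0=I_{U(\gg)}$ and an inverse $\mathcal{F}^{-1}\in H\hat\otimes_{U(\gg)}H$ -- places us in the setting of Theorem~\ref{th:cocycimpltw} together with Remark~\ref{rem:invcoc}: left multiplication by $\mathcal{F}$ realizes an invertible $\mathcal{F}^\sharp$, so $h\mapsto\mathcal{F}^{-1}\Delta_H(h)\mathcal{F}$ makes sense modulo $I_{U(\gg)}$. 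The hypothesis that this is $(\Delta_H)_{\mathcal{F}}$, the coproduct of a left bialgebroid over the $\star$-deformed base, carries with it in particular that the base product $\star$ is associative (so that the $\star$-base, and $(\Delta_H)_{\mathcal{F}}$ itself, make sense); this is all we shall use. By the computation displayed in Remark~\ref{rem:cocyccoass} -- comparing $(a\star b)\star c$ with $a\star(b\star c)$ -- associativity of $\star$ is precisely the identity
$$\mu_2\big((\mathcal{F}_{3a}-\mathcal{F}_{3b})(\blacktriangleright_\gg\otimes\blacktriangleright_\gg\otimes\blacktriangleright_\gg)(a\otimes b\otimes c)\big)=0 \qquad\text{for all }a,b,c\in U(\gg),$$
where $\mathcal{F}_{3a}=[(\Delta_H\otimes_A\id)(\mathcal{F})](\mathcal{F}\otimes_A 1)$ and $\mathcal{F}_{3b}=[(\id\otimes_A\Delta_H)(\mathcal{F})](1\otimes_A\mathcal{F})$ are the two sides of~\eqref{eq:2coc} lifted to a common representative in $H\hat\otimes H\hat\otimes H$, and $\mu_2$ is the iterated multiplication of the base.

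Second, I would read this identity as saying that $r:=\mathcal{F}_{3a}-\mathcal{F}_{3b}$ lies in the right ideal $I'^{(3)}$ of Theorem~\ref{th:th4halg} (the case $k=3$), namely the elements of $H\hat\otimes H\hat\otimes H$ killed by $\mu_2\circ(\blacktriangleright_\gg\otimes\blacktriangleright_\gg\otimes\blacktriangleright_\gg)$ against every test triple. By that theorem $I'^{(3)}=I^{(3)}=\Ker\big(H\hat\otimes H\hat\otimes H\to H\hat\otimes_A H\hat\otimes_A H\big)$, so $r\in I^{(3)}$; that is, $\mathcal{F}_{3a}=\mathcal{F}_{3b}$ in $H\hat\otimes_A H\hat\otimes_A H$, which is exactly~\eqref{eq:2coc}. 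Two routine points require care: checking that the above identity indeed holds for the full range of test triples demanded in Theorem~\ref{th:th4halg}, and keeping the completed tensor products and profinite sums honest throughout (all the maps involved -- $\Delta_H$, the base and total multiplications, and $\blacktriangleright_\gg$ -- are cofiltered, so this is bookkeeping). For the final assertion I would either rerun the argument verbatim with $H=\hat{A}_n$ and $\Delta=\Delta_0$ over the commutative base $S(\gg)$, invoking the undeformed instance of the nondegeneracy (the $\gg$ abelian case of Theorem~\ref{th:th4halg}, also stated directly for the undeformed Heisenberg algebra in Remark~\ref{rem:cocyccoass}), or simply observe that $(\hat{A}_n,\Delta_0)$ is the $\gg$ abelian specialization of $(H_\gg,\Delta_H)$.

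The essential difficulty has been front-loaded into Theorem~\ref{th:th4halg}: for a general bialgebroid the pairing $\blacktriangleright\otimes\blacktriangleright\otimes\blacktriangleright$ is degenerate in its first argument, so associativity of $\star$ (equivalently coassociativity of the twisted coproduct) yields only the ``smeared'' identity above and not the cocycle condition itself -- which is precisely why this corollary is special to $H_\gg$ and to the undeformed Weyl algebra. Granting Theorem~\ref{th:th4halg}, the proof is then an assembly of that theorem with Theorem~\ref{th:cocycimpltw} and the formulas of Remark~\ref{rem:cocyccoass}; the only genuinely new thing to verify is that the hypothesis delivers the ``smeared'' identity for enough arguments.
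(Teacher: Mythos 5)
Your proposal is correct and follows essentially the same route as the paper: the paper's proof likewise reduces the claim, via the computation at the end of Remark~\ref{rem:cocyccoass}, to the vanishing of $\mu_2\big((\mathcal{F}_{3a}-\mathcal{F}_{3b})(\blacktriangleright_\gg\otimes\blacktriangleright_\gg\otimes\blacktriangleright_\gg)(a\otimes b\otimes c)\big)$ for all base elements $a$, $b$, $c$, and then invokes the nondegeneracy Theorem~\ref{th:th4halg} (with $k=3$) to conclude $\mathcal{F}_{3a}=\mathcal{F}_{3b}$ in $H\hat\otimes_A H\hat\otimes_A H$, noting that in the undeformed case the nondegeneracy amounts to the faithfulness of the Fock representation of the Weyl algebra on polynomials.
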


\begin{proof}This follows from the coassociativity $\Delta_H$ by the reasoning at the end of the Remark~\ref{rem:cocyccoass} using that the nondegeneracy Theorem~\ref{th:th4halg} holds in the deformed case. Of course, the nondegeneracy is well known in the undeformed case. For example, if $k=2$, then up to the issue of completions, it boils down to the basic fact that the Weyl algebra as defined by generators and relations is {\em faithfully} represented on the space of polynomials.
\end{proof}

In this paper, we need the corollary in the undeformed case, as we study below the twists $\mathcal{F}_l$,~$\mathcal{F}_r$ on the undeformed $H = \hat{A}_n$. If~$\mathcal{F}$ is a candidate for a twist on undeformed~$H$ then the cocycle condition for $\mathcal{F}^{-1}$ (as a twist on deformed $H_\gg$) may be derived using Theorem~\ref{th:th4halg} in the nontrivial deformed case. This is useful as the cocycle condition for some examples of~$\mathcal{F}$ or~$\mathcal{F}^{-1}$ is studied in the literature without previous knowledge of the cocycle condition for~$\mathcal{F}$.

In~\cite{scopr}, we proved another formula for a twist deforming the coproduct $\Delta_0$ on $\hat{A}_n$ into the coproduct for $H_\gg$ (although without bialgebroid formalism) which is however less explicit:
\begin{gather}\label{eq:Fc}
\mathcal{F}_c = \sum_{i_1,\dots,i_n = 0}^\infty \frac{x_1^{i_1}\cdots x_n^{i_n}\otimes 1}{i_1!\cdots i_n!}\prod_{\nu = 1}^n\big[\big(\Delta_{\hat{S}(\gg)} - \Delta_0\big)(\partial^\nu)\big]^{i_\nu}
\end{gather}
as it is a series, where each summand involves $\Delta_{\hat{S}(\gg)}$ which is itself a series and not very explicit. Symbolically, using the normal ordering ($x$-s to the left, $\partial$-s to the right) operation ${:} \ {:}$ one can write this formula as the normally ordered exponential \begin{gather*}
\mathcal{F}_c = {:}\exp\left(\sum_{\mu = 1}^n (x_\mu\otimes 1)\big(\Delta_{\hat{S}(\gg^*)}-\Delta_0\big)(\partial^\mu)\right){:}.
\end{gather*}

\begin{Remark}[warning on variants of completions]\label{rem:wcompl}
The completions in~\cite{halg} are, roughly speaking, with respect to the cofiltrations on $U(\gg)^*$ and $\hat{S}(\gg^*)$ induced by duality from the standard filtrations on $U(\gg)$ and $S(\gg)$. This is {\em essentially different} from using the additional formal variable $h$ and $h$-adic completions as in Xu's work~\cite{xu}. Naively, to fit with his work, the Lie algebra generators (or equivalently the structure constants)
should be simply rescaled by the formal variable~$h$. For many simple purposes this gives an equivalent treatment to ours. The set of series which formally converge in two variants differs however. For the main results in the present article this is important. Namely, the twists $\mathcal{F}_l$, $\mathcal{F}_r$ in Section~\ref{s:newtwist} exist in both completions, the formulas as products of two exponentials make sense in our formalism, but these individual exponential factors do not exist in the $h$-adic completion, even after rescaling. Indeed, $\exp(\partial^\alpha\otimes x_\alpha)$ does not involve a small parameter (the reason is that $\partial^\alpha$ and $x_\alpha$ are dual and no rescaling could make their tensor product small) and is in fact related to an infinite-dimensional version of the canonical element, while (due cancellations in the expansion) the entire twist~$\mathcal{F}_l$ equals~$1$ plus a series of corrections involving the small parameter. Thus, unlike the exponential factors, the final result~$\mathcal{F}_l$ does exist in both formalisms and hence can be interpreted as defining a~deformation quantization in the sense used in Theorem~\ref{thm:defqinterp}.
\end{Remark}
\begin{Remark}The smash product algebra $U(\gg)\sharp_{{\boldsymbol \phi}}\hat{S}(\gg^*)$ can be equipped with another bialgebroid structure, namely over the commutative base algebra~$\hat{S}(\gg^*)$ and with the comultiplication~$\Delta'$ for which $\Delta'(u\sharp P) = (u_{(1)}\sharp 1)\otimes_{\hat{S}(\gg^*)}(u_{(2)}\sharp P)$ for $u\in U(\gg)$ and $P\in\hat{S}(\gg^*)$, where $u\mapsto u_{(1)}\otimes u_{(2)}$ denotes the (standard) cocommutative coproduct of $U(\gg)$. While our co\-pro\-duct~$\Delta$ takes values in a completed product over the noncommutative base, the coproduct $\Delta'$ is algebraic, taking values in the ordinary tensor product over the commutative base. Such scalar extensions over a~commutative base were known much before \cite[pp.~117--118]{Sweedler}. This bialgebroid does not fit the physical interpretation which we intended. The coproduct on~$\hat{S}(\gg^*)$ in our bialgebroid structure in this paper is dual to the algebra structure on $U(\gg)$ manifest in both algebroids and the coproduct on~$U(\gg)$ in the other bialgebroid structure is dual to the algebra structure on $\hat{S}(\gg^*)$ manifest in both algebroids. The two entire bialgebroids are mutually not dual in some sense standard for bialgebroids, although their factors $U(\gg)$ and $\hat{S}(\gg^*)$ are involved in (topological) Hopf algebra dualities.
\end{Remark}

\section{New twist}\label{s:newtwist}

In this section, we show that the two expressions $\mathcal{F}_l$ and $\mathcal{F}_r$ define (the same) twist of a completed Heisenberg double and that the twisted bialgebroid is $H_\gg$ from the previous sections.
\begin{Theorem}\label{thm:expadDelta}\quad
\begin{enumerate}\itemsep=0pt
\item[$(i)$] In symmetric ordering, the deformed coproduct
$\Delta_{\hat{S}(\gg^*)}$ on $\hat{S}(\gg^*)\cong U(\gg)^*$ is given by
\begin{gather*}
\Delta_{\hat{S}(\gg^*)}\partial^\mu = 1\otimes\partial^\mu + \partial^\alpha\otimes[\partial^\mu,\hx_\alpha] +\tfrac{1}{2} \partial^\alpha\partial^\beta\otimes [[\partial^\mu,\hx_\alpha],\hx_\beta] +\cdots,
\end{gather*}
where the right-hand side, though an element of~$\hat{S}(\gg^*)\hat\otimes\hat{S}(\gg^*)$, is calculated in the bigger algebra $H\hat\otimes H$
$($the completed tensor product over the ground field$)$. The same equation written in a symbolic form is
\begin{gather}\label{eq:expad1}
\Delta_{\hat{S}(\gg^*)}\partial^\mu = \exp(\partial^\alpha\otimes \operatorname{ad} (-\hx_\alpha))
(1\otimes\partial^\mu) = \exp(\operatorname{ad}(-\partial^\alpha\otimes \hx_\alpha)) (1\otimes\partial^\mu).
\end{gather}
$($The rightmost equality follows by noting that $[\partial^\alpha,1] = 0$.$)$

\item[$(ii)$] More generally, for $P\in \hat{S}(\gg^*)$ we have
\begin{gather*}
\Delta_{\hat{S}(\gg^*)} (P) = \exp(\partial^\alpha\otimes \operatorname{ad} (-\hx_\alpha))(1\otimes P) = \exp(\operatorname{ad}(-\partial^\alpha\otimes \hx_\alpha)) (1\otimes P).
\end{gather*}
\end{enumerate}
\end{Theorem}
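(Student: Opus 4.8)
The plan is to prove the more general statement~(ii) directly and then read off~(i) as the case $P=\partial^\mu$. Throughout write $\Omega=\partial^\alpha\otimes\hx_\alpha\in H\hat\otimes H$ and $\mathcal{G}(P):=\exp(\partial^\alpha\otimes\operatorname{ad}(-\hx_\alpha))(1\otimes P)$, and use the canonical nondegenerate pairing $\langle\,,\,\rangle\colon\hat{S}(\gg^*)\times U(\gg)\to\genfd$ identifying $\hat{S}(\gg^*)$ with $U(\gg)^*$ via $\xi^T$, which is computed by $\langle P,u\rangle=\epsilon(P\blacktriangleright_\gg u)$ with $\epsilon$ the counit of $U(\gg)$. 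By construction $\Delta_{\hat{S}(\gg^*)}$ is dual to the multiplication of $U(\gg)$, so $\langle\Delta_{\hat{S}(\gg^*)}(P),u\otimes v\rangle=\langle P,uv\rangle$ for all $u,v\in U(\gg)$. Hence, once we know that $\mathcal{G}(P)$ actually lies in $\hat{S}(\gg^*)\hat\otimes\hat{S}(\gg^*)$, it suffices to prove $\langle\mathcal{G}(P),u\otimes v\rangle=\langle P,uv\rangle$ for all $u,v$; nondegeneracy of the pairing on the completed tensor squares then gives $\mathcal{G}(P)=\Delta_{\hat{S}(\gg^*)}(P)$.

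First I would unwind $\operatorname{ad}(-\hx_\alpha)$. The smash-product multiplication in $H_\gg$ gives $[\partial^\mu,\hx_\alpha]={\boldsymbol\phi}(\hx_\alpha)(\partial^\mu)\in\hat{S}(\gg^*)$, and since $\hx_\alpha=x_\tau\phi^\tau_\alpha$, commuting any element of $\hat{S}(\gg^*)$ past $\hx_\alpha$ lands again in $\hat{S}(\gg^*)$; thus $\operatorname{ad}(-\hx_\alpha)$ restricts on $\hat{S}(\gg^*)$ to the continuous derivation ${\boldsymbol\phi}(\hx_\alpha)$. By induction every iterated bracket $\operatorname{ad}(-\hx_{\alpha_1})\cdots\operatorname{ad}(-\hx_{\alpha_N})(P)$ stays in $\hat{S}(\gg^*)$, so $\mathcal{G}(P)\in\hat{S}(\gg^*)\hat\otimes\hat{S}(\gg^*)$ and
\begin{gather*}
\mathcal{G}(P)=\sum_{N\ge 0}\frac{1}{N!}\sum_{\alpha_1,\dots,\alpha_N}\partial^{\alpha_1}\cdots\partial^{\alpha_N}\otimes\big({\boldsymbol\phi}(\hx_{\alpha_1})\circ\cdots\circ{\boldsymbol\phi}(\hx_{\alpha_N})\big)(P).
\end{gather*}
Moreover $[\partial^\alpha,1]=0$ shows that on $1\otimes\hat{S}(\gg^*)$ the operator $\exp(\partial^\alpha\otimes\operatorname{ad}(-\hx_\alpha))$ coincides with $\exp(\operatorname{ad}(-\Omega))=e^{-\Omega}(\,\cdot\,)e^{\Omega}$; this is the rightmost equality in~(i) and also shows $P\mapsto\mathcal{G}(P)$ is the continuous algebra homomorphism $P\mapsto e^{-\Omega}(1\otimes P)e^{\Omega}$ (which gives an alternative reduction of~(ii) to~(i): both $\mathcal{G}$ and $\Delta_{\hat{S}(\gg^*)}$ are continuous algebra maps agreeing on the topological generators $\partial^\mu$).

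Next comes the pairing computation. Pair the displayed expansion with $u\otimes v$. Since $\langle\partial^{\alpha_1}\cdots\partial^{\alpha_N},u\rangle$ depends only on the commutative monomial $\partial^{\alpha_1}\cdots\partial^{\alpha_N}$, it is symmetric in the $\alpha_i$, so we may replace the string ${\boldsymbol\phi}(\hx_{\alpha_1})\circ\cdots\circ{\boldsymbol\phi}(\hx_{\alpha_N})$ by its average over $\Sigma(N)$, which --- because ${\boldsymbol\phi}$ is an algebra anti-homomorphism --- equals ${\boldsymbol\phi}\big(\xi(x_{\alpha_1}\cdots x_{\alpha_N})\big)$. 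The profinite sum $\sum_N\frac{1}{N!}\sum_{\alpha_1,\dots,\alpha_N}\langle\partial^{\alpha_1}\cdots\partial^{\alpha_N},u\rangle\,\xi(x_{\alpha_1}\cdots x_{\alpha_N})$ is the image under $\xi$ of the Taylor expansion of $\xi^{-1}(u)$, hence equals $u$, so linearity of ${\boldsymbol\phi}$ gives $\langle\mathcal{G}(P),u\otimes v\rangle=\langle{\boldsymbol\phi}(u)(P),v\rangle$. On the other side, using that $\blacktriangleright_\gg$ is a left action and $u\blacktriangleright_\gg v=uv$ for $u,v\in U(\gg)$,
\begin{gather*}
\langle P,uv\rangle=\epsilon\big(P\blacktriangleright_\gg(uv)\big)=\epsilon\big((Pu)\blacktriangleright_\gg v\big);
\end{gather*}
expanding the product $Pu$ in $H_\gg$ as $\sum u_{(1)}\,{\boldsymbol\phi}(u_{(2)})(P)$, using that $u_{(1)}\blacktriangleright_\gg(-)$ is left multiplication while $\epsilon$ is multiplicative on $U(\gg)$, and then the counit axiom, this collapses to $\epsilon\big(({\boldsymbol\phi}(u)(P))\blacktriangleright_\gg v\big)=\langle{\boldsymbol\phi}(u)(P),v\rangle$. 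Thus $\langle\mathcal{G}(P),u\otimes v\rangle=\langle P,uv\rangle$ for all $u,v$, which proves~(ii); statement~(i) is the case $P=\partial^\mu$, where expanding the exponential to second order produces the corrections $\partial^\alpha\otimes[\partial^\mu,\hx_\alpha]$ and $\tfrac12\partial^\alpha\partial^\beta\otimes[[\partial^\mu,\hx_\alpha],\hx_\beta]$ after relabelling the symmetric $\partial$-indices, together with the operator rewriting from the previous paragraph.

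The genuinely delicate point is not the algebra --- the core computation above is a few lines --- but keeping the completions honest: one must know that $\langle\,,\,\rangle$ extends to a nondegenerate pairing of the completed tensor squares (so that an element of $\hat{S}(\gg^*)\hat\otimes\hat{S}(\gg^*)$ is determined by its values on the $u\otimes v$), that $e^{\pm\Omega}$ and the defining series for $\mathcal{G}(P)$ converge as profinite sums in the cofiltered completion used here and not merely in the $h$-adic one (cf.\ Remark~\ref{rem:wcompl}), and that $\Delta_{\hat{S}(\gg^*)}$ really is dual to multiplication in $U(\gg)$ for this pairing. All of this rests on the completion framework of~\cite{halg,stojicPhD}, about which, as announced, we remain somewhat cavalier.
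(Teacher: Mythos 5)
Your argument is correct, but it is a genuinely different route from the paper's. The paper disposes of part~(i) by citing the earlier work \cite{scopr}, where the expansion of $\Delta_{\hat{S}(\gg^*)}\partial^\mu$ in iterated commutators is derived, and then proves part~(ii) only by the extension argument you mention in passing: conjugation by $e^{\pm\Omega}$ and $\Delta_{\hat{S}(\gg^*)}$ are both continuous algebra maps, so agreement on the generators $\partial^\mu$ propagates to all of $\hat{S}(\gg^*)$ (with the order-counting remark to justify convergence of the substituted series). You instead give a self-contained duality proof of~(ii) directly: identify $\ad(-\hx_\alpha)|_{\hat{S}(\gg^*)}$ with the derivation ${\boldsymbol \phi}(\hx_\alpha)$ coming from the smash-product relations, symmetrize the iterated composites into ${\boldsymbol \phi}(\xi(x_{\alpha_1}\cdots x_{\alpha_N}))$ using that ${\boldsymbol \phi}$ is an antihomomorphism, resum via the Taylor formula to get $\langle\mathcal{G}(P),u\otimes v\rangle=\langle{\boldsymbol \phi}(u)(P),v\rangle$, and match this against $\langle P,uv\rangle$ computed from the smash product and the counit axiom. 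I checked the two pairing computations and they are right; the orderings work out because the $\partial$-monomial coefficients are symmetric, and the step $\epsilon\big((Pu)\blacktriangleright_\gg v\big)=\langle{\boldsymbol \phi}(u)(P),v\rangle$ is a clean use of $Pu=\sum u_{(1)}\sharp\,{\boldsymbol \phi}(u_{(2)})(P)$. What your approach buys is independence from the external reference and a conceptual explanation of why the formula holds (the exponential adjoint is literally the transpose of $\mu_{U(\gg)}$ under the symmetrization pairing); what it costs is reliance on the facts that $\langle P,u\rangle=\epsilon(P\blacktriangleright_\gg u)$ realizes the $\xi$-transported pairing and that this pairing is jointly nondegenerate on the completed tensor squares — both true in the framework of \cite{halg}, and you correctly flag them, but they deserve explicit citation since they are exactly where the Heisenberg-double structure enters. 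With those references supplied, your proof stands.
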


\begin{proof} Part (i) is proven in~\cite{scopr} (and used in~\cite{heisd}).

(ii) To extend the identity to the formal power series just notice that the conjugation is a~homomorphism of completed algebras $\hat{H}\otimes\hat{H}\to\hat{H}$(the usual non-completed tensor product of completions). More explicitly, the deformed coproduct of the $n$-th order monomial in $\partial$-s has all summands in order $n$ or higher. If in a formal series we replace each monomial with a formal series with
nondecreased minimal order then we obtain only finitely many summands of each finite order hence again a formal series.
\end{proof}

\begin{Corollary}\label{cor:expady}
In symmetric ordering, the deformed coproduct $\Delta_{\hat{S}(\gg^*)}$ on
$\hat{S}(\gg^*)\cong U(\gg)^*$ is also given by
\begin{gather}\label{eq:expad2}
\Delta_{\hat{S}(\gg^*)}(\partial^\mu) = \exp(\mathrm{ad}(\hy_\alpha\otimes\partial^\alpha))
(\partial^\mu\otimes 1)
\end{gather}
and more generally $\Delta_{\hat{S}(\gg^*)}(P) = \exp(\mathrm{ad}(\hy_\alpha\otimes\partial^\alpha))(P\otimes 1)$ for all $P\in\hat{S}(\gg^*)$.
\end{Corollary}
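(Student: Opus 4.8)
The plan is to derive this from Theorem~\ref{thm:expadDelta}. First I would reduce to the generators: since $\operatorname{ad}(\hy_\alpha\otimes\partial^\alpha)$ is a derivation of the completed algebra $H\hat\otimes H$, its exponential is an algebra endomorphism, so $P\mapsto\exp(\operatorname{ad}(\hy_\alpha\otimes\partial^\alpha))(P\otimes 1)$ is an algebra homomorphism on $\hat{S}(\gg^*)$; the series converges and distributes over profinite sums by the same degree count as in the proof of Theorem~\ref{thm:expadDelta}(ii) (the image of a $\partial$-monomial of order $n$ is a profinite sum of terms of order $\ge n$). As $\Delta_{\hat{S}(\gg^*)}$ is also an algebra homomorphism and the two maps agree on $1$, it suffices to check the asserted formula on the generators $\partial^\mu$; the general case then follows by multiplicativity.

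For $P=\partial^\mu$, Theorem~\ref{thm:expadDelta}(i) already gives $\Delta_{\hat{S}(\gg^*)}(\partial^\mu)=\exp(\operatorname{ad}(-\partial^\alpha\otimes\hx_\alpha))(1\otimes\partial^\mu)$, so the statement reduces to the identity
\begin{gather*}
\exp(\operatorname{ad}(\hy_\alpha\otimes\partial^\alpha))(\partial^\mu\otimes 1)=\exp(\operatorname{ad}(-\partial^\alpha\otimes\hx_\alpha))(1\otimes\partial^\mu)
\end{gather*}
in $H\hat\otimes H$. I would prove this by writing each side as a conjugation, $\exp(\operatorname{ad}(Z))(w)=e^{Z}we^{-Z}$ (an equality of profinite sums, once convergence is checked), so that the identity becomes the assertion that the element $T:=e^{\partial^\alpha\otimes\hx_\alpha}e^{\hy_\alpha\otimes\partial^\alpha}$ conjugates $\partial^\mu\otimes 1$ into $1\otimes\partial^\mu$ for every $\mu$. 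To verify this I would use three facts in the realization $H_\gg\cong\hat{A}_n$: the basic commutators $[\partial^\mu,\hx_\nu]$ and $[\partial^\mu,\hy_\nu]$, which in the realization are the entries of $\phi$ and $\tilde\phi$ and hence power series in the $\partial$'s alone (since the $\partial$'s commute with functions of the $\partial$'s); the commutativity $[\hx_\mu,\hy_\nu]=0$ of~\eqref{eq:commxy}; and the relation $\hy=\hx\,\OO^{-1}$ with $\OO=e^{\CC}$ from~\eqref{eq:phiphiO} and~\eqref{eq:yeqxO}. Introducing an interpolation parameter $s$, one differentiates $e^{s\,\partial^\alpha\otimes\hx_\alpha}(1\otimes\partial^\mu)e^{-s\,\partial^\alpha\otimes\hx_\alpha}$ and $e^{-s\,\hy_\alpha\otimes\partial^\alpha}(\partial^\mu\otimes 1)e^{s\,\hy_\alpha\otimes\partial^\alpha}$ in $s$ and, after converting the $\hy$-brackets into $\hx$-brackets via the matrix $\OO$, checks that the two one-parameter families solve the same linear equation with the same value at $s=0$. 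Conceptually this is just the fact that in a Heisenberg double conjugation by the canonical element implements the coproduct, with~\eqref{eq:expad1} and~\eqref{eq:expad2} expressing it in the two complementary polarizations.

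The main difficulty is bookkeeping and convergence rather than anything conceptual. By Remark~\ref{rem:wcompl} the individual exponentials $e^{\partial^\alpha\otimes\hx_\alpha}$ and $e^{\hy_\alpha\otimes\partial^\alpha}$ are ``large'' (infinite-dimensional versions of the canonical element) and live only in the finer completion, so the passage $\exp(\operatorname{ad}(Z))(w)=e^Zwe^{-Z}$, the splitting and recombining of exponentials, and the interchange of $d/ds$ with the series must all be justified at the level of the $\partial$-degree cofiltration, not as purely formal power-series manipulations; and the two sides arise from combinatorially different recursions — iterated brackets with $\hx$ versus with $\hy$ — so the technical heart is to show they resum to the same element, which genuinely uses $\tilde\phi=\phi\,\OO^{-1}$ together with $[\hx,\hy]=0$. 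An alternative, perhaps cleaner route that avoids the interpolation argument is to apply the algebra map $\Delta_H$ to the identity $\hy_\mu=\hx_\lambda(\OO^{-1})^\lambda_\mu$ holding in $H_\gg$, using $\Delta_H(\hx_\mu)=\hx_\mu\otimes 1$ and $\Delta_H(\hy_\mu)=1\otimes\hy_\mu$ (both immediate from $\hx_\mu=\alpha(\hx_\mu)$, $\hy_\mu=\beta(\hx_\mu)$ and $\Delta_H$ being a unital $U(\gg)$-bimodule algebra map), which yields a closed relation pinning down $\Delta_{\hat{S}(\gg^*)}(\partial^\gamma)$, and then verifying that the asserted formula solves it.
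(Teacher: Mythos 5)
Your reduction to the generators and the reformulation of the statement as the single conjugation identity
\begin{gather*}
\exp(\ad(\hy_\alpha\otimes\partial^\alpha))(\partial^\mu\otimes 1)=\exp(\ad(-\partial^\alpha\otimes\hx_\alpha))(1\otimes\partial^\mu)
\end{gather*}
are fine, and your route is genuinely different from the paper's, which deduces \eqref{eq:expad2} from \eqref{eq:expad1} by the symmetry $H(Z,W)=-H(-W,-Z)$ of the Hausdorff series (quoted from~\cite{scopr}) together with a count of how many partial derivatives are absorbed in each iterated commutator. The problem is the step where you verify the displayed identity. The two one-parameter families you propose to compare, $e^{s\,\partial^\alpha\otimes\hx_\alpha}(1\otimes\partial^\mu)e^{-s\,\partial^\alpha\otimes\hx_\alpha}$ and $e^{-s\,\hy_\alpha\otimes\partial^\alpha}(\partial^\mu\otimes 1)e^{s\,\hy_\alpha\otimes\partial^\alpha}$, take at $s=0$ the values $1\otimes\partial^\mu$ and $\partial^\mu\otimes 1$, which are \emph{distinct} elements of $H\hat\otimes H$ (they are the leading terms of the two sides and sit in opposite tensor factors). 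So ``same linear equation with the same value at $s=0$'' is unavailable: the flows start at different points and are only supposed to meet at $s=1$, and moreover they satisfy $A'=[A,\partial^\alpha\otimes\hx_\alpha]$ and $B'=[B,\hy_\alpha\otimes\partial^\alpha]$, which are \emph{different} linear equations. Converting $\hy$-brackets into $\hx$-brackets via $\OO$ acts on the bracket terms but cannot move $\partial^\mu$ from one tensor factor to the other, so this is not a bookkeeping issue; the coincidence of the two limits at $s=1$ is exactly the nontrivial content of the corollary and needs a substantive argument (the paper's Hausdorff-symmetry argument, or an explicit resummation of both sides to $\Delta_{\hat{S}(\gg^*)}(\partial^\mu)$).

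Your fallback via applying $\Delta_H$ to $\hy_\mu=\hx_\lambda(\OO^{-1})^\lambda_\mu$ does not close the gap either: $\Delta_H$ takes values in $H\hat\otimes_{U(\gg)}H$, hence determines elements only modulo the right ideal $I_{U(\gg)}$, whereas \eqref{eq:expad2} is asserted --- and later used in Lemma~\ref{lem:DeltaFd}, where one conjugates the representatives $\tilde{\mathcal{F}}_r$ in $H\hat\otimes H$ before passing to any quotient --- as an identity in the honest completed tensor product over the ground field (see the explicit wording in Theorem~\ref{thm:expadDelta}(i)). At best that route pins down the class of the right-hand side modulo $I_{U(\gg)}$, which is strictly weaker than what is needed. (A side remark: the sign of the exponent in \eqref{eq:expad2} is not consistent with \eqref{eq:sen2} and with the expansion displayed at the end of the paper's own proof; your element $T$ inherits this ambiguity, but that is an issue with the source statement rather than with your argument.)
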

\begin{proof} It is known~\cite{scopr} that the deformed coproduct $\Delta_{\hat{S}(\gg^*)}(\partial^\mu)\in \hat{S}(\gg^*)\hat\otimes\hat{S}(\gg^*)$ has the same symmetries as the Hausdorff formula $H(Z,W) = -H(-W,-Z)$; thus if we interchange the tensor factors in the coproduct and multiply each partial derivative with~$-1$ then we get the same coproduct with the overall minus sign. This rule of course applies only when the coproduct is written manifestly in terms of tensor products of formal power series in partial derivatives. Changing the sign of each partial derivative in the realizations~$\hat{x}_\alpha$ manifest in~(\ref{eq:expad1}) is equivalent to the change of~$\phi$ to~$\tilde\phi$ (because partial derivatives are contracted to the structure constants so it is the same as change of the sign in the structure constants), that is $\hat{x}_\alpha$ to $\hat{y}_\alpha$. However, the commutators like $[\partial^\mu,\hx_\alpha]$ when calculated have a different number of partials in each monomial than the raw expression. Indeed, one partial derivative and one power of $x$ (recall that $\hx_\rho = x_\sigma\phi^\sigma_\rho$ where $\phi^\sigma_\rho$ involves partial derivatives) drop out when commuting; similarly in each of the further commutators one power of~$x$ and one of the partial derivatives within the $\tilde\phi$-s drops out. As the number of commutators in each of the left tensor factors in~(\ref{eq:expad1}) is by one less than the number of partial derivatives in the raw tensor product expression, together with overall minus sign it amounts to no change in sign besides the change accounted in $\hx_\mu\mapsto\hy_\mu$. Thus
\begin{gather*} \Delta_{\hat{S}(\gg^*)} \partial^\mu = \partial^\mu\otimes 1 +
[\partial^\mu,\hat{y}_\alpha]\otimes \partial^\alpha + \frac{1}{2}
[[\partial^\mu,\hat{y}_\alpha],\hat{y}_\beta]\otimes \partial^\alpha\partial^\beta + \cdots.
\end{gather*}
The proof that the result extends to general $P\in\hat{S}(\gg^*)$ is completely analogous to the proof of part (ii) in Theorem~\ref{thm:expadDelta}.
\end{proof}

\begin{Definition} Denote
\begin{gather}\label{eq:eetwist}
\mathcal{F}_l = \exp(-\partial^\rho\otimes x_\rho)\exp(\partial^\sigma\otimes\hx_\sigma),\\
\label{eq:eertwist}
\mathcal{F}_R = \exp(-x_\rho\otimes\partial^\rho) \exp(\hy_\sigma\otimes\partial^\sigma)
\end{gather}
understood as elements in $H\hat\otimes_{S(\gg)} H$ with $\mathcal{F}^{-1}_L, \mathcal{F}^{-1}_R\in H\hat\otimes_{U(\gg)}H$. The right-hand sides of the formulas~(\ref{eq:eetwist}), (\ref{eq:eertwist}) understood as elements in $H\hat\otimes H$ define $\tilde{\mathcal{F}}_l,\tilde{\mathcal{F}}_r\in H\hat\otimes H$ with explicit inverses $\tilde{\mathcal{F}}_l^{-1} = \exp(-\partial^\sigma\otimes\hx_\sigma)\exp(\partial^\rho\otimes x_\rho)$, $\tilde{\mathcal{F}}_r^{-1} = \exp(-\hy_\sigma\otimes\partial^\sigma)\exp(x_\rho\otimes\partial^\rho)$.
\end{Definition}

Clearly, $\mathcal{F}_l = \tilde{\mathcal{F}}_l+I_0$, $\mathcal{F}_r = \tilde{\mathcal{F}}_r+I_0$. We define also $\mathcal{F}_l^{-1} := \tilde{\mathcal{F}}_l^{-1}+I_{U(\gg)}$, $\mathcal{F}_r^{-1} := \tilde{\mathcal{F}}_r^{-1}+I_{U(\gg)}$ (see~(\ref{eq:IA})), but at this point we can not yet claim that they are twist inverses in the sense of Remark~(\ref{rem:invcoc}) as the expressions $\mathcal{F}_l\mathcal{F}_l^{-1}$ etc.\ should be well defined what is proven only below.

\begin{Lemma}\label{lem:DeltaFd}
$\Delta_{\hat{S}(\gg^*)}\partial^\mu = \tilde{\mathcal{F}}_l^{-1}\Delta_0(\partial^\mu)\tilde{\mathcal{F}}_l = \tilde{\mathcal{F}}_r^{-1}\Delta_0(\partial^\mu)\tilde{\mathcal{F}}_r$ for $\mu = 1,\dots, n$. More generally, for any $P\in\hat{S}(\gg^*)$ $($formal power series in $\partial^1,\dots,\partial^n)$, $\Delta_{\hat{S}}(\gg^*) P = \tilde{\mathcal{F}}_l^{-1}\Delta_0(P)\tilde{\mathcal{F}}_l = \tilde{\mathcal{F}}_r^{-1}\Delta_0(P)\tilde{\mathcal{F}}_r$.
\end{Lemma}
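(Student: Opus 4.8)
The plan is to reduce the general statement about $P \in \hat{S}(\gg^*)$ to the case of the generators $\partial^\mu$, and then to prove the generator case by directly expanding the conjugation $\tilde{\mathcal{F}}_l^{-1}\Delta_0(\partial^\mu)\tilde{\mathcal{F}}_l$ using the Hadamard (exponential-of-adjoint) lemma, and comparing with the formula for $\Delta_{\hat{S}(\gg^*)}\partial^\mu$ established in Theorem~\ref{thm:expadDelta}. Recall $\tilde{\mathcal{F}}_l = \exp(-\partial^\rho\otimes x_\rho)\exp(\partial^\sigma\otimes\hx_\sigma)$, so
\begin{gather*}
\tilde{\mathcal{F}}_l^{-1}\Delta_0(\partial^\mu)\tilde{\mathcal{F}}_l = \exp(-\operatorname{ad}(\partial^\sigma\otimes\hx_\sigma))\exp(\operatorname{ad}(\partial^\rho\otimes x_\rho))\big(\Delta_0(\partial^\mu)\big)
\end{gather*}
in $H\hat\otimes H$. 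First I would observe that $\Delta_0(\partial^\mu) = 1\otimes\partial^\mu + \partial^\mu\otimes 1$ and that $\exp(\operatorname{ad}(\partial^\rho\otimes x_\rho))$ kills the $\partial^\mu\otimes 1$ summand (since $\partial^\rho\otimes x_\rho$ commutes with $\partial^\mu\otimes 1$) and sends $1\otimes\partial^\mu$ to $1\otimes\partial^\mu + \partial^\mu\otimes 1$ (using $[\partial^\rho\otimes x_\rho,\, 1\otimes\partial^\mu] = -\partial^\mu\otimes 1$ and higher brackets vanishing). Hence the first conjugation exactly cancels the primitive part, leaving $\exp(-\operatorname{ad}(\partial^\sigma\otimes\hx_\sigma))(1\otimes\partial^\mu)$, which by the rightmost form of \eqref{eq:expad1} in Theorem~\ref{thm:expadDelta}(i) is precisely $\Delta_{\hat{S}(\gg^*)}\partial^\mu$. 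This handles $\tilde{\mathcal{F}}_l$; for $\tilde{\mathcal{F}}_r$ the argument is the mirror image, using Corollary~\ref{cor:expady} (the $\hat{y}$-version of the coproduct formula, with the factors interchanged) in place of Theorem~\ref{thm:expadDelta}(i).

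Next I would pass from the generators to an arbitrary $P$. Since $\Delta_0$ is an algebra homomorphism and $\Delta_{\hat{S}(\gg^*)}$ is an algebra homomorphism (being the transpose of the multiplication on $U(\gg)$), and since conjugation by a fixed invertible element $h \mapsto \tilde{\mathcal{F}}_l^{-1} h \tilde{\mathcal{F}}_l$ is an algebra homomorphism $H\hat\otimes H \to H\hat\otimes H$, the identity $\Delta_{\hat{S}(\gg^*)}(hk) = \tilde{\mathcal{F}}_l^{-1}\Delta_0(hk)\tilde{\mathcal{F}}_l$ follows from the same identity for $h$ and for $k$; by induction it holds on all polynomials in the $\partial^\mu$. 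To upgrade to formal power series $P\in\hat{S}(\gg^*)$ I would invoke exactly the continuity/cofiltration argument used in the proof of Theorem~\ref{thm:expadDelta}(ii): both sides are cofiltered maps into the completed tensor product (conjugation by $\tilde{\mathcal{F}}_l$ is a continuous algebra automorphism of $H\hat\otimes H$, and $\Delta_{\hat S(\gg^*)}$ raises order), so agreement on a dense polynomial subalgebra forces agreement on the completion.

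The main obstacle, and the only point requiring genuine care, is the bookkeeping in the adjoint expansions: the brackets $\operatorname{ad}(\partial^\sigma\otimes\hx_\sigma)$ and $\operatorname{ad}(\partial^\rho\otimes x_\rho)$ do not commute with each other, so one cannot naively combine the two exponentials, and one must verify that the first (the $x$-valued) conjugation acts on $\Delta_0(\partial^\mu)$ exactly as a ``shift'' that produces the primitive coproduct and nothing else — this is where the fact that $x_\rho$ and $\partial^\mu$ are dual (so that $[\partial^\rho\otimes x_\rho,\,1\otimes\partial^\mu]$ is a central element $-\partial^\mu\otimes 1$ of the relevant subalgebra) is essential, and it is precisely the subtlety flagged in Remark~\ref{rem:wcompl} about the individual exponential factors not living in the $h$-adic completion. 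I also need the bracket computations to take place legitimately in the ambient algebra $H\hat\otimes H$ rather than in $\hat{S}(\gg^*)\hat\otimes\hat{S}(\gg^*)$, since intermediate terms involve $\hx_\alpha$ and $x_\rho$; but this is exactly the framework already set up in Theorem~\ref{thm:expadDelta}(i), so no new completion issues arise. The $\tilde{\mathcal{F}}_r$ case is then formally dual and introduces no additional difficulty.
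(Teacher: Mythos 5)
Your proposal is correct and takes essentially the same route as the paper: the paper writes both $\Delta_{\hat{S}(\gg^*)}\partial^\mu$ and $\Delta_0\partial^\mu$ as conjugates of $1\otimes\partial^\mu$ (resp.\ $\partial^\mu\otimes 1$) via the Hadamard formula and compares the deformed and undeformed expressions, which is exactly your two-step conjugation, and it likewise extends to general $P$ by the continuity/cofiltration argument of Theorem~\ref{thm:expadDelta}(ii). The only slip is in your intermediate wording: since $[\partial^\rho\otimes x_\rho,\,1\otimes\partial^\mu]=-\partial^\mu\otimes 1$, the inner conjugation \emph{fixes} $\partial^\mu\otimes 1$ (it does not kill it) and sends $1\otimes\partial^\mu$ to $1\otimes\partial^\mu-\partial^\mu\otimes 1$, so that the sum is $1\otimes\partial^\mu$ exactly as you conclude.
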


\begin{proof}
By the Hadamard's formula $\exp(A)B\exp(-A) = \exp(\ad A)(B)$, (\ref{eq:expad1}) and~(\ref{eq:expad2}) read
\begin{gather}\label{eq:sen1}
\Delta_{\hat{S}(\gg^*)}\partial^\mu = \exp(-\partial^\rho \otimes\hx_\rho)(1\otimes\partial^\mu)\exp(\partial^\sigma\otimes\hx_\sigma),\\
\label{eq:sen2}
\Delta_{\hat{S}(\gg^*)}\partial^\mu = \exp(-\hy_\rho\otimes\partial^\rho)(\partial^\mu\otimes 1)\exp(\hy_\sigma\otimes\partial^\sigma).
\end{gather}
In particular, in the undeformed case when $C_{\mu\nu}^\lambda = 0$ and $\hx_\alpha$, $x_\alpha$ and $\hy_\alpha$ coincide, we obtain the formulas for the undeformed coproduct $\Delta_0$ (which can also easily be checked directly)
\begin{gather}\label{eq:sen3}
\Delta_0\partial^\mu = \exp(-\partial^\alpha\otimes x_\alpha)(1\otimes\partial^\mu)\exp(\partial^\alpha\otimes x_\alpha),\\
\label{eq:sen4}
\Delta_0\partial^\mu = \exp(-x_\alpha\otimes\partial^\alpha)
(\partial^\mu\otimes 1)\exp(x_\alpha\otimes\partial^\alpha).
\end{gather}
Comparing the formulas for the deformed and for the undeformed case we obtain new formulas relating $\Delta_0$ to $\Delta_{\hat{S}(\gg^*)}(\gg^*)$. Indeed, comparing~(\ref{eq:sen1}) and (\ref{eq:sen3}) we obtain
\begin{gather*}
\Delta_{\hat{S}(\gg^*)}(\partial^\mu) = \tilde{\mathcal{F}}^{-1}_L\Delta_0(\partial^\mu)\tilde{\mathcal{F}}_l,
\end{gather*}
and similarly comparing (\ref{eq:sen2}) to (\ref{eq:sen4}) we obtain
\begin{gather*}
\Delta_{\hat{S}(\gg^*)}(\partial^\mu) = \tilde{\mathcal{F}}^{-1}_R\Delta_0(\partial^\mu)\tilde{\mathcal{F}}_r.
\end{gather*}
To extend the identities to the formal power series proceed as in the proof of part~(ii) to Theorem~\ref{thm:expadDelta}.

We would like to have the same identities in~$H\hat\otimes_{U(\gg)}H$, with $\mathcal{F}$ instead of $\tilde{\mathcal{F}}$, but for this the calculation should not depend on a representative, that is $\mathcal{F}^{-1}_L\cdot I_0 \subset I_{U(\gg)}$, which is proven below in Proposition~\ref{prop:ideal}.
\end{proof}

\begin{Definition}
Define the map of left $U(\gg)$-modules $\tilde\Delta\colon H\to H\hat\otimes H$ (completed tensor product over the field) by $\tilde\Delta(u\sharp P) = u\Delta_{\hat{S}(\gg^*)}(P)$ (the inclusion $\hat{S}(\gg^*)\hat\otimes\hat{S}(\gg^*)\hookrightarrow H\hat\otimes H$ understood).
\end{Definition}
\begin{Remark}\label{rem:tDDH}
Clearly $\tilde\Delta(u\sharp P)+I_{U(\gg)} = \Delta_H(u\sharp P)$, but unlike $\Delta_H$ and $\Delta_{\hat{S}(\gg^*)}$ the map $\tilde\Delta$ is not multiplicative, that is $\tilde\Delta(h)\tilde\Delta(h')\neq\tilde\Delta(h h')$ for general $h,h'\in H$. We use this map to compute more precisely with $\tilde{\mathcal{F}}_l, \tilde{\mathcal{F}}_r$.
\end{Remark}
\begin{Lemma}\label{lem:DeltaFx}\quad
\begin{enumerate}\itemsep=0pt
\item[$(i)$] For $\mu = 1,\dots, n$ we have
\begin{gather}
\tilde{\mathcal{F}}_l^{-1}(x_\mu\otimes 1)\tilde{\mathcal{F}}_l = \tilde\Delta(x_\mu)
+ \hx_\tau\big(\phi^{-1}\big)^\tau_\mu\otimes 1 - \big(\tilde\phi^{-1}\big)^\tau_\mu\otimes\hx_\tau\nonumber\\
\hphantom{\tilde{\mathcal{F}}_l^{-1}(x_\mu\otimes 1)\tilde{\mathcal{F}}_l =}{}
+ \big(\mathcal{O}^\sigma_\lambda\otimes\hx_\sigma-\hx_\lambda\otimes 1\big)\tilde\Delta\big(\big(\phi^{-1}\big)^\lambda_\mu),\label{eq:tFLx1FL}
\\
\label{eq:tFL1xFL}
\tilde{\mathcal{F}}_l^{-1}(1\otimes x_\mu)\tilde{\mathcal{F}}_l =
\tilde\Delta(x_\mu)+\big(\mathcal{O}^\sigma_\lambda\otimes\hx_\sigma-\hx_\lambda\otimes 1\big)\tilde\Delta\big(\big(\phi^{-1}\big)^\lambda_\mu\big),
\\ \label{eq:tFR1xFR}
\tilde{\mathcal{F}}_r^{-1}(1\otimes x_\mu)\tilde{\mathcal{F}}_r = \tilde\Delta(x_\mu)+1\otimes\hx_\mu - \hy_\tau\otimes\big(\phi^{-1}\big)^\tau_\mu,
\\ \label{eq:tFRx1FR}
\tilde{\mathcal{F}}_r^{-1}(x_\mu\otimes 1)\tilde{\mathcal{F}}_r = \tilde\Delta(x_\mu).
\end{gather}

\item[$(ii)$] $x_\mu\otimes 1 - \big(\tilde\phi^{-1}\big)^\tau_\mu\otimes\hat{x}_\tau\in I_{U(\gg)}$,
$\mathcal{O}^\sigma_\lambda\otimes\hx_\sigma-\hx_\lambda\otimes 1\in I_{U(\gg)}$
and $1\otimes\hx_\mu - \hy_\tau\otimes\big(\phi^{-1}\big)^\tau_\mu\in I_{U(\gg)}$.

\item[$(iii)$] $\Delta_H x_\mu = \tilde{\mathcal{F}}_l^{-1}(x_\mu\otimes 1)\tilde{\mathcal{F}}_l + I_{U(\gg)}
= \tilde{\mathcal{F}}_r^{-1}(1\otimes x_\mu)\tilde{\mathcal{F}}_r + I_{U(\gg)}$,
where $I_{U(\gg)}$ is the right ideal generated by $\beta(u)\otimes 1 - 1\otimes \alpha(u)$ for all $u\in U(\gg)$.
\end{enumerate}
\end{Lemma}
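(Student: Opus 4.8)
The plan is to evaluate each conjugation in~(i) by two applications of Hadamard's formula $\exp(A)B\exp(-A)=\exp(\operatorname{ad}A)(B)$. Write $\tilde{\mathcal{F}}_l=E^{-1}\hat{E}$ and $\tilde{\mathcal{F}}_r=F^{-1}\hat{F}$ with $E=\exp(\partial^\alpha\otimes x_\alpha)$, $\hat{E}=\exp(\partial^\sigma\otimes\hx_\sigma)$, $F=\exp(x_\alpha\otimes\partial^\alpha)$, $\hat{F}=\exp(\hy_\sigma\otimes\partial^\sigma)$, so that $\tilde{\mathcal{F}}_l^{-1}h\tilde{\mathcal{F}}_l=\hat{E}^{-1}\big(EhE^{-1}\big)\hat{E}$ and $\tilde{\mathcal{F}}_r^{-1}h\tilde{\mathcal{F}}_r=\hat{F}^{-1}\big(FhF^{-1}\big)\hat{F}$, each a composite of algebra automorphisms of $H\hat\otimes H$. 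The first stage is elementary: $[\partial^\mu,x_\nu]=\delta^\mu_\nu$ together with mutual commutativity of the $x$'s (resp.\ the $\partial$'s) gives $E(1\otimes x_\mu)E^{-1}=1\otimes x_\mu$, $E(x_\mu\otimes 1)E^{-1}=x_\mu\otimes 1+1\otimes x_\mu$, $F(x_\mu\otimes 1)F^{-1}=x_\mu\otimes 1$, $F(1\otimes x_\mu)F^{-1}=1\otimes x_\mu+x_\mu\otimes 1$; these terminating shifts account for the ``boundary'' summands in~(\ref{eq:tFLx1FL})--(\ref{eq:tFR1xFR}).

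The content is the second stage. On $\hat{S}(\gg^*)$ it is already available: (\ref{eq:sen1}) and~(\ref{eq:sen2}) say precisely $\hat{E}^{-1}(1\otimes P)\hat{E}=\Delta_{\hat{S}(\gg^*)}(P)$ and $\hat{F}^{-1}(P\otimes 1)\hat{F}=\Delta_{\hat{S}(\gg^*)}(P)$ for $P\in\hat{S}(\gg^*)$, while $\hat{E}^{-1}(P\otimes 1)\hat{E}=P\otimes 1$ and $\hat{F}^{-1}(1\otimes P)\hat{F}=1\otimes P$ trivially. On the $\hx$- and $\hy$-generators I would iterate the relevant inner derivations: using $[\hx_\sigma,\hx_\mu]=C^\lambda_{\sigma\mu}\hx_\lambda$, $[\partial^\sigma,\hx_\mu]=\phi^\sigma_\mu$, $[\hy_\sigma,\hy_\mu]=-C^\lambda_{\sigma\mu}\hy_\lambda$, $[\partial^\sigma,\hy_\mu]=\tilde\phi^\sigma_\mu$ and $C^\lambda_{\sigma\mu}\partial^\sigma=-\CC^\lambda_\mu$, one checks that the $k$-fold bracket of $-\partial^\sigma\otimes\hx_\sigma$ against $1\otimes\hx_\mu,\hx_\mu\otimes 1,x_\mu\otimes 1$ (and of $-\hy_\sigma\otimes\partial^\sigma$ against $1\otimes\hy_\mu,\hy_\mu\otimes 1,x_\mu\otimes 1$) is, for each $k$, a single monomial in the matrix $\CC$ with a numerical coefficient, so the exponential sums over $k$ collapse via the generating-function definitions~(\ref{eq:phi}) of $\phi,\tilde\phi$; one obtains closed forms such as $\hat{E}^{-1}(1\otimes\hx_\mu)\hat{E}=\OO^\lambda_\mu\otimes\hx_\lambda$, $\hat{E}^{-1}(x_\mu\otimes 1)\hat{E}=x_\mu\otimes 1-(\tilde\phi^{-1})^\lambda_\mu\otimes\hx_\lambda$, $\hat{F}^{-1}(\hy_\mu\otimes 1)\hat{F}=\hy_\lambda\otimes(\OO^{-1})^\lambda_\mu$, $\hat{F}^{-1}(1\otimes\hy_\mu)\hat{F}=1\otimes\hy_\mu-\hy_\lambda\otimes(\OO^{-1})^\lambda_\mu$. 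Multiplicativity of conjugation then lets me assemble everything through the realisation factorisations $x_\mu=\hx_\lambda(\phi^{-1})^\lambda_\mu=\hy_\lambda(\tilde\phi^{-1})^\lambda_\mu$ in $\hat{A}_n\cong H_\gg$; the first of these puts $x_\mu$ into the $U(\gg)$-to-the-left normal form, so that by definition $\tilde\Delta(x_\mu)=(\hx_\lambda\otimes 1)\Delta_{\hat{S}(\gg^*)}((\phi^{-1})^\lambda_\mu)$, and with the matrix identities $\phi^{-1}\tilde\phi=\OO^{-1}$, $\tilde\phi^{-1}\phi=\OO$, $\phi\OO^{-1}=\tilde\phi$ one collects the four identities of~(i).

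For part~(ii), each of the three elements is, up to sign, a right multiple of a generator of the right ideal $I_{U(\gg)}$: from the generator $\hy_\alpha\otimes 1-1\otimes\hx_\alpha$ one gets $(\hy_\alpha\otimes 1-1\otimes\hx_\alpha)((\tilde\phi^{-1})^\alpha_\mu\otimes 1)$ and $(\hy_\alpha\otimes 1-1\otimes\hx_\alpha)(1\otimes(\phi^{-1})^\alpha_\mu)$ (using $\hy_\alpha(\tilde\phi^{-1})^\alpha_\mu=x_\mu=\hx_\alpha(\phi^{-1})^\alpha_\mu$), while $\OO^\sigma_\lambda\otimes\hx_\sigma-\hx_\lambda\otimes 1$ is minus the generator supplied by Lemma~\ref{lem:IUg}. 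Part~(iii) is then immediate: by~(i) and~(ii) the two displayed conjugates differ from $\tilde\Delta(x_\mu)$ only by terms $z$ and $z'\,\tilde\Delta((\phi^{-1})^\lambda_\mu)$ with $z,z'\in I_{U(\gg)}$, which lie in $I_{U(\gg)}$ because it is a right ideal, and $\tilde\Delta(x_\mu)+I_{U(\gg)}=\Delta_H(x_\mu)$ by Remark~\ref{rem:tDDH}.

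The main obstacle is the second stage of~(i): organising the a~priori infinite iterated-bracket expansions so that every series genuinely resums, keeping exact track of which of $\OO^{\pm1},\phi^{-1},\tilde\phi^{-1}$ occurs in which tensor slot, and then reshaping the closed forms into precisely the stated expressions. In particular the clean identity $\tilde{\mathcal{F}}_r^{-1}(x_\mu\otimes 1)\tilde{\mathcal{F}}_r=\tilde\Delta(x_\mu)$ requires, after the reduction $x_\mu=\hy_\lambda(\tilde\phi^{-1})^\lambda_\mu$, seeing that $(\hy_\nu\otimes(\OO^{-1})^\nu_\lambda)\Delta_{\hat{S}(\gg^*)}((\tilde\phi^{-1})^\lambda_\mu)$ collapses to $(\hx_\lambda\otimes 1)\Delta_{\hat{S}(\gg^*)}((\phi^{-1})^\lambda_\mu)$, and it is there that the bookkeeping has to be done with care.
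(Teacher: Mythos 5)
Your proposal is correct and follows essentially the same route as the paper's proof: split each conjugation into the inner (elementary Heisenberg) stage and the outer exponential stage, resum the iterated $\operatorname{ad}$-series of the outer factor into the matrices $\phi^{-1}$, $\tilde\phi^{-1}$, $\mathcal{O}^{\pm1}$ via Hadamard's formula together with $[\hx_\sigma,\hy_\tau]=0$, identify the surviving pieces with $\tilde\Delta(x_\mu)=(\hx_\lambda\otimes 1)\Delta_{\hat{S}(\gg^*)}\big(\big(\phi^{-1}\big)^\lambda_\mu\big)$, and reduce part~(ii) to right multiples of the generators of $I_{U(\gg)}$ supplied by Lemma~\ref{lem:IUg}. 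Your closed forms for the outer conjugations agree with the paper's~(\ref{eq:eFe}), (\ref{eq:eSe}), (\ref{eq:eFeR}), (\ref{eq:eSeR}), and parts~(ii)--(iii) are handled exactly as in the paper.
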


\begin{proof} We first compute $\tilde{\mathcal{F}}^{-1}_l(x_\mu\otimes 1)\tilde{\mathcal{F}}_l$ and $\tilde{\mathcal{F}}^{-1}_l(1\otimes x_\mu)\tilde{\mathcal{F}}_l$. In order to conjugate with $\tilde{\mathcal{F}}^{-1}_L$ which is a product of exponentials, we first conjugate with the ``inner'' exponential
\begin{gather}\label{eq:ex1e}
\exp(\partial^\rho\otimes x_\rho)(x_\mu\otimes 1)\exp(-\partial^\sigma\otimes x_\sigma) = x_\mu\otimes 1 + 1\otimes x_\mu,\\
\label{eq:e1xe}
\exp(\partial^\rho\otimes x_\rho)(1\otimes x_\mu)\exp(-\partial^\sigma\otimes x_\sigma) = 1\otimes x_\mu.
\end{gather}
Now we need to apply outer exponentials.

By induction on $k = 0,1,2,\ldots$ one checks that
(in notation~(\ref{CCmatrix}))
\begin{gather}\label{eq:adC}
\ad^k(\partial^\rho\otimes\hx_\rho) (1\otimes\hx_\mu) = [(-\CC)^k]^\tau_\mu\otimes\hx_\tau.
\end{gather}
Hadamard's formula, identity $\ad(\partial^\rho\otimes\hx_\rho)(x_\mu\otimes 1)=1\otimes\hx_\mu$ and equation~(\ref{eq:adC}) imply
\begin{gather*}
\exp(-\partial^\sigma\otimes\hx_\sigma)(x_\mu\otimes 1)\exp(\partial^\rho\otimes\hat{x}_\rho) = x_\mu\otimes 1 - \sum_{k=1}^\infty \frac{(\CC^{k-1})^\tau_\mu}{k!}\otimes\hx_\tau\\
\hphantom{\exp(-\partial^\sigma\otimes\hx_\sigma)(x_\mu\otimes 1)\exp(\partial^\rho\otimes\hat{x}_\rho) }{}
 = x_\mu\otimes 1 - \left(\frac{e^{\CC}-1}{\CC}\right)^\tau_\mu\otimes\hx_\tau,
\end{gather*}
therefore by~(\ref{eq:phi})
\begin{gather}\label{eq:eFe}
\exp(-\partial^\sigma\otimes\hx_\sigma)(x_\mu\otimes 1)\exp(\partial^\rho\otimes
\hat{x}_\rho) = x_\mu\otimes 1 - \big(\tilde\phi^{-1}\big)^\tau_\mu\otimes\hat{x}_\tau.
\end{gather}
Now conjugate $1\otimes x_\mu = 1\otimes\hy_\tau\OO^\tau_\sigma(\phi^{-1})^\sigma_\mu$ with $\exp(-\partial^\sigma\otimes\hx_\sigma)$, using
\begin{gather*}1\otimes\hy_\tau\OO^\tau_\sigma\big(\phi^{-1}\big)^\sigma_\mu = (1\otimes\hy_\tau)\exp(\partial^\nu\otimes\hx_\nu)\exp(-\partial^\lambda\otimes\hx_\lambda)\big(1\otimes\OO^\tau_\sigma \big(\phi^{-1}\big)^\sigma_\mu\big)\end{gather*}
and $[\hx_\sigma,\hy_\tau] = 0$, therefore obtaining
\begin{gather}
\exp(-\partial^\sigma\otimes\hx_\sigma)\big(1\otimes \hy_\tau\OO^\tau_\lambda\big(\phi^{-1}\big)^\lambda_\mu\big)\exp(\partial^\rho\otimes\hx_\rho) \nonumber\\
\qquad{} = (1\otimes\hy_\tau)\Delta_{\hat{S}(\gg^*)}(\OO^\tau_\lambda)\Delta_{\hat{S}(\gg^*)}\big(\big(\phi^{-1}\big)^\lambda_\mu\big)
 = (\mathcal{O}^\sigma_\lambda\otimes\hx_\sigma)\Delta_{\hat{S}(\gg^*)}\big(\big(\phi^{-1}\big)^\lambda_\mu\big),\nonumber\\
\label{eq:eSe}
\exp(-\partial^\sigma\otimes\hx_\sigma)\big(1\otimes \hy_\tau\OO^\tau_\lambda\big(\phi^{-1}\big)^\lambda_\mu\big)\exp(\partial^\rho\otimes\hx_\rho)\nonumber\\
\qquad{}
= \tilde\Delta(x_\mu) + (\mathcal{O}^\sigma_\lambda\otimes\hx_\sigma-\hx_\lambda\otimes 1)\tilde\Delta\big(\big(\phi^{-1}\big)^\lambda_\mu\big),
\end{gather}
where we also used the known fact \cite{halg} that $\Delta_{S(\gg^*)}(\mathcal{O}^\tau_\lambda) = \mathcal{O}_\lambda^\sigma\otimes\mathcal{O}^\tau_\sigma$, Theorem~\ref{thm:expadDelta}(ii) for $P = \OO^\tau_\sigma$ and $P = (\phi^{-1})^\sigma_\mu$ and the fact that $(\Delta_H)|_{\hat{S}(\gg^*)}$ agrees with (deformed) $\Delta_{\hat{S}(\gg^*)}$ followed by the inclusion into $H\hat\otimes_{U(\gg)}H$. According to~(\ref{eq:ex1e}) we obtain~(\ref{eq:tFLx1FL}) by adding~(\ref{eq:eFe}) and~(\ref{eq:eSe}) and similarly according to~(\ref{eq:e1xe}) we obtain~(\ref{eq:tFL1xFL}) from~(\ref{eq:eSe}).

To compute $\tilde{\mathcal{F}}^{-1}_r(1\otimes x_\mu)\tilde{\mathcal{F}}_r$ and $\tilde{\mathcal{F}}^{-1}_r(x_\mu\otimes 1)\tilde{\mathcal{F}}_r$ we firstly conjugate with $\exp(x_\rho\otimes\partial^\rho)$,
\begin{gather}
\exp(x_\rho\otimes\partial^\rho)(1\otimes x_\mu) \exp(-x_\sigma\otimes\partial^\sigma) = 1\otimes x_\mu + x_\mu \otimes 1,\nonumber\\
\exp(x_\rho\otimes\partial^\rho)(x_\mu\otimes 1)\exp(-x_\sigma\otimes\partial^\sigma) = x_\mu \otimes 1.\label{eq:ex1eR}
\end{gather}
Then we need to conjugate each summand at the right-hand side with the outer exponential $\exp(-\hy_\sigma\otimes\partial^\sigma)$. By induction on $k = 1,2,\ldots$ one shows the analogue of~(\ref{eq:adC}) (or just notice that $\hx\mapsto \hy$ by changing the sign of the structure constants and interchange the tensor factors):
\begin{gather*}
\ad^k(\hy_\rho\otimes\partial^\rho) (\hy_\mu\otimes 1) = \hy_\tau\otimes\big(\CC^k\big)^\tau_\mu.
\end{gather*}
Along with Hadamard's formula and
$\ad(\hy_\rho\otimes\partial^\rho)(1\otimes x_\mu)=1\otimes\hy_\mu$
this implies
\begin{gather*}
\begin{split}
&\exp(-\hy_\sigma\otimes\partial^\sigma)(1\otimes x_\mu)\exp(\hy_\rho\otimes\partial^\rho) = 1\otimes x_\mu - \sum_{k=1}^\infty \hy_\tau\otimes\frac{\big((-\CC)^{k-1}\big)^\tau_\mu}{k!}\otimes\hx_\tau \\
& \hphantom{\exp(-\hy_\sigma\otimes\partial^\sigma)(1\otimes x_\mu)\exp(\hy_\rho\otimes\partial^\rho)}{}
= 1\otimes x_\mu -\hy_\tau\otimes\left(\frac{e^{-\CC}-1}{-\CC}\right)^\tau_\mu,
\end{split}
\end{gather*}
therefore by~(\ref{eq:phi})
\begin{gather}\label{eq:eFeR}
\exp(-\hy_\sigma\otimes\partial^\sigma)(1\otimes x_\mu)\exp(\hy_\rho\otimes\partial^\rho) = 1\otimes x_\mu - \hat{y}_\tau\otimes\big(\phi^{-1}\big)^\tau_\mu.
\end{gather}
Now we conjugate
the second summand $x_\mu\otimes 1 = \hx_\lambda(\phi^{-1})^\lambda_\mu\otimes 1$ on the right-hand side of~(\ref{eq:ex1eR}) with $\exp(-\hy_\sigma\otimes\partial^\sigma)$, using $[\hx_\sigma,\hy_\tau] = 0$, therefore obtaining
\begin{gather}\label{eq:eSeR}
\exp(-\hy_\sigma\otimes\partial^\sigma)\big(\hx_\lambda\big(\phi^{-1}\big)^\lambda_\mu\otimes 1\big)\exp(\hy_\rho\otimes\partial^\rho) = (\hx_\lambda\otimes_{U(\gg)}1)\Delta_{\hat{S}(\gg^*)}\big(\big(\phi^{-1}\big)^\lambda_\mu\big) = \tilde\Delta(x_\mu),\!\!\!
\end{gather}
where we also used Corollary~\ref{cor:expady} for $P = \big(\phi^{-1}\big)^\sigma_\mu$. Adding~(\ref{eq:eFeR}) and~(\ref{eq:eSeR}) we obtain~(\ref{eq:tFR1xFR}) and similarly~(\ref{eq:eSeR}) alone gives~(\ref{eq:tFRx1FR}).

(ii) The formula~(\ref{eq:xyphi}) gives $x_\mu = \hat{x}_\sigma \big(\phi^{-1}\big)^\sigma_\mu$ and~(\ref{eq:yeqxO}) gives $\tilde\phi^{-1} = \mathcal{O}\phi^{-1}$, while the right ideal~$I_{U(\gg)}$ is by Lemma~\ref{lem:IUg} generated by elements of the form $\hat{x}_\rho\otimes 1 - \mathcal{O}^\tau_\rho\otimes\hat{x}_\tau$, hence
\begin{gather*}
x_\mu\otimes 1 - \big(\tilde\phi^{-1}\big)^\tau_\mu\otimes\hat{x}_\tau = (\hat{x}_\rho\otimes 1 - \OO^\tau_\rho\otimes\hat{x}_\tau)\big(\big(\phi^{-1}\big)^\rho_\mu\otimes 1\big)\in I_{U(\gg)}.\end{gather*}

Similarly, $1\otimes x_\mu - \hat{y}_\tau\otimes\big(\phi^{-1}\big)^\tau_\mu = (1\otimes \hat{x}_\tau - \hat{y}_\tau\otimes 1)\big(1\otimes\big(\phi^{-1}\big)^\tau_\mu\big)\in I_{U(\gg)}$.

Part (iii) follows from (i) and (ii) (note Remark~\ref{rem:tDDH}).
\end{proof}

Finally, one proves that the undeformed right ideal $I_0$ generated by $x_\mu\otimes 1 - 1\otimes x_\mu$ after twist ends in the deformed right ideal $I_{U(\gg)}$. In fact,
\begin{Proposition}\label{prop:ideal}
\begin{gather*}
\tilde{\mathcal{F}}_l^{-1}(x_\mu\otimes 1-1\otimes x_\mu)\tilde{\mathcal{F}}_l =
\hat{x}_\tau\big(\phi^{-1}\big)^\tau_\mu\otimes 1 - \big(\tilde\phi^{-1}\big)^\tau_\mu\otimes\hat{x}_\tau\in I_{U(\gg)},
\\
\tilde{\mathcal{F}}_r^{-1}(x_\mu\otimes 1-1\otimes x_\mu)
\tilde{\mathcal{F}}_r =\hy_\tau\otimes\big(\phi^{-1}\big)^\tau_\mu- 1\otimes\hx_\mu \in I_{U(\gg)},
\\
\tilde{\mathcal{F}}_l^{-1}(x_\mu\otimes 1-1\otimes x_\mu)\in I_{U(\gg)},\qquad
\tilde{\mathcal{F}}_r^{-1}(x_\mu\otimes 1-1\otimes x_\mu)\in I_{U(\gg)},
\\
\mathcal{F}_l^{-1}(x_\mu\otimes 1-1\otimes x_\mu) =
\mathcal{F}_r^{-1}(x_\mu\otimes 1-1\otimes x_\mu) = 0 + I_{U(\gg)}
\in H\hat\otimes_{U(\gg)} H,
\\
\tilde{\mathcal{F}}_l^{-1}\cdot I_0 = I_{U(\gg)} = \tilde{\mathcal{F}}_R^{-1}\cdot I_0,
\qquad
\tilde{\mathcal{F}}_l\cdot I_{U(\gg)} = I_0 = \tilde{\mathcal{F}}_R \cdot I_{U(\gg)},
\\
\mathcal{F}_l\mathcal{F}_l^{-1} = 1\otimes 1 + I_0,\qquad
\mathcal{F}_l^{-1}\mathcal{F}_l = 1\otimes 1 + I_{U(\gg)},
\\
\mathcal{F}_r\mathcal{F}_r^{-1} = 1\otimes 1 + I_0,\qquad
\mathcal{F}_r^{-1}\mathcal{F}_r = 1\otimes 1 + I_{U(\gg)}.
\end{gather*}
\end{Proposition}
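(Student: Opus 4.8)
The statement is essentially bookkeeping: one assembles the conjugation formulas of Lemma~\ref{lem:DeltaFx} and uses throughout that $\tilde{\mathcal{F}}_l$ and $\tilde{\mathcal{F}}_r$ are honestly invertible elements of $H\hat\otimes H$ (with the inverses written out in the Definition, so that $\tilde{\mathcal{F}}_l\tilde{\mathcal{F}}_l^{-1}=1\otimes 1=\tilde{\mathcal{F}}_l^{-1}\tilde{\mathcal{F}}_l$, and likewise for $r$) and that $I_0$, $I_{U(\gg)}$ are closed right ideals of $H\hat\otimes H$. The plan is to take the displays in the listed order. First I would derive the two conjugation identities: subtracting \eqref{eq:tFL1xFL} from \eqref{eq:tFLx1FL} makes the $\tilde\Delta(x_\mu)$ term and the $(\OO^\sigma_\lambda\otimes\hx_\sigma-\hx_\lambda\otimes 1)\tilde\Delta((\phi^{-1})^\lambda_\mu)$ term cancel, leaving $\tilde{\mathcal{F}}_l^{-1}(x_\mu\otimes 1-1\otimes x_\mu)\tilde{\mathcal{F}}_l=\hx_\tau(\phi^{-1})^\tau_\mu\otimes 1-(\tilde\phi^{-1})^\tau_\mu\otimes\hx_\tau$; using $x_\mu=\hx_\tau(\phi^{-1})^\tau_\mu$ from~\eqref{eq:xyphi} and $\tilde\phi^{-1}=\OO\phi^{-1}$ from~\eqref{eq:yeqxO} this rewrites as $(\hx_\rho\otimes 1-\OO^\tau_\rho\otimes\hx_\tau)((\phi^{-1})^\rho_\mu\otimes 1)$, which lies in $I_{U(\gg)}$ by Lemma~\ref{lem:IUg}. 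Subtracting \eqref{eq:tFR1xFR} from \eqref{eq:tFRx1FR} gives the second display, whose right-hand side is recast, exactly as in the proof of Lemma~\ref{lem:DeltaFx}(ii), as $(\hy_\tau\otimes 1-1\otimes\hx_\tau)(1\otimes(\phi^{-1})^\tau_\mu)\in I_{U(\gg)}$.

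Next I would right-multiply these two identities by the honest inverses $\tilde{\mathcal{F}}_l^{-1}$ and $\tilde{\mathcal{F}}_r^{-1}$; since $I_{U(\gg)}$ is a right ideal this gives the third display, $\tilde{\mathcal{F}}_l^{-1}(x_\mu\otimes 1-1\otimes x_\mu)\in I_{U(\gg)}$ and its $r$-analogue, and the fourth display is merely its reduction modulo $I_{U(\gg)}$ inside $H\hat\otimes_{U(\gg)}H$. Since $I_0$ is the right ideal of $H\hat\otimes H$ generated by the $n$ elements $x_\mu\otimes 1-1\otimes x_\mu$ (the discussion of~\eqref{eq:IA}, together with Lemma~\ref{lem:IUg} in the abelian case), the third display already yields $\tilde{\mathcal{F}}_l^{-1}\cdot I_0\subseteq I_{U(\gg)}$ and $\tilde{\mathcal{F}}_r^{-1}\cdot I_0\subseteq I_{U(\gg)}$. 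For the opposite inclusions I would rearrange the conjugation identities by the invertible matrix $\phi$: right-multiplying the first identity by $(\phi^\mu_\nu\otimes 1)$ and summing over $\mu$ collapses $(\phi^{-1})^\rho_\mu\phi^\mu_\nu$ to $\delta^\rho_\nu$, so that after left-multiplying by $\tilde{\mathcal{F}}_l$ and using $\tilde{\mathcal{F}}_l\tilde{\mathcal{F}}_l^{-1}=1\otimes 1$ one obtains $\tilde{\mathcal{F}}_l(\hx_\nu\otimes 1-\OO^\tau_\nu\otimes\hx_\tau)=\sum_\mu(x_\mu\otimes 1-1\otimes x_\mu)\,\tilde{\mathcal{F}}_l(\phi^\mu_\nu\otimes 1)\in I_0$. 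Since the $\hx_\nu\otimes 1-\OO^\tau_\nu\otimes\hx_\tau$ generate $I_{U(\gg)}$ (Lemma~\ref{lem:IUg}) and $\tilde{\mathcal{F}}_l\cdot(-)$ is cofiltered, this gives $\tilde{\mathcal{F}}_l\cdot I_{U(\gg)}\subseteq I_0$; applying $\tilde{\mathcal{F}}_l^{-1}$ gives $I_{U(\gg)}\subseteq\tilde{\mathcal{F}}_l^{-1}\cdot I_0$, hence $\tilde{\mathcal{F}}_l^{-1}\cdot I_0=I_{U(\gg)}$ and then $\tilde{\mathcal{F}}_l\cdot I_{U(\gg)}=I_0$. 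The same computation, with $(1\otimes\phi^\mu_\nu)$ in place of $(\phi^\mu_\nu\otimes 1)$ and the generators $\beta(\hx_\nu)\otimes 1-1\otimes\alpha(\hx_\nu)=\hy_\nu\otimes 1-1\otimes\hx_\nu$ of $I_{U(\gg)}$, handles $\tilde{\mathcal{F}}_r$.

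The last four displays then follow at once. Because $\tilde{\mathcal{F}}_l\cdot I_{U(\gg)}\subseteq I_0$, the left multiplication $\mathcal{F}_l^\sharp$ of classes by $\tilde{\mathcal{F}}_l$ is a well-defined map $H\hat\otimes_{U(\gg)}H\to H\hat\otimes_{S(\gg)}H$, and $\mathcal{F}_l\mathcal{F}_l^{-1}=\mathcal{F}_l^\sharp(\tilde{\mathcal{F}}_l^{-1}+I_{U(\gg)})=\tilde{\mathcal{F}}_l\tilde{\mathcal{F}}_l^{-1}+I_0=1\otimes 1+I_0$; dually, $\tilde{\mathcal{F}}_l^{-1}\cdot I_0\subseteq I_{U(\gg)}$ makes $\mathcal{F}_l^{-1\sharp}$ well defined and $\mathcal{F}_l^{-1}\mathcal{F}_l=\tilde{\mathcal{F}}_l^{-1}\tilde{\mathcal{F}}_l+I_{U(\gg)}=1\otimes 1+I_{U(\gg)}$, and the two $r$-identities are verified the same way. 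This in particular shows that $\mathcal{F}_l^{-1}$ and $\mathcal{F}_r^{-1}$ are genuine twist inverses in the sense of Remark~\ref{rem:invcoc}, closing the loose end noted right after the Definition.

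The one step that is not pure bookkeeping is the opposite inclusion $\tilde{\mathcal{F}}_l\cdot I_{U(\gg)}\subseteq I_0$ (and its $r$-analogue): one must perform the $\phi$-rearrangement so that the generator $x_\mu\otimes 1-1\otimes x_\mu$ ends up as a \emph{left} factor of each summand, so that the right-ideal property of $I_0$ can be invoked, and one must check that the ``closed right ideal generated by'' descriptions of $I_0$ and $I_{U(\gg)}$ are compatible with the profinite sums occurring when one passes from the finitely many generators to the whole ideals, that is, that the cofiltered map $\tilde{\mathcal{F}}_l\cdot(-)$ carries the closure of the generated right ideal into the closure of $I_0$.
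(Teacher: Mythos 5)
Your proposal is correct and follows essentially the same route as the paper's proof: subtract the conjugation identities of Lemma~\ref{lem:DeltaFx}, identify the results as elements of $I_{U(\gg)}$ via the factorizations through the generators of Lemma~\ref{lem:IUg}, right-multiply by the honest inverses in $H\hat\otimes H$ for the one-sided inclusions, and left-multiply back by $\tilde{\mathcal{F}}_l$, $\tilde{\mathcal{F}}_r$ after a $\phi$-rearrangement to get the reverse inclusions and hence the equalities and the well-definedness of the products of cosets. The only cosmetic difference is which generating set of $I_{U(\gg)}$ you recover in the reverse-inclusion step (you contract with $\phi^\mu_\nu\otimes 1$ to land on $\hat{x}_\nu\otimes 1-\mathcal{O}^\tau_\nu\otimes\hat{x}_\tau$, the paper contracts with $\tilde\phi^\mu_\nu\otimes 1$ to land on $\hat{y}_\nu\otimes 1-1\otimes\hat{x}_\nu$), which is immaterial.
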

\begin{proof} The first line follows by subtracting~(\ref{eq:tFL1xFL}) from~(\ref{eq:tFLx1FL}) and the second line by subtrac\-ting~(\ref{eq:tFR1xFR}) from~(\ref{eq:tFRx1FR}). The third line follows by multiplying the first by $\tilde{\mathcal{F}}_r^{-1}$ and multiplying the second line by $\tilde{\mathcal{F}}_r^{-1}$. The fourth line is clearly just the restatement of the third in the quotient.

$\tilde{\mathcal{F}}_l^{-1} I_0\subset I_{U(\gg)}$ follows from the first line by noticing that the elements $x_\mu\otimes 1-1\otimes x_\mu$, $\mu = 1,\dots,n$ generate $I_0$ and similarly $\tilde{\mathcal{F}}_r^{-1} I_0\subset I_{U(\gg)}$ follows from the second line.

To show $\tilde{\mathcal{F}}_l I_{U(\gg)} \subset I_0$ multiply the first line with $\tilde{\mathcal{F}_l}$ from the left to obtain
\begin{gather*}
\tilde{\mathcal{F}_l}\big(x_\mu\otimes 1 - \big(\tilde\phi^{-1}\big)^\tau_\mu\otimes\hat{x}_\tau\big) = (x_\mu\otimes 1 - 1\otimes x_\mu)\cdot\tilde{\mathcal{F}}_L\in I_{U(\gg)},
\end{gather*}
and note that this is sufficient because the elements of the form $x_\tau\otimes 1 - (\tilde\phi^{-1})^\tau_\mu\otimes\hat{x}_\tau$ genera\-te~$I_{U(\gg)}$ because $(x_\mu\otimes 1 - (\tilde\phi^{-1})^\tau_\mu\otimes\hat{x}_\tau)(\tilde\phi^\mu_\nu\otimes 1) = \hat{y}_\nu\otimes 1 - 1\otimes\hat{x}_\nu$. Similarly we multiply the second line with $\tilde{\mathcal{F}}_R$ from the left to obtain
\begin{gather*}
\tilde{\mathcal{F}_R}\big(\hy_\tau\otimes\big(\phi^{-1}\big)^\tau_\mu- 1\otimes\hx_\mu\big) = (x_\mu\otimes 1 - 1\otimes x_\mu)\cdot\tilde{\mathcal{F}}_R^{-1}\in I_0.
\end{gather*}
This is sufficient to conclude $\tilde{\mathcal{F}}_R I_{U(\gg)}\subset I_0$ after observing that the elements of the form $\hy_\tau\otimes\big(\phi^{-1}\big)^\tau_\mu- 1\otimes\hx_\mu$ generate $I_{U(\gg)}$, indeed $\big(\hy_\tau\otimes\big(\phi^{-1}\big)^\tau_\mu- 1\otimes\hx_\mu\big) (1\otimes\phi^\tau_\nu\otimes 1) = \hy_\nu\otimes 1-1\otimes\hx_\nu$. Now $\tilde{\mathcal{F}}^{-1} I_0\subset I_{U(\gg)}$ and $\tilde{\mathcal{F}}I_{U(\gg)}\subset I_0$ together imply the equality whenever $\tilde{\mathcal{F}}^{-1}$ and $\tilde{\mathcal{F}}$ are strict inverses in $H\hat\otimes H$.

The assertions on $\mathcal{F}_l$, $\mathcal{F}_r$ are the direct corollary: the products $\mathcal{F}_l\mathcal{F}_l^{-1}$, $\mathcal{F}_r\mathcal{F}_r^{-1}$, $\mathcal{F}^{-1}_l\mathcal{F}_l$, $\mathcal{F}^{-1}_r\mathcal{F}_r$ are well defined so we can compute the representatives using $\tilde{\mathcal{F}}_l$, $\tilde{\mathcal{F}}_r$, $\tilde{\mathcal{F}}^{-1}_l$, $\tilde{\mathcal{F}}_r^{-1}$.
\end{proof}

\begin{Proposition}\label{prop:FDFm}
For every $h\in H = H_\gg$,
\begin{gather*}
\Delta_H(h) = \mathcal{F}_l^{-1}\Delta_0(h)\mathcal{F}_l = \mathcal{F}_r^{-1}\Delta_0(h)\mathcal{F}_r.
\end{gather*}
\end{Proposition}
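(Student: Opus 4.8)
The plan is to reduce the identity to its validity on a set of topological generators of $H=\hat{A}_n$, leveraging that $H_\gg$ is a genuine bialgebroid — so that the ``multiplicativity'' characterization of $\Delta_H$ flagged after~\eqref{eq:IA} is available — together with the ideal identities of Proposition~\ref{prop:ideal}. I will treat $\mathcal{F}_l$; the case of $\mathcal{F}_r$ is verbatim the same after interchanging the two tensor factors. First I would regard $\Delta_0$ as the \emph{algebra} homomorphism $H\to H\hat\otimes H$ determined on the Weyl generators by $\Delta_0(x_\mu)=x_\mu\otimes 1$ and $\Delta_0(\partial^\mu)=1\otimes\partial^\mu+\partial^\mu\otimes 1$: one checks on generators that the Weyl relations are preserved and that the map is cofiltered, so it extends to all of $\hat{A}_n$, and it lifts the bialgebroid coproduct $\Delta_0$, consistently with the usage in Lemma~\ref{lem:DeltaFd}. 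Since $\tilde{\mathcal{F}}_l$ and $\tilde{\mathcal{F}}_l^{-1}$ are honest mutually inverse elements of $H\hat\otimes H$, the map $\Theta_l(h):=\tilde{\mathcal{F}}_l^{-1}\Delta_0(h)\tilde{\mathcal{F}}_l$ is an algebra homomorphism and cofiltered map $H\to H\hat\otimes H$ (the composite of $\Delta_0$ with conjugation by $\tilde{\mathcal{F}}_l$). Let $D_l:=\pi\circ\Theta_l\colon H\to H\hat\otimes_{U(\gg)}H$, with $\pi$ the canonical projection; by Proposition~\ref{prop:ideal} ($\tilde{\mathcal{F}}_l^{-1}\cdot I_0=I_{U(\gg)}$, and $I_0$ a right ideal) the class $D_l(h)$ is independent of the chosen representative of $\Delta_0(h)$, so $D_l(h)$ is exactly what the statement writes as $\mathcal{F}_l^{-1}\Delta_0(h)\mathcal{F}_l$. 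The goal is thus $D_l=\Delta_H$.

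Then I would record two facts. \emph{(a) Both $\Delta_H$ and $D_l$ are reconstructible on products from their values on factors.} For $\Delta_H$: the characterization recalled after~\eqref{eq:IA} gives $\Delta_H(g)\cdot I_{U(\gg)}\subset I_{U(\gg)}$, so left multiplication by $\Delta_H(g)$ is a well-defined cofiltered operator $L_{\Delta_H(g)}$ on $H\hat\otimes_{U(\gg)}H$; since $\Delta_H(g)\cdot\Delta_H(g')=\Delta_H(gg')$ (multiplicativity of $\Delta_H$ on the Takeuchi product) and $L_{\Delta_H(g)}(1\otimes 1)=\Delta_H(g)$, one gets $\Delta_H(g_1\cdots g_m)=L_{\Delta_H(g_1)}\cdots L_{\Delta_H(g_m)}(1\otimes 1)$. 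For $D_l$: $\Theta_l$ is multiplicative in $H\hat\otimes H$, and $\Theta_l(g)\cdot I_{U(\gg)}=\tilde{\mathcal{F}}_l^{-1}\Delta_0(g)\tilde{\mathcal{F}}_l\cdot I_{U(\gg)}=\tilde{\mathcal{F}}_l^{-1}\Delta_0(g)\cdot I_0\subset\tilde{\mathcal{F}}_l^{-1}\cdot I_0=I_{U(\gg)}$, using Proposition~\ref{prop:ideal} ($\tilde{\mathcal{F}}_l\cdot I_{U(\gg)}=I_0$, $\tilde{\mathcal{F}}_l^{-1}\cdot I_0=I_{U(\gg)}$) and $\Delta_0(g)\cdot I_0\subset I_0$ (the bialgebroid property of $\Delta_0$, Example~\ref{ex:varbasic}); hence the operators $L_{D_l(g)}$ are well defined and $D_l(g_1\cdots g_m)=L_{D_l(g_1)}\cdots L_{D_l(g_m)}(1\otimes 1)$. \emph{(b) $D_l$ and $\Delta_H$ agree on the generators and at $1$.} On $\partial^\mu$: Lemma~\ref{lem:DeltaFd} gives $\Theta_l(\partial^\mu)=\tilde{\mathcal{F}}_l^{-1}\Delta_0(\partial^\mu)\tilde{\mathcal{F}}_l=\Delta_{\hat{S}(\gg^*)}(\partial^\mu)$, whose image under $\pi$ is $\Delta_H(1\sharp\partial^\mu)$ by the definition of $\Delta_H$ on $1\sharp\hat{S}(\gg^*)$. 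On $x_\mu$: Lemma~\ref{lem:DeltaFx}(iii) says $\pi(\tilde{\mathcal{F}}_l^{-1}(x_\mu\otimes 1)\tilde{\mathcal{F}}_l)=\Delta_H(x_\mu)$, and the left-hand side is $D_l(x_\mu)$ since $\Delta_0(x_\mu)=x_\mu\otimes 1$. Finally $D_l(1)=\pi(1\otimes 1)=\Delta_H(1)$.

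Combining (a) and (b): $D_l$ and $\Delta_H$ agree on every product of the generators $x_1,\dots,x_n,\partial^1,\dots,\partial^n$, in particular on the PBW monomials, whose span is dense in $H=\hat{A}_n$; since both maps are cofiltered they distribute over the relevant profinite sums, so $D_l=\Delta_H$ on all of $H$. Running the same argument with $\tilde{\mathcal{F}}_r$ — using the $\mathcal{F}_r$ clauses of Proposition~\ref{prop:ideal}, Lemma~\ref{lem:DeltaFd}, and, for the generator $x_\mu$, equation~\eqref{eq:tFRx1FR} together with Remark~\ref{rem:tDDH} (so that $D_r(x_\mu)=\pi(\tilde{\mathcal{F}}_r^{-1}(x_\mu\otimes 1)\tilde{\mathcal{F}}_r)=\pi(\tilde\Delta(x_\mu))=\Delta_H(x_\mu)$) in place of Lemma~\ref{lem:DeltaFx}(iii) — yields $\Delta_H(h)=\mathcal{F}_r^{-1}\Delta_0(h)\mathcal{F}_r$.

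The genuinely new content has already been extracted in Lemmas~\ref{lem:DeltaFd}, \ref{lem:DeltaFx} and Proposition~\ref{prop:ideal}; what remains, and what must be handled with care, is the bookkeeping of well-definedness — that conjugation by $\tilde{\mathcal{F}}_l$ and factorwise multiplication literally live in $H\hat\otimes H$, that the descent to $H\hat\otimes_{U(\gg)}H$ is compatible with the inclusions $\tilde{\mathcal{F}}_l^{\pm 1}\cdot I_\bullet=I_\bullet$ of Proposition~\ref{prop:ideal}, and that the passage from generators to arbitrary $h$ is legitimate in the completed setting (cofiltered maps distributing over profinite sums; $I_0$ and $I_{U(\gg)}$ being right ideals, so that products of representatives behave). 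The one structural input relied on is that $H_\gg$ is a bona fide bialgebroid, established in~\cite{halg} — this is what makes the reconstruction identity for $\Delta_H$ in (a) available, and it is therefore the natural candidate for the main (if modest) obstacle.
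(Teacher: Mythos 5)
Your proposal is correct and follows essentially the same route as the paper's own proof: well-definedness via Proposition~\ref{prop:ideal}, agreement on the generators $\partial^\mu$ and $x_\mu$ via Lemmas~\ref{lem:DeltaFd} and~\ref{lem:DeltaFx}, and extension to all of $H$ by multiplicativity with the same careful bookkeeping of the right ideals $I_0$ and $I_{U(\gg)}$. Your packaging via the honest algebra homomorphism $\Theta_l$ on $H\hat\otimes H$ and the left-multiplication operators is a slightly tidier way of organizing the paper's ``inspect the products sequentially from the right'' step, but it is the same argument.
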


\begin{proof} By Proposition~\ref{prop:ideal} the expressions $\mathcal{F}_l^{-1}\Delta_0(h)\mathcal{F}_l$ and $\mathcal{F}_r^{-1}\Delta_0(h)\mathcal{F}_r$ are well defined (do not depend on the representative of $\Delta_0(h)$). We can compute a representative of the resulting class modulo $I_{U(\gg)}$ as $\tilde{\mathcal{F}}^{-1}\Delta_0'(h)\tilde{\mathcal{F}}$ where $\Delta_0'(h)$ is any representative of $\Delta_0(h)$ and $\mathcal{F}$ is~$\mathcal{F}_l$ or~$\mathcal{F}_r$. This way the statement of the proposition for the generators $h = \partial^\mu$ follows by Lemma~\ref{lem:DeltaFd} and for the rest of generators $\hat{x}_\alpha$ by Lemma~\ref{lem:DeltaFx}. We need to extend the statement for all $h\in H$ by linearity and some sort of multiplicativity. Some care is however needed to achieve this.

Namely, $\sum_\alpha h_\alpha\otimes h'_\alpha \mapsto\tilde{\mathcal{F}}^{-1} \big(\sum_\alpha h_\alpha\otimes h'_\alpha\big) \tilde{\mathcal{F}}$ is a~homomorphism of algebras $H\hat\otimes H\to H\hat\otimes H$ and by $\mathcal{F}^{-1}\cdot I_0 = I_{U(\gg)}$ it induces a well defined map of vector spaces $\mathcal{F}^{-1} (-)\mathcal{F}\colon H\hat\otimes_{S(\gg)}H\to H\hat\otimes_{U(\gg)} H$. Regarding that $H\hat\otimes_{U(\gg)} H$ is not an algebra, we can not say that this map is a homomorphism of algebras. Suppose $h_1,h_2\in H$ are such that $\mathcal{F}^{-1}\Delta_0(h_i)\mathcal{F}=\Delta_H(h_i)$. Regarding that the images of $\Delta_H$ and $\Delta_0$ are algebras, that $\Delta_H, \Delta_0$ viewed as corestrictions to the images are multiplicative, and $\mathcal{F}\mathcal{F}^{-1} = 1\otimes 1 + I_0$, we calculate
\begin{gather*}\mathcal{F}^{-1}\Delta_0(h_1 h_2)\mathcal{F}=
\mathcal{F}^{-1}\Delta_0(h_1)\mathcal{F}\mathcal{F}^{-1}\Delta_0(h_2)\mathcal{F}=
\Delta_H(h_1)\Delta_H(h_2) = \Delta_H(h_1 h_2),\end{gather*}
which would be sufficient to end the proof. However, we freely used associativity and cancellations though the factors do not belong to an associative algebra. Associativity holds for the representatives in $H\hat\otimes H$, hence it is enough that all the products involved are well defined up to an appropriate right ideal. In our case we inspect this sequentially from the right to the left for the products involved, using $\mathcal{F} I_{U(\gg)} = I_0$, $\mathcal{F}^{-1} I_0 = I_{U(\gg)}$, $\Delta_0(h) I_0 \subset I_0$, $\Delta_H(h) I_{U(\gg)} \subset I_0$ and that $I_0$, $I_{U(\gg)}$ are right ideals.
\end{proof}

\begin{Lemma}[\cite{scopr}]\label{lem:exp}
In symmetric ordering, $\exp\big(\sum_\alpha t_\alpha x_\alpha\big)\blacktriangleright_\gg 1 = \exp\big(\sum_\alpha t_\alpha\hx_\alpha\big)$ for any formal variables $t_\alpha$ which commute with $x_\beta$, $\hx_\beta$. Conversely, $\exp\big(\sum_\alpha t_\alpha\hx_\alpha\big)\triangleright 1 = \exp\big(\sum_\alpha t_\alpha x_\alpha\big)$. In particular,
\begin{gather}
\exp\bigg(\sum_\alpha \partial^\alpha\otimes x_\alpha\bigg)(\cdot \otimes\blacktriangleright_\gg) (1\otimes 1) =
\exp\bigg(\sum_\alpha \partial^\alpha\otimes \hat{x}_\alpha\bigg),\nonumber\\
\label{eq:exp2exp}
\exp\bigg(\sum_\alpha \partial^\alpha\otimes \hat{x}_\alpha\bigg)
(\cdot \otimes\triangleright) (1\otimes 1) =\exp\bigg(\sum_\alpha \partial^\alpha\otimes x_\alpha\bigg).
\end{gather}
\end{Lemma}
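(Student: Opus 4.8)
The plan is to derive both exponential identities from a single elementary property of the symmetrization (PBW) isomorphism $\xi\colon S(\gg)\to U(\gg)$, namely that it sends the $r$-th power of a linear element of $S(\gg)$ to the $r$-th power of the corresponding element of $U(\gg)$; everything else is a matter of passing from polynomials to formal power series and keeping track of completions.

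First I would record the core computation. Let the $t_\alpha$ be central formal variables and put $v=\sum_\alpha t_\alpha x_\alpha\in S(\gg)[t]$ and $\hat{v}=\sum_\alpha t_\alpha\hx_\alpha\in U(\gg)[t]$. Since $v^r$ lies in the $r$-th symmetric power $S^r(\gg)$ and the restriction of $\xi$ to $S^r(\gg)$ is exactly the symmetrization map $w_1\cdots w_r\mapsto\frac{1}{r!}\sum_{\sigma\in\Sigma(r)}\hat{w}_{\sigma 1}\cdots\hat{w}_{\sigma r}$, taking $w_1=\cdots=w_r=v$ gives $\xi(v^r)=\hat{v}^{\,r}$, all $r!$ summands being equal. (Linearity of $v$ is \emph{essential} here: $\xi$ is only a coalgebra isomorphism, so $\xi(\exp v)=\exp\xi(v)$ is not automatic in general.) Summing over $r$ with weights $1/r!$ in the $t$-adic completions of $S(\gg)[t]$ and $U(\gg)[t]$ then yields $\xi(\exp v)=\exp\hat{v}$, and, $\xi$ being bijective, also $\xi^{-1}(\exp\hat{v})=\exp v$.

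Next I would invoke the Proposition of Section~\ref{s:repeathalg}, which in symmetric ordering gives $f\blacktriangleright_\gg 1=\xi(f)$ for $f\in S(\gg)$ and $u\triangleright 1=\xi^{-1}(u)$ for $u\in U(\gg)$. Both $h\mapsto h\blacktriangleright_\gg 1$ and $h\mapsto h\triangleright 1$ are cofiltered (and $\genfd[t]$-linear after adjoining the $t_\alpha$), hence distribute over the $t$-adic profinite sums above; applying them term by term to $\exp(\sum_\alpha t_\alpha x_\alpha)$ and $\exp(\sum_\alpha t_\alpha\hx_\alpha)$ and using the previous paragraph produces the two stated identities. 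For the two ``in particular'' formulas I would specialize $t_\alpha=\partial^\alpha\otimes 1$ inside $H\hat\otimes H$: there $\partial^\alpha\otimes 1$ commutes with $1\otimes x_\beta$ and with $1\otimes\hx_\beta$ (they lie in different tensor legs), the completion of $H\hat\otimes H$ by the degree of differential operators already makes $\exp(\sum_\alpha\partial^\alpha\otimes x_\alpha)$ a profinite sum whose degree-$r$ part in the first leg is the finite sum $\frac{1}{r!}\sum\partial^{\alpha_1}\cdots\partial^{\alpha_r}\otimes x_{\alpha_1}\cdots x_{\alpha_r}$, and $(\cdot\otimes\blacktriangleright_\gg)(1\otimes 1)$ acts as the identity on the first leg and as $h\mapsto h\blacktriangleright_\gg 1$ on the second. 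Since $\partial^{\alpha_1}\cdots\partial^{\alpha_r}$ is symmetric in its indices, the symmetrization identity applies factorwise, giving $\exp(\sum_\alpha\partial^\alpha\otimes\hx_\alpha)$; the argument for $(\cdot\otimes\triangleright)(1\otimes 1)$ is dual.

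The main obstacle is not the algebra --- which amounts to the one-line symmetrization identity $\xi(v^r)=\hat{v}^{\,r}$ --- but the bookkeeping of completions: one has to confirm that $\exp(\sum_\alpha t_\alpha x_\alpha)$, $\exp(\sum_\alpha t_\alpha\hx_\alpha)$ and their images under $\blacktriangleright_\gg$ and $\triangleright$ are legitimate profinite sums in the relevant cofiltrations (the $t$-adic one, respectively the one by degree of differential operators on $H\hat\otimes H$), and that the maps applied to them are cofiltered and hence may be evaluated term by term. Once this is verified the lemma reduces to the polynomial statements already contained in the Proposition, exactly as in the passage from part~(i) to part~(ii) of Theorem~\ref{thm:expadDelta}; this is in substance the argument of~\cite{scopr}.
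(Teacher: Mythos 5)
The paper does not prove this lemma; it is quoted from~\cite{scopr}. Your argument is correct and is in substance the standard one behind that citation: the whole content is the observation that the symmetrization map satisfies $\xi(v^r)=\hat v^{\,r}$ for a \emph{linear} element $v=\sum_\alpha t_\alpha x_\alpha$ (all $r!$ summands of the symmetrized product coincide), combined with $f\blacktriangleright_\gg 1=\xi(f)$ and $u\triangleright 1=\xi^{-1}(u)$ from the Proposition of Section~\ref{s:repeathalg}, and the routine verification that these cofiltered maps may be applied term by term to the relevant profinite sums. Your handling of the ``in particular'' case is also right: the substitution $t_\alpha=\partial^\alpha\otimes 1$ is legitimate because these elements commute with $1\otimes x_\beta$ and $1\otimes\hx_\beta$, and the symmetry of $\partial^{\alpha_1}\cdots\partial^{\alpha_r}$ in the first leg lets the factorwise symmetrization collapse to $\frac{1}{r!}\big(\sum_\alpha\partial^\alpha\otimes\hx_\alpha\big)^r$.
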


\begin{Theorem} \label{th:main}
$\mathcal{F}_l$ and $\mathcal{F}_r$ are
counital Drinfeld twists for $S(\gg)$-bialgebroid
on completed Weyl algebra $\hat{A}_n$ and by twisting
they yield the Heisenberg double $H_\gg$
of the corresponding universal enveloping algebra $U(\gg)$
with its canonical $U(\gg)$-bialgebroid structure.
\end{Theorem}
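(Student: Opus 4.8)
The plan is to verify that $\mathcal{F}_l$ (and, by the symmetric argument, $\mathcal{F}_r$) satisfies the two conditions defining a Drinfeld twist for $H=\hat{A}_n$ regarded as an $S(\gg)$-bialgebroid with coproduct $\Delta_0$ and counit $\epsilon$ (evaluation at the origin), and then to identify, one structure map at a time, the twisted bialgebroid of Theorem~\ref{th:cocycimpltw} with $H_\gg$. For counitality I would compute with the representatives $\tilde{\mathcal{F}}_l,\tilde{\mathcal{F}}_r\in H\hat\otimes H$ using $\epsilon(\partial^\mu)=0$ and $\epsilon(\hx_\mu)=x_\mu$. Since the first tensor slot of $\tilde{\mathcal{F}}_l$ and the second of $\tilde{\mathcal{F}}_r$ are power series in the $\partial$'s alone, applying $\epsilon$ there kills everything but the unit term, so $(\epsilon\otimes_{S(\gg)}\id)(\mathcal{F}_l)=1_H$ and $(\id\otimes_{S(\gg)}\epsilon)(\mathcal{F}_r)=1_H$ are immediate; for the remaining contractions Lemma~\ref{lem:exp} gives $(\id\otimes\epsilon)\exp(\partial^\sigma\otimes\hx_\sigma)=\exp(\partial^\sigma\otimes x_\sigma)$, whence $(\id\otimes\epsilon)(\tilde{\mathcal{F}}_l)=\exp(-\partial^\rho\otimes x_\rho)\exp(\partial^\sigma\otimes x_\sigma)=1\otimes 1$ because $\sum_\rho\partial^\rho\otimes x_\rho$ commutes with itself, and symmetrically for $\mathcal{F}_r$.

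For the $2$-cocycle condition \eqref{eq:2coc} I would avoid a direct expansion. By Propositions~\ref{prop:ideal} and~\ref{prop:FDFm}, $\mathcal{F}_l$ satisfies $\mathcal{F}_l I_0=I_{U(\gg)}$, has inverse $\mathcal{F}_l^{-1}\in H\hat\otimes_{U(\gg)}H$, and conjugation by $\mathcal{F}_l$ carries $\Delta_0$ to $\Delta_H$, which is coassociative (being the Hopf-algebroid coproduct of $H_\gg$ established in \cite{halg}). Hence the undeformed case of Corollary~\ref{cor:FDFimpliesDt} applies; equivalently, by the telescoping at the end of Remark~\ref{rem:cocyccoass}, coassociativity of the twisted coproduct together with the faithfulness of the action $\triangleright$ of $\hat{A}_n$ on polynomials (the undeformed case of the nondegeneracy behind Theorem~\ref{th:th4halg}, cf.\ the proof of Corollary~\ref{cor:FDFimpliesDt}) forces the two sides of \eqref{eq:2coc} to coincide in $\hat{A}_n\hat\otimes_{S(\gg)}\hat{A}_n\hat\otimes_{S(\gg)}\hat{A}_n$. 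The same reasoning applies verbatim to $\mathcal{F}_r$, so both are counital Drinfeld twists.

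It remains to match the twisted $A_\star$-bialgebroid $H_{\mathcal{F}_l}=(\hat{A}_n,\mu,\alpha_{\mathcal{F}_l},\beta_{\mathcal{F}_l},\Delta_{\mathcal{F}_l},\epsilon)$ of Theorem~\ref{th:cocycimpltw} with $H_\gg$. First I would compute the star product \eqref{eq:starfromF}: on the $\tilde{\mathcal{F}}_l$-side the telescoping sum $\sum_k\tfrac{(-1)^k}{k!}x_{\rho_1}\cdots x_{\rho_k}\partial^{\rho_1}\cdots\partial^{\rho_k}$ collapses to evaluation at $0$ and, with Lemma~\ref{lem:exp}, yields $f\star g=\xi(f)\triangleright g$; the $\tilde{\mathcal{F}}_r$-side yields $f\star g=\beta(\xi(g))\triangleright f$; by \eqref{eq:betastar} and the preceding Proposition both equal the symmetrization (Gutt) star product $\xi^{-1}(\xi(f)\cdot_{U(\gg)}\xi(g))$, so $\xi\colon A_\star=(S(\gg),\star)\to U(\gg)$ is an algebra isomorphism. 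Since $\mathcal{F}_l$ and $\mathcal{F}_r$ induce the same star product, the undeformed $k=2$ nondegeneracy (faithfulness of the Weyl algebra on polynomials, cf.\ Corollary~\ref{cor:FDFimpliesDt}) forces $\tilde{\mathcal{F}}_l-\tilde{\mathcal{F}}_r\in I_0$, i.e.\ $\mathcal{F}_l=\mathcal{F}_r$ in $H\hat\otimes_{S(\gg)}H$, so $\alpha_{\mathcal{F}_l}=\alpha_{\mathcal{F}_r}=:\alpha_{\mathcal{F}}$ and $\beta_{\mathcal{F}_l}=\beta_{\mathcal{F}_r}=:\beta_{\mathcal{F}}$. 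Evaluating the source and target formulas of Theorem~\ref{th:cocycimpltw} on $x_\mu$ — using the $\tilde{\mathcal{F}}_l$-representative for $\alpha_{\mathcal{F}}$ and the $\tilde{\mathcal{F}}_r$-representative for $\beta_{\mathcal{F}}$, so that the relevant tensor slot is a $\partial$-series and only the three lowest terms survive — gives $\alpha_{\mathcal{F}}(x_\mu)=\hx_\mu=\alpha(\xi(x_\mu))$ and $\beta_{\mathcal{F}}(x_\mu)=\hy_\mu=\beta(\xi(x_\mu))$ under the realization identification $\hat{A}_n\cong H_\gg$. As $A_\star$ is generated by the $x_\mu$ and $\alpha_{\mathcal{F}},\alpha\circ\xi$ are algebra maps while $\beta_{\mathcal{F}},\beta\circ\xi$ are antihomomorphisms, we conclude $\alpha_{\mathcal{F}}=\alpha\circ\xi$ and $\beta_{\mathcal{F}}=\beta\circ\xi$; in particular the $A_\star$-bimodule structure on $\hat{A}_n$ coincides with the $U(\gg)$-bimodule structure of $H_\gg$, so $I_{A_\star}=I_{U(\gg)}$, and Proposition~\ref{prop:ideal} with Remark~\ref{rem:invcoc} then shows $\mathcal{F}_l^\sharp$ is invertible with $\mathcal{F}_l^{\sharp -1}$ equal to left multiplication by $\mathcal{F}_l^{-1}$. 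Finally $\Delta_{\mathcal{F}_l}(h)=\mathcal{F}_l^{-1}\Delta_0(h)\mathcal{F}_l=\Delta_H(h)$ by Proposition~\ref{prop:FDFm}, and under $\xi$ the counit $\epsilon$ becomes the counit $h\mapsto\xi(h\triangleright 1)$ of $H_\gg$ (\cite{halg}); comparing total algebra, base, source, target, coproduct and counit gives $H_{\mathcal{F}_l}=H_\gg=H_{\mathcal{F}_r}$.

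The step I expect to be the main obstacle is the identification of the target map. Computing $\beta_{\mathcal{F}_l}(x_\mu)$ straight from $\tilde{\mathcal{F}}_l$ does not truncate, since the second tensor slot there involves the $\hx_\mu$'s whose $\phi$-realizations contain derivatives; this is exactly why one must first prove $\mathcal{F}_l=\mathcal{F}_r$ in $H\hat\otimes_{S(\gg)}H$ (through the star-product computation and the nondegeneracy of $\triangleright$) and only then read $\beta_{\mathcal{F}}$ off the $\tilde{\mathcal{F}}_r$-representative, whose relevant slot is a pure momentum series. A secondary nuisance, exactly as in Propositions~\ref{prop:ideal} and~\ref{prop:FDFm}, is that $H\hat\otimes_{U(\gg)}H$ is not an algebra, so every conjugation identity must be checked on representatives in $H\hat\otimes H$ modulo the appropriate right ideals, with the completions handled as in Section~\ref{s:gentwist}.
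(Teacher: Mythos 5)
Your proposal is correct and follows the same overall strategy as the paper's proof: counitality is checked on representatives using Lemma~\ref{lem:exp} (with the caveat, which you gloss over slightly, that $\epsilon$ is not multiplicative, so the chain of equalities has to be organized as in the paper rather than applying $\id\otimes\epsilon$ factorwise), the cocycle condition is deduced from Proposition~\ref{prop:FDFm} together with Corollary~\ref{cor:FDFimpliesDt}, and the twisted structure maps are matched with those of $H_\gg$ one at a time. Two sub-steps differ. For the base algebra, the paper verifies only $x_\mu\star f=\hat x_\mu\triangleright f$ (for $\mathcal{F}_l$) and $g\star x_\mu=\hat y_\mu\triangleright g$ (for $\mathcal{F}_r$) on generators and then invokes associativity of $\star$, already secured by the cocycle condition; you instead compute $f\star g$ for arbitrary $f,g$ by collapsing the $\exp(-\partial^\rho\otimes x_\rho)$ sum to evaluation at the origin via Taylor's formula. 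Your computation is valid and has the side benefit of giving $\mu\tilde{\mathcal{F}}_l(\triangleright\otimes\triangleright)=\mu\tilde{\mathcal{F}}_r(\triangleright\otimes\triangleright)$ on the nose, which combined with the undeformed nondegeneracy is exactly how the paper proves $\mathcal{F}_l=\mathcal{F}_r$ (stated there as a corollary \emph{after} the theorem rather than inside the proof). For the target map, the obstacle you flag is real but the paper does not take your detour through $\mathcal{F}_l=\mathcal{F}_r$: it computes $\beta_{\mathcal{F}_l}(x_\mu)$ directly by flipping the tensor factors and treating the $\partial$'s in the second slot as commuting formal scalars $k^\sigma$, so that $\exp(k^\sigma\hat x_\sigma)\triangleright x_\mu=x_\tau\tilde\phi^\tau_\mu(k)\exp(k^\sigma x_\sigma)$ and the exponential factors cancel against $\exp(-x_\rho k^\rho)$, yielding $\hat y_\mu$; both routes reach the same answer, and your reordering (establishing $\mathcal{F}_l=\mathcal{F}_r$ first, then reading $\beta_{\mathcal{F}}$ off the $\tilde{\mathcal{F}}_r$-representative) is logically sound given the nondegeneracy theorem. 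There is no gap.
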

\begin{proof} Both $\mathcal{F}_l$ and $\mathcal{F}_r$ are invertible.
Proposition~\ref{prop:FDFm} and Corollary~\ref{cor:FDFimpliesDt}
together imply that the Drinfeld cocycle condition~(\ref{eq:2coc}) holds.
We need to show the counitality. One has to be careful when checking this,
regarding that $\epsilon\colon H_\gg\to U(\gg)$ is not a homomorphism.
However, for the symmetric ordering, checking this is still not a difficult.
Recall from the axioms of the bialgebroid
that the undeformed counit is given by $\epsilon(h) = h\triangleright 1$. Thus
\begin{gather*}
(\epsilon\otimes 1)\mathcal{F}_l = \exp(-\partial^\rho\otimes x_\rho)\exp(\partial^\sigma\otimes\hx_\sigma)(\triangleright 1\otimes \cdot 1)
= 1\otimes 1,
\end{gather*}
because all the higher order terms have positive power of at least some $\partial^\alpha$-s thus yielding zero when acting upon $1$.
The second counitality condition is a bit more involved; using the fact that~$\triangleright$ extends the regular action of $U(\gg)$ on itself and~(\ref{eq:exp2exp}) we compute
\begin{align*}
(1\otimes \epsilon)\mathcal{F}_l &=\exp(-\partial^\rho\otimes x_\rho)\exp(\partial^\sigma\otimes\hx_\sigma)(\cdot 1\otimes \triangleright 1)\\
&= \exp(-\partial^\rho\otimes x_\rho)(\id\otimes \triangleright)\exp(\partial^\sigma\otimes x_\sigma)\\
&= \exp(-\partial^\rho\otimes x_\rho)(\id\otimes \triangleright)\exp(\partial^\sigma\otimes x_\sigma)(\cdot 1\otimes \triangleright 1)\\
&=(\exp(-\partial^\rho\otimes x_\rho)\exp(\partial^\sigma\otimes x_\sigma))(\cdot 1\otimes \triangleright 1)\\
&= 1\otimes 1.
\end{align*}
In the third line we used Lemma~\ref{lem:exp}. Similarly, one shows that $\mathcal{F}_r$ is counital.

The new base algebra is $S(\gg)$ with the $\mathcal{F}_l$-twisted (or $\mathcal{F}_r$-twisted) star product. We need to show that it is canonically isomorphic to $U(\gg)$ in the sense that
\begin{gather*}
\mu \mathcal{F}_l(\triangleright\otimes\triangleright)(g\otimes f) =\mu \mathcal{F}_r(\triangleright\otimes\triangleright)(g\otimes f)
= \xi^{-1}(\xi(g)\cdot_{U(\gg)}\xi(f)),\qquad g,f\in S(\gg),
\end{gather*}
where $\xi\colon S(\gg)\stackrel\cong\to U(\gg)$ is the symmetrization map. Since we know that $\mathcal{F}_l,\mathcal{F}_r$ are Drinfeld cocycles, we know that in both cases the corresponding star product is associative. Regarding that $\xi$ transports the product in $U(\gg)$ to the star product $g\star f := \xi^{-1}(\xi(g)\cdot_{U(\gg)}\xi(f)) = \xi(g)\triangleright f$ every element in $S(\gg)$ is a star polynomial in the generators in $\gg$. Therefore the associativity and the star products of the form $x_\mu\star f$ with general $f$ (or
alternatively, all star products of the form $g\star x_\mu$) are sufficient to determine the star product $g\star f$ for general $g$,~$f$. Thus for $\mathcal{F}_l$ it is sufficient to check that $\mu \mathcal{F}_l(\triangleright x_\mu\otimes \triangleright f) = \hat{x}_\mu\triangleright f = \hat{x}_\mu^\phi(f)$ for all $f$ and $\mu \mathcal{F}_r(\triangleright g\otimes \triangleright x_\mu) = \hat{y}_\mu\triangleright g = \hat{y}^\phi_\mu(g)$ for all $g$. When acting by $\triangleright$ we take the realization in the Weyl algebra and apply the corresponding differential operator. Thus all the higher derivatives drop out when acting on $x_\mu$ and we obtain $\mathcal{F}_l(\triangleright x_\mu\otimes \triangleright f) = x_\mu\otimes f - 1\otimes x_\mu f + 1\otimes\hat{x}^\phi_\mu(f)$. After applying $\mu$ on this equality, the first two summands cancel and we obtain $\hat{x}^\phi_\mu(f)$. Similarly, $\mathcal{F}_r(\triangleright g\otimes \triangleright x_\mu)= \hat{y}_\mu(g)\otimes 1 - g\otimes x_\mu + x_\mu g\otimes 1$ and after applying $\mu$ the second summand cancels with the third and we obtain $\mu\mathcal{F}_r(\triangleright g\otimes \triangleright x_\mu) =\hat{y}_\mu\triangleright g \stackrel{(\ref{eq:betastar})}= g\star x_\mu$ as required.

Finally, we need to compute the deformed source and target map from the twist and compare them with the known source and target map in the deformed case. Regarding that we already know that $\mathcal{F}_l$ and $\mathcal{F}_r$ are Drinfeld twists, Xu's theorem tells us that $\alpha_{\mathcal{F}_l}$, $\beta_{\mathcal{F}_l}$, $\alpha_{\mathcal{F}_r}$, $\beta_{\mathcal{F}_r}$ are algebra maps by the construction, it is sufficient to check the agreement with known deformed source and target maps on the algebra generators~$x_\mu$
\begin{gather*}
\alpha_{\mathcal{F}_l}(f)=\mu(\alpha_0\otimes\id)\exp(-\partial^\rho\otimes x_\rho)\exp(\partial^\sigma\otimes\hx_\sigma)(\triangleright f\otimes 1),
\end{gather*}
where $\alpha_0=\beta_0$ is the source map in the undeformed case, which is then equal to the undeformed target map. When $f = x_\mu$, after expanding the exponentials only three summands survive, with at most one partial derivative before applying $\triangleright$. Thus we obtain $\alpha_{\mathcal{F}_L}(x_\mu) = \alpha(x_\mu)-x_\mu+\hat{x}_\mu = \hat{x}_\mu = \alpha(\xi(x_\mu))$ as required. For $\beta$ we first interchange
the tensor factors in $\mathcal{F}_l$,
\begin{gather*}
\beta_{\mathcal{F}_l}(f)=\mu(\beta_0\otimes\id)\exp(-x_\rho\otimes\partial^\rho)\exp(\hx_\sigma\otimes\partial^\sigma)(\triangleright f\otimes 1).
\end{gather*}
Regarding that the second tensor factor commutes, we can compute $\exp(-x_\rho\otimes\partial^\rho)\exp(\hx_\sigma\otimes\partial^\sigma)(\triangleright x_\mu\otimes 1)$ as if $\partial$-s are independent formal variables. Now $\exp(k^\sigma\hx_\sigma)\triangleright x_\mu = \exp(k^\sigma x_\sigma)\star x_\mu =
\hy_\mu\triangleright\exp(k^\sigma x_\sigma)= x_\tau\tilde{\phi}^\tau_\mu(k)\exp(k^\sigma x_\sigma)$. Then, the exponential factors cancel and we get $\beta_{\mathcal{F}_l}(x_\mu) = x_\tau\phi^\tau_\mu(\partial) = \hy_\mu = \beta(\xi(x_\mu))$ as required.
\begin{gather*}
\alpha_{\mathcal{F}_r}(f)=\mu(\alpha_0\otimes\id)\exp(-x_\rho\otimes\partial^\rho)\exp(\hy_\sigma\otimes\partial^\sigma)(\triangleright f\otimes 1).
\end{gather*}
For $f = x_\mu$ calculate $\exp(\hy_\sigma\otimes\partial^\sigma)(\triangleright x_\mu\otimes 1) = (x_\mu\otimes 1)(\star\otimes\cdot)\exp(x_\sigma\otimes\partial^\sigma) = (\hx_\mu\otimes 1)(\triangleright \otimes\cdot)\exp(x_\sigma\otimes\partial^\sigma) = \exp(x_\sigma\otimes\partial^\sigma)(x_\tau\otimes\phi^\tau_\mu)$, hence $\alpha_{\mathcal{F}_r}(x_\mu) = \alpha_0(x_\tau)\phi^\tau_\mu=\hx_\mu=\alpha(\xi(x_\mu))$ as required.
\begin{gather*}
\beta_{\mathcal{F}_r}(f)=\mu(\beta_0\otimes\id)\exp(-\partial^\rho\otimes x_\rho)\exp(\partial^\sigma\otimes\hy_\sigma)(\triangleright f\otimes 1).
\end{gather*}
For $f = x_\mu$ only the three summands with partial derivatives up to the first order survive after applying $\triangleright x_\mu$. Thus
\begin{gather*}
\beta_{\mathcal{F}_r}(x_\mu) = \mu(\alpha_0\otimes\id)(x_\mu\otimes 1 - 1\otimes x_\mu+1\otimes\hy_\mu) = \hy_\mu = \beta(\xi(x_\mu))
\end{gather*}
as required.
\end{proof}

\begin{Corollary} $\tilde{\mathcal{F}}_l - \tilde{\mathcal{F}}_r \in I_{U(\gg)}$, $\mathcal{F}_l = \mathcal{F}_r$.
\end{Corollary}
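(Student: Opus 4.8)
The plan is to establish the two assertions separately, each by testing against a suitable action and invoking the $k=2$ case of the nondegeneracy Theorem~\ref{th:th4halg} (in the deformed case) and of its undeformed counterpart used in Corollary~\ref{cor:FDFimpliesDt} (the faithfulness of $\hat A_n$ on polynomials): an element $r\in H\hat\otimes H$ lies in $I_0$ (respectively in $I_{U(\gg)}$) if and only if $\mu\bigl(r(\triangleright\otimes\triangleright)(f\otimes g)\bigr)=0$ for all $f,g\in S(\gg)$ (respectively $\mu\bigl(r(\blacktriangleright_\gg\otimes\blacktriangleright_\gg)(u\otimes v)\bigr)=0$ for all $u,v\in U(\gg)$).

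First I would prove $\mathcal{F}_l=\mathcal{F}_r$, that is $\tilde{\mathcal{F}}_l-\tilde{\mathcal{F}}_r\in I_0$. By Theorem~\ref{th:main} both twists realize the same star product on the base $S(\gg)$: $\mu\,\tilde{\mathcal{F}}_l(\triangleright\otimes\triangleright)(f\otimes g)=\mu\,\tilde{\mathcal{F}}_r(\triangleright\otimes\triangleright)(f\otimes g)=\xi^{-1}\bigl(\xi(f)\cdot_{U(\gg)}\xi(g)\bigr)$ for all $f,g\in S(\gg)$. Hence $\tilde{\mathcal{F}}_l-\tilde{\mathcal{F}}_r$ annihilates $S(\gg)\otimes S(\gg)$ under $\mu\circ(\triangleright\otimes\triangleright)$, and the undeformed nondegeneracy forces $\tilde{\mathcal{F}}_l-\tilde{\mathcal{F}}_r\in I_0$.

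For $\tilde{\mathcal{F}}_l-\tilde{\mathcal{F}}_r\in I_{U(\gg)}$ I would, by the deformed nondegeneracy, reduce to showing that $\tilde{\mathcal{F}}_l$ and $\tilde{\mathcal{F}}_r$ act identically through $\mu\circ(\blacktriangleright_\gg\otimes\blacktriangleright_\gg)$ on $U(\gg)\otimes U(\gg)$. Writing $u=\xi(f)$, $v=\xi(g)$ and using that the symmetrization $\xi\colon S(\gg)\to U(\gg)$ intertwines the Fock action with $\blacktriangleright_\gg$, so that $h\blacktriangleright_\gg\xi(p)=\xi(h\triangleright p)$ (a consequence of the identifications recorded in the Proposition of Section~\ref{s:repeathalg} together with Theorem~\ref{th:main}), together with $\xi(p)\cdot_{U(\gg)}\xi(q)=\xi(p\star q)$ and $p\star q=\mu\,\tilde{\mathcal{F}}_l(\triangleright\otimes\triangleright)(p\otimes q)=\mu\,\tilde{\mathcal{F}}_r(\triangleright\otimes\triangleright)(p\otimes q)$, one rewrites, for $\bullet\in\{l,r\}$,
\begin{gather*}
\mu\,\tilde{\mathcal{F}}_\bullet(\blacktriangleright_\gg\otimes\blacktriangleright_\gg)(\xi(f)\otimes\xi(g))
=\xi\Bigl(\textstyle\sum_k\bigl(\tilde{\mathcal{F}}_\bullet^{(1),k}\triangleright f\bigr)\star\bigl(\tilde{\mathcal{F}}_\bullet^{(2),k}\triangleright g\bigr)\Bigr).
\end{gather*}
Thus the problem becomes the identity $\sum_k(\tilde{\mathcal{F}}_l^{(1),k}\triangleright f)\star(\tilde{\mathcal{F}}_l^{(2),k}\triangleright g)=\sum_k(\tilde{\mathcal{F}}_r^{(1),k}\triangleright f)\star(\tilde{\mathcal{F}}_r^{(2),k}\triangleright g)$ in $(S(\gg),\star)$, which I would verify by expanding the exponentials in $\tilde{\mathcal{F}}_l=\exp(-\partial^\rho\otimes x_\rho)\exp(\partial^\sigma\otimes\hx_\sigma)$ and $\tilde{\mathcal{F}}_r=\exp(-x_\rho\otimes\partial^\rho)\exp(\hy_\sigma\otimes\partial^\sigma)$ and using $\partial^\rho\triangleright p=\partial^\rho(p)$, $x_\rho\triangleright p=x_\rho p$, $\hx_\sigma\triangleright p=x_\sigma\star p$ and $\hy_\sigma\triangleright p=p\star x_\sigma$ from the Proposition of Section~\ref{s:repeathalg} and from Lemma~\ref{lem:exp}.

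The hard part is exactly this last identity. It does \emph{not} follow formally from $\mathcal{F}_l=\mathcal{F}_r$ together with $\tilde{\mathcal{F}}_\bullet^{-1}I_0=I_{U(\gg)}$ and $\tilde{\mathcal{F}}_\bullet I_{U(\gg)}=I_0$ (Proposition~\ref{prop:ideal}), because left multiplication by $\tilde{\mathcal{F}}_\bullet$ does not preserve $I_0$, so one cannot simply cancel a representative after multiplying; indeed for a general $c\in I_0$ the expression $\sum_k(c^{(1)}_k\triangleright f)\star(c^{(2)}_k\triangleright g)$ need not vanish. One therefore has to carry out the computation directly; using the associativity of $\star$, the fact that $S(\gg)$ is star-generated by $\gg$, and the recursion produced by $\hx_\sigma\triangleright p=x_\sigma\star p$, it suffices to treat the cases in which one of the two arguments is a single generator $x_\mu$ (the general case following by a routine, if bookkeeping-heavy, induction), where both sides reduce to the same expression. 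Granting the identity, the deformed nondegeneracy yields $\tilde{\mathcal{F}}_l-\tilde{\mathcal{F}}_r\in I_{U(\gg)}$, which together with the first part completes the proof.
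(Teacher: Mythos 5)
Your first assertion is handled exactly as in the paper: both $\tilde{\mathcal{F}}_l$ and $\tilde{\mathcal{F}}_r$ compute the same star product under $\mu\circ(\triangleright\otimes\triangleright)$, and the undeformed case of the nondegeneracy Theorem~\ref{th:th4halg} then places their difference in the kernel of $H\hat\otimes H\to H\hat\otimes_{S(\gg)}H$, giving $\mathcal{F}_l=\mathcal{F}_r$. That part is correct and is the paper's own argument.

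The gap is in the clause $\tilde{\mathcal{F}}_l-\tilde{\mathcal{F}}_r\in I_{U(\gg)}$. You are right that this is a genuinely different statement from $\tilde{\mathcal{F}}_l-\tilde{\mathcal{F}}_r\in I_0$: the deformed nondegeneracy tests an element against $\mu_{U(\gg)}\circ(\blacktriangleright_\gg\otimes\blacktriangleright_\gg)$, which after transport along $\xi$ replaces the commutative product of $\tilde{\mathcal{F}}^{(1)}\triangleright f$ and $\tilde{\mathcal{F}}^{(2)}\triangleright g$ by their $\star$-product, and you are also right that membership in $I_{U(\gg)}$ does not follow formally from $\tilde{\mathcal{F}}_\bullet^{-1}I_0=I_{U(\gg)}$ and $\tilde{\mathcal{F}}_\bullet I_{U(\gg)}=I_0$. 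But having isolated the key identity $\sum_k\big(\tilde{\mathcal{F}}_l^{(1),k}\triangleright f\big)\star\big(\tilde{\mathcal{F}}_l^{(2),k}\triangleright g\big)=\sum_k\big(\tilde{\mathcal{F}}_r^{(1),k}\triangleright f\big)\star\big(\tilde{\mathcal{F}}_r^{(2),k}\triangleright g\big)$, you do not prove it, and the reduction you propose is not available: for a fixed $c\in H\hat\otimes H$ the expression $\sum_k(c^{(1),k}\triangleright f)\star(c^{(2),k}\triangleright g)$ is merely a bilinear map of $(f,g)$, and, unlike in the proof of Theorem~\ref{th:main} where an associative product is reconstructed from the values $x_\mu\star f$, no associativity or star-generation argument determines such a bilinear map from the cases where one argument is a single generator. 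So the ``routine, if bookkeeping-heavy, induction'' is an unjustified step, and this clause remains unproved in your write-up. In fairness, the paper's own proof of this clause is also only the one-line nondegeneracy argument, which as written yields membership in the ideal detected by the undeformed action (i.e., $I_0$) rather than in $I_{U(\gg)}$; your proposal at least makes the missing step explicit. A more workable route to the stronger claim would be to express $\tilde{\mathcal{F}}_l-\tilde{\mathcal{F}}_r$ directly in terms of the generators $\hat y_\nu\otimes 1-1\otimes\hat x_\nu$ of $I_{U(\gg)}$, in the spirit of Lemma~\ref{lem:DeltaFx}(ii) and Proposition~\ref{prop:ideal}, rather than to argue through the actions.
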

\begin{proof} In the proof of Theorem~\ref{th:main} we have shown $\mu\tilde{\mathcal{F}}_l(\triangleright\otimes\triangleright)(f\otimes g)
= f\star g = \mu\tilde{\mathcal{F}}_r(\triangleright\otimes\triangleright)(f\otimes g)$ for all $f,g\in\hat{S}(\gg)$. Thus $\tilde{\mathcal{F}}_l -\tilde{\mathcal{F}}_r\in I_{U(\gg)}$ from the undeformed case of the Theorem~\ref{th:th4halg} on nondegeneracy. Therefore for the cosets we conclude $\mathcal{F}_l = \tilde{\mathcal{F}_l}+I_{U(\gg)} = \tilde{\mathcal{F}_r} + I_{U(\gg)} =\mathcal{F}_r$.
\end{proof}

\section{Twisting the antipode}\label{sec:antipode}

In this section, we discuss the twisting of the antipode. We first recall the definition of Hopf algebroids as bialgebroids with an antipode and then we recall from~\cite{halg} the antipode for $H_\gg$.

Several nonequivalent versions of the axioms for the antipode are used in the literature (see, e.g.,~\cite{bohmHbk,Bohm,lu,twosha}). In~\cite{halg} we checked for~$H_\gg$ the axioms of the symmetric Hopf algebroid, which involve both a left and a right bialgebroid. Thus a reasonable formalism for the twisting of symmetric Hopf algebroids is expected to provide a twist for the left and another twist for the right bialgebroid, these twists satisfying some compatibilities. Instead of taking this not so obvious path, we here work with a version of the axioms for the antipode involving only the left bialgebroid. We can do this because the antipode for $H_\gg$ is invertible both in the undeformed and deformed case. Namely, if the antipode is invertible, as proven by B\"ohm, the structure of a~symmetric Hopf $A$-algebroid~$H$ is equivalent to a left $A$-bialgebroid together with an antipode~$S$ which is an algebra antihomomorphism $H\to H$ having an inverse~$S^{-1}$ satisfying for all $h\in H$ the relations
\begin{gather*}
S\circ\beta = \alpha,\\
\big(S^{-1} h_{(2)}\big)_{(1)}\otimes_A\big(S^{-1} h_{(2)}\big)_{(2)}h_{(1)} = S^{-1} h\otimes_A 1_H,\\
\big(S h_{(1)}\big)_{(1)} h_{(2)}\otimes_A\big(S h_{(1)}\big)_{(2)} = 1_H\otimes_A S h.
\end{gather*}

If a bialgebroid twist $\mathcal{F} = \mathcal{F}^{(1)}\otimes_A\mathcal{F}^{(2)}$ on a Hopf algebroid $H$ has an inverse cocycle $\mathcal{F}^{-1}=\overline{\mathcal{F}}^{(1)}\otimes_{A_\star}\overline{\mathcal{F}}^{(2)}$ in the sense of Remark~\ref{rem:cocyccoass}, then $V_{\mathcal{F}} = \big(S\mathcal{F}^{(1)}\big)\mathcal{F}^{(2)}$ is a well-defined element in~$H$. If~$H$ is a Hopf algebra then a standard calculation shows that $\overline{\mathcal{F}}^{(1)}\big(S\overline{\mathcal{F}}^{(2)}\big)$ is the two-sided inverse of~$V$ with respect to the multiplication in $H$. The calculation does not extend to Hopf algebroids, namely not only that $\overline{\mathcal{F}}^{(1)}\big(S\overline{\mathcal{F}}^{(2)}\big)$ is not the inverse of $V$, but worse, $\overline{\mathcal{F}}^{(1)}\big(S\overline{\mathcal{F}}^{(2)}\big)$ is not a well defined expression because $\mu(\id\otimes S)I_{A_\star}\neq 0$. We do not know if the inverse of $V_{\mathcal{F}}$ exists in general.

\begin{Proposition}\label{prop:VFSF}
Suppose that $V_{\mathcal{F}} = \big(S\mathcal{F}^{(1)}\big)\mathcal{F}^{(2)}$ has an inverse $V^{-1}_{\mathcal{F}}$ in $H$. Then define
\begin{gather}\label{eq:SF}
S_{\mathcal{F}} h = V^{-1}_{\mathcal{F}} (S h) V_{\mathcal{F}} = V^{-1}_{\mathcal{F}} (S h) \big(S\mathcal{F}^{(1)}\big)\mathcal{F}^{(2)}.
\end{gather}
The formula $h\mapsto S_{\mathcal{F}} h$ then defines an antihomomorphism of algebras $S_{\mathcal{F}}\colon H\to H$ and
\begin{gather}\label{eq:VmF}
V^{-1}_{\mathcal{F}}= \big(S_{\mathcal{F}}\overline{\mathcal{F}}^{(1)}\big)\overline{\mathcal{F}}^{(2)},
\end{gather}
where the right-hand side is well defined and in particular $\mu(S_{\mathcal{F}}\otimes\id) I_{A_\star} = 0$.
\end{Proposition}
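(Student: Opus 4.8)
The plan is to verify three things in turn: that $S_{\mathcal{F}}$ is an algebra antihomomorphism, that $\mu(S_{\mathcal{F}}\otimes\id)I_{A_\star}=0$ so that the right-hand side of \eqref{eq:VmF} is a legitimate element of $H$, and then the identity \eqref{eq:VmF} itself. The first is immediate: $S$ is an antihomomorphism by hypothesis, and $h\mapsto V^{-1}_{\mathcal{F}}hV_{\mathcal{F}}$ is an algebra automorphism of $H$, so the composite $S_{\mathcal{F}}=V^{-1}_{\mathcal{F}}S(-)V_{\mathcal{F}}$ is again an antihomomorphism; in particular $S_{\mathcal{F}}(1_H)=1_H$.

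The heart of the matter is the relation $S_{\mathcal{F}}\circ\beta_{\mathcal{F}}=\alpha_{\mathcal{F}}$, which I would obtain from Xu's inclusion $\mathcal{F}\cdot I_{A_\star}\subset I_A$ of Theorem~\ref{th:cocycimpltw} rather than by manipulating the $2$-cocycle condition directly. One first records the elementary fact $\mu(S\otimes\id)I_A=0$, which follows from $S\circ\beta=\alpha$ and the antihomomorphism property because $I_A$ is the right ideal of $H\otimes H$ generated by the elements $\beta(a)\otimes 1-1\otimes\alpha(a)$. Now fix a representative of $\mathcal{F}$ in $H\otimes H$. Since $\beta_{\mathcal{F}}(a)\otimes 1-1\otimes\alpha_{\mathcal{F}}(a)$ generates $I_{A_\star}$, the element $\mathcal{F}\cdot\big(\beta_{\mathcal{F}}(a)\otimes 1-1\otimes\alpha_{\mathcal{F}}(a)\big)$ lies in $I_A$ and is therefore killed by $\mu(S\otimes\id)$; computing the two summands with $V_{\mathcal{F}}=\mu(S\otimes\id)(\mathcal{F})=(S\mathcal{F}^{(1)})\mathcal{F}^{(2)}$ and using that $S$ reverses products yields $S(\beta_{\mathcal{F}}(a))V_{\mathcal{F}}=V_{\mathcal{F}}\alpha_{\mathcal{F}}(a)$, whence $S_{\mathcal{F}}(\beta_{\mathcal{F}}(a))=V^{-1}_{\mathcal{F}}S(\beta_{\mathcal{F}}(a))V_{\mathcal{F}}=\alpha_{\mathcal{F}}(a)$. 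Once this is in hand, the same computation that proves $\mu(S\otimes\id)I_A=0$, applied now with $S_{\mathcal{F}},\alpha_{\mathcal{F}},\beta_{\mathcal{F}}$ in place of $S,\alpha,\beta$ and with $I_{A_\star}$ generated by $\beta_{\mathcal{F}}(a)\otimes 1-1\otimes\alpha_{\mathcal{F}}(a)$, gives $\mu(S_{\mathcal{F}}\otimes\id)I_{A_\star}=0$, so $\big(S_{\mathcal{F}}\overline{\mathcal{F}}^{(1)}\big)\overline{\mathcal{F}}^{(2)}$ is well defined.

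For the identity \eqref{eq:VmF} I would then compute directly, choosing representatives $\mathcal{F}=\mathcal{F}^{(1)}\otimes\mathcal{F}^{(2)}$ and $\mathcal{F}^{-1}=\overline{\mathcal{F}}^{(1)}\otimes\overline{\mathcal{F}}^{(2)}$ in $H\otimes H$:
\begin{gather*}
\big(S_{\mathcal{F}}\overline{\mathcal{F}}^{(1)}\big)\overline{\mathcal{F}}^{(2)}=V^{-1}_{\mathcal{F}}\big(S\overline{\mathcal{F}}^{(1)}\big)V_{\mathcal{F}}\overline{\mathcal{F}}^{(2)}=V^{-1}_{\mathcal{F}}\big(S\overline{\mathcal{F}}^{(1)}\big)\big(S\mathcal{F}^{(1)}\big)\mathcal{F}^{(2)}\overline{\mathcal{F}}^{(2)}=V^{-1}_{\mathcal{F}}\,\mu(S\otimes\id)\big(\mathcal{F}\mathcal{F}^{-1}\big),
\end{gather*}
where I substituted $V_{\mathcal{F}}=(S\mathcal{F}^{(1)})\mathcal{F}^{(2)}$ and used that $S$ is an antihomomorphism. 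Since $\mathcal{F}\mathcal{F}^{-1}=1\otimes 1+r$ with $r\in I_A$, and $\mu(S\otimes\id)$ annihilates $I_A$ while sending $1\otimes 1$ to $1_H$, the right-hand side collapses to $V^{-1}_{\mathcal{F}}$, which is \eqref{eq:VmF}.

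I expect the only real difficulty to be the bookkeeping of representatives and of the three ambient tensor products $H\otimes_A H$, $H\otimes_{A_\star}H$ and $H\otimes H$, checking at each step that the manipulation is independent of the chosen representative; the facts carrying that burden are $\mathcal{F}\cdot I_{A_\star}\subset I_A$, $\mathcal{F}\mathcal{F}^{-1}=1\otimes 1+I_A$, $\mu(S\otimes\id)I_A=0$ and the relation $S_{\mathcal{F}}\circ\beta_{\mathcal{F}}=\alpha_{\mathcal{F}}$ just established. The one step that is not pure routine is using Xu's inclusion to get $S_{\mathcal{F}}\circ\beta_{\mathcal{F}}=\alpha_{\mathcal{F}}$ almost for free, which allows the whole argument to bypass any explicit use of the cocycle identity.
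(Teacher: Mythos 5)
Your proposal is correct, and its central computation is the same as the paper's: expand $S_{\mathcal{F}}\overline{\mathcal{F}}^{(1)}$ via conjugation by $V_{\mathcal{F}}$, substitute $V_{\mathcal{F}}=\big(S\mathcal{F}^{(1)}\big)\mathcal{F}^{(2)}$, and recognize $\big(S\overline{\mathcal{F}}^{(1)}\big)\big(S\mathcal{F}^{(1)}\big)\mathcal{F}^{(2)}\overline{\mathcal{F}}^{(2)}=\mu(S\otimes\id)\big(\mathcal{F}\mathcal{F}^{-1}\big)=1$, which rests on $\mathcal{F}\mathcal{F}^{-1}=1\otimes 1+I_A$ and $\mu(S\otimes\id)I_A=0$. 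Where you genuinely diverge is in the treatment of well-definedness. The paper computes the product $\big(S_{\mathcal{F}}\overline{\mathcal{F}}^{(1)}\big)\overline{\mathcal{F}}^{(2)}\cdot V_{\mathcal{F}}$ in both orders, obtains $1$ each time, and then extracts both the identity~\eqref{eq:VmF} and the representative-independence of its right-hand side \emph{a posteriori} from the uniqueness of two-sided inverses in an associative algebra. You instead establish $\mu(S_{\mathcal{F}}\otimes\id)I_{A_\star}=0$ up front, by first proving the intertwining relation $S(\beta_{\mathcal{F}}(a))V_{\mathcal{F}}=V_{\mathcal{F}}\alpha_{\mathcal{F}}(a)$, i.e., $S_{\mathcal{F}}\circ\beta_{\mathcal{F}}=\alpha_{\mathcal{F}}$, out of Xu's inclusion $\mathcal{F}\cdot I_{A_\star}\subset I_A$ and the generation of $I_{A_\star}$ by $\beta_{\mathcal{F}}(a)\otimes 1-1\otimes\alpha_{\mathcal{F}}(a)$. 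This buys two things the paper's proof does not make explicit: the right-hand side of~\eqref{eq:VmF} is seen to be a legitimate element of $H$ before any inverse computation, so you may then evaluate it directly in one pass; and you obtain for free the relation $S_{\mathcal{F}}\circ\beta_{\mathcal{F}}=\alpha_{\mathcal{F}}$, which is precisely the first of the antipode axioms listed in Section~\ref{sec:antipode} and is therefore a useful step toward the (open) question of whether $S_{\mathcal{F}}$ is an antipode for the twisted bialgebroid. The only bookkeeping point worth stating explicitly in your write-up is the one you already flag: the element $V_{\mathcal{F}}=\mu(S\otimes\id)(\tilde{\mathcal{F}})$ is independent of the chosen representative $\tilde{\mathcal{F}}\in H\otimes H$ precisely because $\mu(S\otimes\id)I_A=0$, and likewise $\mathcal{F}\cdot(\tilde{\mathcal{F}}^{-1}+r)$ for $r\in I_{A_\star}$ changes only by an element of $I_A$, so every step of your chain is representative-independent.
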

\begin{proof}
$S$ is an antihomomorphism hence it is clear that $S_{\mathcal{F}}$ given by~(\ref{eq:SF}) is an antihomomorphism as well. It follows that $\big(S\overline{\mathcal{F}}^{(1)}\big)\big(S\mathcal{F}^{(1)'}\big)\mathcal{F}^{(2)'}\overline{\mathcal{F}}^{(2)} = 1$. Here the primed Sweedler indi\-ces~$(1)'$,~$(2)'$ refer to another copy of $\mathcal{F}$
\begin{align*}
\big(S_{\mathcal{F}}\overline{\mathcal{F}}^{(1)}\big)\overline{\mathcal{F}}^{(2)}\big(S\mathcal{F}^{(1)}\big)\mathcal{F}^{(2)} & =
V^{-1}_{\mathcal{F}}\big(S\overline{\mathcal{F}}^{(1)}\big)V_{\mathcal{F}}\overline{\mathcal{F}}^{(2)}\big(S\mathcal{F}^{(1)}\big)\mathcal{F}^{(2)}
\\ & = V^{-1}_{\mathcal{F}}\big(S\overline{\mathcal{F}}^{(1)}\big)\big(S\mathcal{F}^{(1)'}\big)\mathcal{F}^{(2)'}\overline{\mathcal{F}}^{(2)}
\big(S\mathcal{F}^{(1)}\big)\mathcal{F}^{(2)}
\\ & = V^{-1}_{\mathcal{F}}(S\mathcal{F}^{(1)})\mathcal{F}^{(2)}
\\ & = 1,
\\
\big(S\mathcal{F}^{(1)}\big)\mathcal{F}^{(2)}\big(S_{\mathcal{F}}\overline{\mathcal{F}}^{(1)}\big)\overline{\mathcal{F}}^{(2)} & = \big(S\mathcal{F}^{(1)}\big)\mathcal{F}^{(2)}V^{-1}_{\mathcal{F}}\big(S\overline{\mathcal{F}}^{(1)}\big)V_{\mathcal{F}}\overline{\mathcal{F}}^{(2)}
\\ & = \big(S\mathcal{F}^{(1)}\big)\mathcal{F}^{(2)}V^{-1}_{\mathcal{F}}\big(S\overline{\mathcal{F}}^{(1)}\big)
\big(S\mathcal{F}^{(1)'}\big)\mathcal{F}^{(2)'}\overline{\mathcal{F}}^{(2)}
\\ & = \big(S\mathcal{F}^{(1)}\big)\mathcal{F}^{(2)}V^{-1}_{\mathcal{F}}
\\ & = 1.
\end{align*}

Regarding that the two-sided inverse in an associative algebra is unique, we conclude~(\ref{eq:VmF}) with the right-hand side in~(\ref{eq:VmF}) well defined.
\end{proof}

One would like to conclude that $S_{\mathcal{F}}$ is an antipode for the twisted bialgebroid. The standard proofs for the Hopf algebras do not seem to generalize in straightforward manner.

However, in our case, for $H_\gg$, we know the deformed antipode $S$, and one can say a bit more. For $H_\gg\cong\hat{A}_n$, the antipode $S_0$ for the undeformed coproduct is a continuous antihomomorphism $S_0\colon H_\gg\to H_\gg$ given on the generators $x_\mu,\partial^\nu$ of the dense subalgebra $A_n$ by $S_0(x_\mu)=x_\mu$, $S_0(\partial^\nu) = -\partial^\nu$. Similarly, the antipode $S\colon H_\gg\to H_\gg$ for the deformed coproduct is determined by the formulas
\begin{gather*}
S\hat{y}_\mu = \hat{x}_\mu,\qquad S\partial^\nu = S_0\partial^\nu = -\partial^\nu.
\end{gather*}
 Therefore using $\hat{x}_\mu = x_\rho \phi(\partial)^\rho_\mu$ and $\hat{y}_\mu = x_\rho \phi(-\partial)^\rho_\mu$
\begin{gather*}
S_0 \hat{y}_\mu = S_0(x_\rho \phi(-\partial)^\rho_\mu) = \phi(\partial)^\rho_\mu x_\rho = x_\rho\phi(\partial)^\rho_\mu + \partial_\rho\phi(\partial)^\rho_\mu,
\end{gather*}
where we denoted $\partial_\rho = \frac{\partial}{\partial(\partial^\rho)}$.

We seek for $V$ such that $S(h) = V^{-1}S_0(h)V$, in parallel to equation~(\ref{eq:SF}) defining $S_{\mathcal{F}}$ in terms of $V_{\mathcal{F}}$. Regarding that $S(\partial^\mu) = S_0(\partial^\mu)= -\partial^\mu$, this forces that $[\partial^\mu,V] = 0$, hence $V = V(\partial^1,\dots,\partial^n)\in\hat{S}(\gg^*)$. Moreover,
\begin{gather}
S \hat{y}_\mu = x_\rho\phi^\rho_\mu = V^{-1}\big(x_\rho\phi(\partial)^\rho_\mu + \partial_\rho\phi(\partial)^\rho_\mu\big)V = V^{-1}x_\rho V\phi^\rho_\mu + \partial_\rho\phi^\rho_\mu,\nonumber\\
x_\mu - \big(\partial_\rho \phi^\rho_\gamma\big)\phi^{-1\gamma}_\mu = V^{-1} x_\mu V,\nonumber\\
V^{-1} [V,x_\mu] = (\partial_\rho\phi^\rho_\gamma)\phi^{-1\gamma}_\mu,\nonumber\\
\partial_\mu{\ln|V|} = V^{-1}\partial_\mu V = \phi^{-1\gamma}_\mu\partial_\rho\phi^\rho_\gamma,\nonumber\\
R := \ln|V|,\nonumber\\
\partial_\mu R = \phi^{-1\gamma}_\mu\partial_\rho\phi^\rho_\gamma.\label{eq:R1}
\end{gather}
Setting $F_\mu = \phi^{-1\gamma}_\mu\partial_\rho\phi^\rho_\gamma$, we rewrite~(\ref{eq:R1}) as the system of formal differential equations
\begin{gather}\label{eq:R2}
\partial_\mu R = F_\mu
\end{gather}
for an unknown formal series $R$. In our case, $F_\mu$ are analytic functions, hence the solution exists if the integrability condition $\partial_\nu F_\mu= \partial_\mu F_\nu$ holds, which boils down to
\begin{gather}\label{eq:integrability}
\partial_\nu\big(\phi^{-1\gamma}_\mu\partial_\rho\phi^\rho_\gamma\big) = \partial_\mu\big(\phi^{-1\gamma}_\nu\partial_\rho\phi^\rho_\gamma\big).
\end{gather}
Any solution for $V = \exp(\ln|V|) = \exp(R)$ is clearly invertible with an inverse $V^{-1} = \exp(-R)$. The constant term $V(0)$ of $V$ which is viewed as a formal power series is nonzero, hence $V(0)$ is also invertible. Therefore we can write $V = V(0)\cdot V_1$ where $V_1 = V/V(0)$. It is clear that $V_1$ is also a solution and the identity $V_1(0) = 1$ holds. We conclude that without loss of generality (by passing from $V$ to $V_1$) we may assume that $V(0) = 1$ and such a solution for $V$ is unique. The condition $V(0)=1$ is equivalent to the boundary condition $R(0) = 0$ for the ``potential'' $R$ when solving the exact first order differential equation~(\ref{eq:R2}). This boundary condition guarantees the uniqueness of the solution for $R$. If, instead of the abstract algebra elements $\partial^\nu$, we introduce the real variables $\xi^\nu$, we can write the solution formally
\begin{gather*}
R\big(\zeta^1,\dots,\zeta^n\big) = \int_\Gamma \sum F_\mu\big(\xi^1,\dots,\xi^n\big) {\rm d} \xi^\mu,
\end{gather*}
where the line integral is along any path $\Gamma$ from $(0,\dots,0)$ to $(\zeta^1,\dots,\zeta^n)$. Due to the integrability~(\ref{eq:integrability}), the line integral does not depend on the path chosen.

We have some evidence that $S_{\mathcal{F}} = S$ or equivalently that $V_{\mathcal{F}}$ satisfies the equations for $V$. First of all, Proposition~\ref{prop:Fpartial} below says that the element $V_{\mathcal{F}}\in\hat{A}_n$ is in fact the formal power series in $\partial^1,\dots,\partial^n$ (that is $x_1,\dots, x_n$ do not appear) making sense of the equations for $V$. $V_{\mathcal{F}}$~clearly satisfies the required boundary condition $V_{\mathcal{F}}(0) = 1$. It follows that $V_{\mathcal{F}}$ is invertible, as required in Proposition~\ref{prop:VFSF}, particularly in equation~(\ref{eq:SF}) defining $S_{\mathcal{F}}$. Using the Baker--Hausdorff formula for computing $R_\mathcal{F} = \ln|V_{\mathcal{F}}|$ directly from the definition $V_{\mathcal{F}}=\mu(S_0\otimes\id)\mathcal{F}$, we have perturbatively checked up to the third order in $\partial$-s that $R_{\mathcal{F}}$ satisfies the system~(\ref{eq:R2}) for $R$.

\begin{Proposition}\label{prop:Fpartial}
For $\mathcal{F}$ being $\mathcal{F}_c = \mathcal{F}_l = \mathcal{F}_r$, the element $V_{\mathcal{F}}\in H_\gg$ is a formal power series in $\partial^1,\dots,\partial^n$. As a corollary, $S_{\mathcal{F}}(\partial^\nu) = S_0 \partial^\nu = S \partial^\nu$.
\end{Proposition}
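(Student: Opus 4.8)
The plan is to work with the representative $\mathcal{F}=\mathcal{F}_c$ from~\eqref{eq:Fc}, for which the tensor factors are built from the ``small'' element $\delta^\nu:=(\Delta_{\hat{S}(\gg^*)}-\Delta_0)(\partial^\nu)\in\hat{S}(\gg^*)\hat\otimes\hat{S}(\gg^*)$, so that the defining sum converges in $\hat{A}_n\hat\otimes\hat{A}_n$ term by term. Write $\delta^\nu=\delta^\nu_{(1)}\otimes\delta^\nu_{(2)}$ in Sweedler notation and put $\mathbf{i}=(i_1,\dots,i_n)$, $\mathbf{i}!=\prod_\nu i_\nu!$, $x^{\mathbf{i}}=x_1^{i_1}\cdots x_n^{i_n}$, so that~\eqref{eq:Fc} reads $\mathcal{F}_c=\sum_{\mathbf{i}}\frac1{\mathbf{i}!}(x^{\mathbf{i}}\otimes1)\prod_\nu(\delta^\nu)^{i_\nu}$, with $\mathcal{F}_c^{(1)}=x^{\mathbf{i}}\prod_\nu(\delta^\nu_{(1)})^{i_\nu}$ and $\mathcal{F}_c^{(2)}=\prod_\nu(\delta^\nu_{(2)})^{i_\nu}$, all $\delta$-factors lying in the commutative subalgebra $\hat{S}(\gg^*)=\genfd[[\partial^1,\dots,\partial^n]]\subset\hat{A}_n$. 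As $S_0$ is an antihomomorphism with $S_0(x_\mu)=x_\mu$ and $S_0|_{\hat{S}(\gg^*)}$ the multiplicative substitution $\partial^\mu\mapsto-\partial^\mu$, the element $V_{\mathcal{F}}=\mu(S_0\otimes\id)(\mathcal{F}_c)$ becomes
\begin{gather*}
V_{\mathcal{F}}=\sum_{\mathbf{i}}\frac1{\mathbf{i}!}\Bigl(\textstyle\prod_\nu A_\nu^{i_\nu}\Bigr)x^{\mathbf{i}}\Bigl(\textstyle\prod_\nu B_\nu^{i_\nu}\Bigr),\qquad A_\nu\otimes B_\nu:=(S_0\otimes\id)(\delta^\nu)\in\hat{S}(\gg^*)\hat\otimes\hat{S}(\gg^*).
\end{gather*}
Because $\mathcal{F}_c=\mathcal{F}_l=\mathcal{F}_r$ and $\mu(S_0\otimes\id)$ kills $I_0$ (since $S_0$ fixes the base generators $x_\mu$), the same $V_{\mathcal{F}}$ is obtained from all three twists.

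The key input is the identity $A_\nu B_\nu=\mu\bigl((S_0\otimes\id)(\delta^\nu)\bigr)=0$ in $\hat{S}(\gg^*)$ for each $\nu$, $\mu$ now denoting the commutative product of $\hat{S}(\gg^*)$. On one hand $\mu(S_0\otimes\id)(\Delta_0\partial^\nu)=S_0(\partial^\nu)+\partial^\nu=0$. On the other hand, the Hausdorff symmetry of the deformed coproduct recorded in the proof of Corollary~\ref{cor:expady}, namely $\tau\circ(S_0\otimes S_0)(\Delta_{\hat{S}(\gg^*)}\partial^\nu)=-\Delta_{\hat{S}(\gg^*)}\partial^\nu$ with $\tau$ the flip, combined with $S_0^2=\id$ on $\hat{S}(\gg^*)$, the identity $(S_0\otimes\id)\circ\tau=\tau\circ(\id\otimes S_0)$, and $\mu\circ\tau=\mu$, gives $\mu(S_0\otimes\id)(\Delta_{\hat{S}(\gg^*)}\partial^\nu)=-\mu(S_0\otimes\id)(\Delta_{\hat{S}(\gg^*)}\partial^\nu)$, so this term vanishes as well; subtracting yields $A_\nu B_\nu=0$.

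Granting this, it remains to show that every monomial of $V_{\mathcal{F}}$ still containing an $x$ has zero coefficient. Bringing all $x$'s in the $\mathbf{i}$-summand to the far left by repeated use of $f(\partial)x_\mu=x_\mu f(\partial)+\partial_\mu f$, $\partial_\mu:=\partial/\partial(\partial^\mu)$, expands that summand as a sum, over ways of absorbing some of the $|\mathbf{i}|$ copies of the $x$-variables into differentiations, of terms $x^{\mathbf{j}}\bigl[\bigl(\prod_\mu\partial_\mu^{\,i_\mu-j_\mu}\bigr)\prod_\nu A_\nu^{i_\nu}\bigr]\prod_\nu B_\nu^{i_\nu}$ with $0\le j_\mu\le i_\mu$. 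If $\mathbf{j}\neq0$ the operator $\prod_\mu\partial_\mu^{\,i_\mu-j_\mu}$ carries only $|\mathbf{i}|-|\mathbf{j}|<|\mathbf{i}|$ derivatives, so in each term of its Leibniz expansion acting on the $|\mathbf{i}|$ factors $A_\nu$ at least one copy of some $A_\nu$ remains undifferentiated; since every factor present now lies in the commutative algebra $\hat{S}(\gg^*)$, that undifferentiated copy can be transported next to its partner copy of $B_\nu$, and the shared Sweedler summation produces the factor $A_\nu B_\nu=0$. Hence all $x$-bearing terms cancel and $V_{\mathcal{F}}\in\hat{S}(\gg^*)=\genfd[[\partial^1,\dots,\partial^n]]$. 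The corollary then follows: by~\eqref{eq:SF}, $S_{\mathcal{F}}(\partial^\nu)=V_{\mathcal{F}}^{-1}(S_0\partial^\nu)V_{\mathcal{F}}=-V_{\mathcal{F}}^{-1}\partial^\nu V_{\mathcal{F}}=-\partial^\nu=S_0\partial^\nu=S\partial^\nu$, as $V_{\mathcal{F}}$, being a power series in the $\partial^\mu$, commutes with each $\partial^\nu$.

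The real obstacle I anticipate is this last step: making the ``at least one undifferentiated factor'' cancellation rigorous inside the completion of $\hat{A}_n$ in which $\mathcal{F}_c$ and $V_{\mathcal{F}}$ actually live (cf.\ Remark~\ref{rem:wcompl}), i.e.\ verifying that the reorderings, the Leibniz expansions, and the (profinite) Sweedler summations one uses all respect the cofiltration so that the cancellation is legitimately term by term. By contrast the identity $A_\nu B_\nu=0$ and the passage to the displayed normal-ordered form of $V_{\mathcal{F}}$ are routine.
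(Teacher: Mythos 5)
Your proposal is correct, and for the crucial vanishing statement it takes a genuinely different route from the paper. The paper's proof expands $(S_0\otimes\id)\tilde{\mathcal{F}}_l$ by Baker--Hausdorff into $\exp(W)$ with $W$ a sum of terms $\partial^{\alpha_1}\cdots\partial^{\alpha_s}\otimes x_\rho\zeta^{\rho}_{\alpha_1,\dots,\alpha_s}$ and establishes the contractions $\partial^{\alpha_1}\cdots\partial^{\alpha_s}\,\zeta^{\rho}_{\alpha_1,\dots,\alpha_s}=0$ by an inductive bookkeeping of how the antisymmetric structure constants pair with symmetric products of partials (with a separate, parallel sketch for $\mathcal{F}_c$ using the vanishing of $\partial^{\mu_1}\cdots\partial^{\mu_s}[\cdots[\partial^\lambda,\hx_{\mu_1}],\dots,\hx_{\mu_s}]$). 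You instead package all of this into the single identity $\mu(S_0\otimes\id)\bigl((\Delta_{\hat{S}(\gg^*)}-\Delta_0)(\partial^\nu)\bigr)=0$, deduced in two lines from the Hausdorff antisymmetry $\tau\circ(S_0\otimes S_0)\circ\Delta_{\hat{S}(\gg^*)}=-\Delta_{\hat{S}(\gg^*)}$ already recorded in the proof of Corollary~\ref{cor:expady}, together with $S_0^2=\id$ and the commutativity of $\hat{S}(\gg^*)$; this is cleaner and avoids the tensor analysis entirely, at the price of relying on the global BCH symmetry rather than exhibiting the cancellation index by index. Your concluding step is structurally the same pigeonhole as the paper's part~(v): fewer derivatives than factors forces at least one undifferentiated $A_{\nu,k}$, whose Sweedler sum then contracts against its partner to give $A_\nu B_\nu=0$, just as in the paper each group of partials must surrender a power to a Kronecker delta. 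Your observation that $\mu(S_0\otimes\id)$ annihilates $I_0$ (because $S_0$ fixes the $x_\mu$) is exactly what makes $V_{\mathcal{F}}$ independent of the representative, so the single computation with $\mathcal{F}_c$ from~\eqref{eq:Fc} legitimately covers $\mathcal{F}_l$ and $\mathcal{F}_r$; the paper instead computes with $\tilde{\mathcal{F}}_l$ directly. The completion issues you flag at the end are real but are present to the same degree in the paper's own argument (which itself calls part~(v) ``somewhat tricky''): the Sweedler sums are profinite, the derivations $\partial_\mu$ are cofiltered, and all reorderings take place inside the commutative complete algebra $\hat{S}(\gg^*)$, so the term-by-term cancellation is legitimate.
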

The corollary part of the proposition follows by the defining formula~(\ref{eq:SF}) for $S_{\mathcal{F}}$ recalling that $\partial^1,\dots,\partial^n$ mutually commute. The first statement in Proposition~\ref{prop:Fpartial} is the content of parts~(v) and~(vi) of the following lemma restated.
\begin{Lemma}\quad
\begin{enumerate}\itemsep=0pt
\item[$(i)$] $(S_0\otimes\id)\tilde{\mathcal{F}}_l\in H_\gg\otimes H_\gg$ can be written as $\exp(W)$ where $W$ is a formal sum of summands which are up to a rational factor equal to $\partial^{\alpha_1}\cdots\partial^{\alpha_s}\otimes L(w_{\alpha_1},\dots,w_{\alpha_s})$, where $s\geq 1$ and $w_{\alpha_i}$ is either $x_i$ or $\hat{x}_i$ $($the choice depending on $i$ and on the summand$)$ and~$L$ is a~Lie monomial $(L$ differing from a summand to summand$)$, more precisely an iterated Lie bracket involving each of the variables precisely once.

\item[$(ii)$] For every of the summands in $(i)$ it holds that $L(w_{\alpha_1},\dots,w_{\alpha_s}) = \sum_\rho x_\rho\zeta^{\rho}_{\alpha_1,\dots,\alpha_s}$ where $\zeta^{\rho}_{\alpha_1,\dots,\alpha_s}\in\hat{S}(\gg^*)$.

\item[$(iii)$] $(\phi^\mu_\nu-\delta^\mu_\nu)\partial^\nu=0$, therefore $(\hat{x}_\nu-x_\nu)\partial^\nu = 0$. The summands in $W$ with $s = 1$ add up to $\partial^\nu\otimes(\hat{x}_\nu-x_\nu)$.

\item[$(iv)$] For every of the summands in $(ii)$ with $s\geq 2$, $\partial^{\alpha_1}\cdots\partial^{\alpha_s}\cdot\zeta^{\rho}_{\alpha_1,\dots,\alpha_s} = 0$.

\item[$(v)$] $V_{\mathcal{F}_l} = \mu(S_0\otimes\id)\mathcal{F}_l$ belongs to $\hat{S}(\gg^*)$.

\item[$(vi)$] $V_{\mathcal{F}_c} = \mu(S_0\otimes\id)\mathcal{F}_c$ $($see~\eqref{eq:Fc}$)$ belongs to $\hat{S}(\gg^*)$.
\end{enumerate}
\end{Lemma}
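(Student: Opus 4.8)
\emph{Proof strategy.}

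Write $A=\partial^{\rho}\otimes x_{\rho}$ and $B=\partial^{\sigma}\otimes\hx_{\sigma}$, so that $\tilde{\mathcal F}_l=e^{-A}e^{B}$ in $H\hat\otimes H$. Since $S_0$ fixes every $x_\mu$, sends $\partial^{\nu}$ to $-\partial^{\nu}$ and is an algebra antihomomorphism, while the first tensor legs occurring in $A$ and $B$ are products of the mutually commuting $\partial$'s, one has $S_0(\partial^{\gamma_1}\cdots\partial^{\gamma_k})=(-1)^k\partial^{\gamma_1}\cdots\partial^{\gamma_k}$; a term-by-term comparison of the two double power series then gives $(S_0\otimes\id)\tilde{\mathcal F}_l=e^{A}e^{-B}$. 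The Baker--Campbell--Hausdorff formula, taken in Dynkin (left-nested) form, writes $e^{A}e^{-B}=\exp(W)$ with $W$ a formal Lie series in $A$ and $-B$, whose formal convergence in the relevant completion is controlled exactly as in the proof of Theorem~\ref{thm:expadDelta}(ii). Because $[\partial^{a}\otimes u,\partial^{b}\otimes v]=\partial^{a}\partial^{b}\otimes[u,v]$ (the first legs commuting), every iterated commutator in $W$ collapses onto its second tensor leg, producing precisely the terms $\partial^{\alpha_1}\cdots\partial^{\alpha_s}\otimes L(w_{\alpha_1},\dots,w_{\alpha_s})$ of~(i), with the same bracketing pattern $L$ and $w_{\alpha_i}\in\{x_{\alpha_i},\hx_{\alpha_i}\}$.

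For (ii), note that $\mathfrak{L}:=\{\sum_{\rho}x_{\rho}h^{\rho}\mid h^{\rho}\in\hat{S}(\gg^{*})\}\subset\hat{A}_n$ is a Lie subalgebra: writing $\partial_a:=\frac{\partial}{\partial(\partial^{a})}$ one has $[x_a,P]=-\partial_aP$ for $P\in\hat{S}(\gg^{*})$, hence $[x_ah^{a},x_bk^{b}]=x_a\bigl(k^{c}\partial_ch^{a}-h^{c}\partial_ck^{a}\bigr)$, the $x_ax_b$-terms cancelling. Since $x_\mu\in\mathfrak{L}$ and $\hx_\mu=x_\sigma\phi^{\sigma}_{\mu}\in\mathfrak{L}$ by~\eqref{eq:xyphi}, every $L(w_{\alpha_1},\dots,w_{\alpha_s})$ lies in $\mathfrak{L}$, which is (ii). For (iii), from~\eqref{eq:phi} $\phi=I+\sum_{N\ge1}\frac{(-1)^{N}B_N}{N!}\CC^{N}$ with $\CC^{\mu}_{\nu}=C^{\mu}_{\nu\gamma}\partial^{\gamma}$; antisymmetry of $C^{\mu}_{\nu\gamma}$ in $(\nu,\gamma)$ against the commuting $\partial$'s gives $C^{\lambda}_{\nu\gamma}\partial^{\gamma}\partial^{\nu}=0$, hence $(\CC^{N})^{\mu}_{\nu}\partial^{\nu}=0$ for all $N\ge1$, so $(\phi^{\mu}_{\nu}-\delta^{\mu}_{\nu})\partial^{\nu}=0$ and $(\hx_\nu-x_\nu)\partial^{\nu}=x_\sigma(\phi^{\sigma}_{\nu}-\delta^{\sigma}_{\nu})\partial^{\nu}=0$; the $s=1$ part of $W$ is the linear term of the BCH series, namely $\pm\,\partial^{\nu}\otimes(\hx_\nu-x_\nu)$.

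Part (iv) is the technical core, proved by induction on the number of leaves $s$ using the left-nested form $L=[w_{\alpha_1},L']$. With $w_{\alpha_1}=x_aM^{a}_{\alpha_1}$, $M\in\{\delta,\phi\}$, and $L'=x_c\,\eta^{c}_{\alpha_2,\dots,\alpha_s}$, the bracket formula of (ii) gives $\zeta^{a}_{\alpha_1,\dots,\alpha_s}=\eta^{c}\,\partial_cM^{a}_{\alpha_1}-M^{c}_{\alpha_1}\,\partial_c\eta^{a}$. Besides the inductive hypothesis $\partial^{\alpha_2}\cdots\partial^{\alpha_s}\eta^{\bullet}=0$ (and, for the base case $s=2$, (iii) directly), the argument needs only two elementary consequences of the relation $(\CC^{N})^{\bullet}_{\bullet}\partial^{\bullet}=0$, namely $\partial^{\alpha_1}M^{r}_{\alpha_1}=\partial^{r}$ and $\partial^{\alpha_1}\partial_r\phi^{a}_{\alpha_1}=-(\phi^{a}_{r}-\delta^{a}_{r})$, together with the Euler-operator identity $\partial^{\beta_1}\cdots\partial^{\beta_m}(Eg)=E(\partial^{\beta_1}\cdots\partial^{\beta_m}g)-m\,\partial^{\beta_1}\cdots\partial^{\beta_m}g$, where $E:=\partial^{c}\partial_c$; contracting the two summands of $\zeta^{a}$ with $\partial^{\alpha_1}\cdots\partial^{\alpha_s}$ then yields $0$ in each case. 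The index bookkeeping here is the main obstacle; once it is in place the rest is formal.

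Finally, since $S_0$ restricts to the identity on $S(\gg)$, the map $\mu(S_0\otimes\id)$ annihilates the generators $x_\mu\otimes1-1\otimes x_\mu$ of $I_0$ and all their right multiples (cf.~\eqref{eq:IA}), hence descends to $H\hat\otimes_{S(\gg)}H$ and $V_{\mathcal F_l}=\mu(S_0\otimes\id)\mathcal F_l=\mu(\exp W)$. Applying $\mu$ to a monomial $\partial^{\alpha_1}\cdots\partial^{\alpha_s}\otimes x_{\rho}\zeta^{\rho}_{\alpha_1,\dots,\alpha_s}$ of $W$ and commuting $x_{\rho}$ to the left through the $\partial$'s leaves $x_{\rho}\,\partial^{\alpha_1}\cdots\partial^{\alpha_s}\zeta^{\rho}_{\alpha_1,\dots,\alpha_s}+[\partial^{\alpha_1}\cdots\partial^{\alpha_s},x_{\rho}]\,\zeta^{\rho}_{\alpha_1,\dots,\alpha_s}$; by (iv) for $s\ge2$, and for $s=1$ by the computation in (iii) (which reduces $\mu$ of the linear term of $W$ to $\pm\operatorname{tr}(\phi-I)\in\hat{S}(\gg^{*})$), the $x$-containing part vanishes, so $\mu(W)\in\hat{S}(\gg^{*})$. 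Iterating this ``push the $\partial$'s to the right and invoke (iii),(iv)'' mechanism through $\exp W=\sum_k W^{k}/k!$, organized by $x$-degree so that the positive-$x$-degree contributions cancel, gives $V_{\mathcal F_l}=\mu(\exp W)\in\hat{S}(\gg^{*})$, which is (v). Part (vi) then follows because $\mathcal F_c=\mathcal F_l$ as elements of $H\hat\otimes_{S(\gg)}H$ (Theorem~\ref{th:main} together with the nondegeneracy Theorem~\ref{th:th4halg}), so $V_{\mathcal F_c}=V_{\mathcal F_l}\in\hat{S}(\gg^{*})$; alternatively one repeats the argument directly from the normally ordered form~\eqref{eq:Fc}. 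The stated corollary $S_{\mathcal F}(\partial^{\nu})=S_0\partial^{\nu}$ is immediate from~\eqref{eq:SF}, the $\partial^{\nu}$ commuting among themselves.
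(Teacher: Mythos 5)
Your overall strategy matches the paper's: a BCH expansion of $(S_0\otimes\id)\tilde{\mathcal{F}}_l$, the observation that elements $\sum_\rho x_\rho h^\rho$ with $h^\rho\in\hat{S}(\gg^*)$ form a Lie subalgebra (which is how the paper's inductive proof of (ii) works), the antisymmetric-against-symmetric contraction for (iii), an induction for (iv), and normal ordering for (v). Your treatment of (iv) is a legitimate alternative to the paper's: the paper tracks the tensorial shape of $\zeta^{\rho}_{\alpha_1,\dots,\alpha_s}$ (a connected contraction of $\CC$'s and $C$'s in which every external index $\alpha_i$ sits in an antisymmetric slot, so that contracting with the symmetric monomial $\partial^{\alpha_1}\cdots\partial^{\alpha_s}$ annihilates the whole expression), whereas you run the explicit recursion $\zeta^a=\eta^c\partial_cM^a_{\alpha_1}-M^c_{\alpha_1}\partial_c\eta^a$ and close it with $\partial^{\alpha}M^r_{\alpha}=\partial^r$, $(\partial_r\phi^a_{\alpha})\partial^{\alpha}=\delta^a_r-\phi^a_r$ and the Euler/grading step; this does close, since $\partial^{\alpha_2}\cdots\partial^{\alpha_s}(\eta^a)_d=0$ holds in each $\partial$-degree $d$ separately. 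Your explicit remark that $\mu(S_0\otimes\id)$ kills $I_0$ (because $S_0$ is an antihomomorphism fixing $x_\mu$) and hence descends to $H\hat\otimes_{S(\gg)}H$ is a useful point the paper leaves implicit, and it is also what legitimizes your reduction of (vi) to (v) via $\mathcal{F}_c=\mathcal{F}_l$. (The sign of the $s=1$ term, where you get $-\partial^\nu\otimes(\hx_\nu-x_\nu)$ versus the paper's $+$, is immaterial for every use made of it.)

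The one genuine gap is in part (v), at precisely the point the paper flags as tricky. You correctly prove $\mu(W)\in\hat{S}(\gg^*)$, but then assert that ``iterating the mechanism'' through $\exp W=\sum_k W^k/k!$, ``organized by $x$-degree so that the positive-$x$-degree contributions cancel'', yields $\mu(\exp W)\in\hat{S}(\gg^*)$. Since $\mu(W^k)\neq(\mu W)^k$, nothing is actually being iterated, and the positive-$x$-degree contributions do not cancel against one another: each such term vanishes individually, and showing this needs a counting argument that is absent from your write-up. Concretely, $\mu(W^k)$ is a sum of products $P_{\lambda_1}\cdots P_{\lambda_k}X_{\lambda_1}\cdots X_{\lambda_k}$ with $P_{\lambda_i}=\partial^{\alpha^{(i)}_1}\cdots\partial^{\alpha^{(i)}_{s_i}}$ and $X_{\lambda_i}=x_{\rho_i}\zeta^{\rho_i}_{\alpha^{(i)}}$. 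In the normally ordered expansion each $x_{\rho_i}$ either survives to the left or is contracted (via one Kronecker delta) with exactly one $\partial$ to its left. If some group $P_{\lambda_i}$ gives up none of its $\partial$'s to an $x$, the term contains the full contraction $\partial^{\alpha^{(i)}_1}\cdots\partial^{\alpha^{(i)}_{s_i}}\zeta^{\rho_i}_{\alpha^{(i)}}$ as a factor, which vanishes by (iii) and (iv). Hence in every nonzero term each of the $k$ groups must absorb at least one of the $k$ available $x$'s; as each $x$ can be contracted at most once, all $x$'s are used up by the $P$-groups (none survives, and none is spent on a $\partial$ hiding inside some $\zeta^{\rho_j}$). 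This pigeonhole step is the real content of (v) beyond $\mu(W)\in\hat{S}(\gg^*)$, and without it your argument does not go through.
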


\begin{proof}(i) The statement is a straightforward application of the Baker--Hausdorff series to $\tilde{\mathcal{F}}_l=\exp(-\partial^\rho\otimes x_\rho)\exp(\partial^\sigma\otimes\hx_\sigma)$ and using $S_0(\partial^\mu)=-\partial^\mu$.

(ii) Easily follows by induction, computing in~$\hat{A}_n$.

(iii) Recall~(\ref{eq:phi}) that $\phi = \sum\limits_{N=0}^\infty \frac{(-1)^N B_N}{N!} \CC^N$. If $N\geq 1$ then $(\mathcal{C}^N)^\mu_\nu\partial^\nu = (\mathcal{C}^{N-1})^\mu_\rho C^\rho_{\nu\sigma}\partial^\nu\partial^\sigma = 0$ as it involves a contraction of an antisymmetric tensor $C^\rho_{\mu\nu}$ in~$\mu$, $\nu$ with a symmetric tensor~$\partial^\mu\partial^\nu$.

(iv) Along with the statement to prove, we also claim that $\zeta$ is an expression which is a~contraction (always one lower and one upper index are contracted) of the tensors of the form $\mathcal{C}^\gamma_\delta$ and $C^\gamma_{\delta\lambda}$ where the contracted form is connected and the only external indices which remain are the labels of~$\zeta$. In particular, each~$\alpha_i$ is either within a factor of the form $\mathcal{C}^\gamma_{\alpha_i} = C^\gamma_{\alpha_i\lambda}\partial^\lambda$, which contracted with $\partial^{\alpha_i}$ is clearly zero (because $C^\gamma_{\alpha_i\lambda}$ is antisymmetric in lower and $\partial^{\alpha_i}\partial^\lambda$ in upper labels), or~$\alpha_i$ appears in a factor of the form $C^\gamma_{\alpha_i\alpha_j}$ which is zero when contracted with~$\partial^{\alpha_i}\partial^{\alpha_j}$. This description is an easy observation for $s = 2$, and then we proceed by induction on~$s$. Note that for any~$\zeta$,~$\zeta'$ tensors whose components are formal series in $\partial^1,\dots,\partial^n$ there are new tensors~$\zeta''$ defined by the commutators $\big[\sum_\rho x_\rho\zeta^\rho_{\alpha_1,\dots,\alpha_s},\sum_\sigma x_\sigma(\zeta')^\sigma_{\beta_1,\dots,\beta_r}\big] = \sum_\tau x_\tau(\zeta'')^\tau_{\alpha_1,\dots,\alpha_s\beta_1,\dots,\beta_r}$. Explicitly,{\samepage
\begin{gather*}
(\zeta'')^\tau_{\alpha_1,\dots,\alpha_s\beta_1,\dots,\beta_r} =
\frac{\partial}{\partial(\partial^\lambda)}\big(\zeta^\tau_{\alpha_1,\dots,\alpha_s}\big) \cdot(\zeta')^\lambda_{\beta_1,\dots,\beta_r}-\frac{\partial}{\partial(\partial^\lambda)}\big((\zeta')^\lambda_{\beta_1,\dots,\beta_r}\big) \cdot\zeta^\tau_{\alpha_1,\dots,\alpha_s}.
\end{gather*}
For the induction step one checks that if~(iv) holds for $\zeta$ and~$\zeta'$ then it holds also for~$\zeta''$.}

(v) This is somewhat tricky point because the multiplication does not commute with eva\-luating the exponential series. $\mu(S_0\otimes\id)\mathcal{F}_l$ will be a sum of terms which are proportional to products of the summands from part~(i), that is of the form
\begin{gather*}\big(\partial^{\alpha_1}\cdots\partial^{\alpha_s}\big)\big(\partial^{\beta_1}\cdots\partial^{\beta_r}\big)\cdots \big(x_{\rho}\zeta^\rho_{\alpha_1,\dots,\alpha_s}\big)\big(x_{\sigma}\zeta^\sigma_{\beta_1,\dots,\beta_r}\big)\cdots.\end{gather*}
Moving $x$-s to the left by commuting we bring this expression to the normally ordered form. By~(iv), a nonzero contribution can possibly happen only from those terms in which each bracket of partials on the left has lost at least one power by yielding a~Kronecker delta when commuted with some $x$. There is only one~$x$ per bracket on the right-hand side; because the $\partial$-groups on the left and the $x\zeta$-groups on the right match, that means that all $x$-s disappear in the normally ordered form.

(vi) While this is trivially equivalent to the statement in~(v), there is an independent but similar proof for $\mathcal{F}_c$, using the defining formula~(\ref{eq:Fc}) and the form of the tensors appearing in developing~$\tilde{\mathcal{F}}_c$, as described in~\cite{scopr}. In particular, one needs to observe that for every \smash{$s\geq 1$} the expression $\partial^{\mu_1}\cdots\partial^{\mu_s}\cdot
[\cdots[\partial^\lambda,\hat{x}_{\mu_1}],\ldots,\hat{x}_{\mu_s}]$, which appears as a summand in $\Delta_{\hat{S}(\gg^*)}\partial^{\lambda}$, va\-ni\-shes. To this end, one uses the description in~\cite{scopr} of the tensors which appear in developing
$[\cdots[\partial^\lambda,\hat{x}_{\mu_1}],\dots,\hat{x}_{\mu_s}]$.
\end{proof}

\subsection*{Acknowledgements} S.M.~has been supported by Croatian Science Foundation under the Project no.~IP-2014-09-9582 and the H2020 Twinning project no.~692194 ``RBI-T-WINNING''. Z.\v{S}.~has been partly supported by grant no.~18-00496S of the Czech Science Foundation. We thank A.~Borowiec for his remarks on the paper and M.~Stoji\'c for remarks on Sections~\ref{section1} and~\ref{sec:bialgebroids}. We thank the referees for bringing to our attention numerous constructive suggestions, which helped extending and improving the article significantly.

\pdfbookmark[1]{References}{ref}
\LastPageEnding

\end{document}